\newtheorem*{thmm}{Th\'eor\`eme}
\newtheorem{thm}{Th\'eor\`eme}[subsection]
\newtheorem{cor}[thm]{Corolaire} % corolaire est correct (orthographe 1990)
\newtheorem{lem}[thm]{Lemme}
\newtheorem{propp}[thm]{Proposition}
\newtheorem{fact}[thm]{Fait}
\theoremstyle{definition}
\newtheorem{defn}[thm]{D\'efinition}
\newtheorem{exe}[thm]{Exemple}
\numberwithin{equation}{section}
\newcommand{\N}{\mathbf{N}}
\newcommand{\Z}{\mathbf{Z}}
\newcommand{\R}{\mathbf{R}}
\newcommand{\GL}{\textnormal{GL}}
\newcommand{\TM}{\textnormal{TM}}
\newcommand{\Ker}{\textnormal{Ker}}
\newcommand{\Homeo}{\textnormal{Homeo}}
\newcommand{\eps}{\varepsilon}
\newcommand{\lb}{[\![}
\newcommand{\rb}{]\!]}
\newcommand{\os}{_\textnormal{s}}
\newcommand{\Clo}{\textnormal{Clo}}
\newcommand{\vpi}{\varphi}
\newcommand{\Alt}{\mathfrak{A}}
\newcommand{\Supp}{\textnormal{Supp}}
\newcommand{\rt}{\rightthreetimes}
\newcommand{\mo}{\textnormal{mod}}
\newcommand{\Hom}{\textnormal{Hom}}
\newcommand{\Cyl}{\textnormal{Cyl}}
\date{Janvier 2013}
\title{Groupes pleins-topologiques}
\author{Yves de CORNULIER}
\address{CNRS -- Universit\'e Paris-Sud 11\\
D\'epartement de Math\'ematiques\\
UMR 8628 du CNRS\\
B\^atiment 425\\
91405 Orsay Cedex}
\email{yves.cornulier@math.u-psud.fr}
\begin{document}
\maketitle

\section{Introduction}

Le but de cet expos\'e est de d\'ecrire la construction r\'ecente et \'el\'egante de groupes infinis de type fini, simples et moyennables. 
Elle d\'ecoule du th\'eor\`eme ci-dessous. On appelle {\em espace de Cantor} un espace hom\'eo\-morphe \`a l'ensemble triadique de Cantor. 
Soit $\vpi$ un auto\-hom\'eo\-mor\-phisme d'un espace de Cantor $X$, c'est-\`a-dire un hom\'eo\-mor\-phisme de $X$ sur lui-m\^eme. On suppose $\vpi$ minimal, au sens o\`u toutes ses orbites sont denses. Par d\'efinition, son groupe {\em plein-topologique} $\lb\vpi\rb$, 
est le groupe (discret!) des auto\-hom\'eo\-mor\-phismes de $X$ qui co\"\i ncident localement avec une puissance de $\vpi$ (voir au \S\ref{defpl} une d\'efinition plus g\'en\'erale et plus d\'etaill\'ee).

\begin{thmm}
Le groupe plein-topologique $\lb\vpi\rb$ est infini, ainsi que son sous-groupe d\'eriv\'e $\lb\vpi\rb'$; de plus
\begin{itemize}
\item {\rm  (Matui, 2006)} le groupe d\'eriv\'e $\lb\vpi\rb'$ est un groupe simple;
\item {\rm (Matui, 2006)} si $\vpi$ est un sous-d\'ecalage topologique (voir \S\ref{sdo}), alors le groupe d\'eriv\'e $\lb\vpi\rb'$ est un groupe de type fini;
\item {\rm (Juschenko--Monod, 2012; conjectur\'e par Grigorchuk--Medynets \cite{GrMe})} le groupe $\lb\vpi\rb$ est moyennable (et donc $\lb\vpi\rb'$ aussi).
\end{itemize}
\end{thmm}

Le groupe plein-topologique $\lb\vpi\rb$ est un cas particulier d'une 
construction de W. Krieger (1980) \cite[p.~88]{Kri} mais n'y est pas 
\'etudi\'e dans le cas d'une action d'un auto\-hom\'eo\-mor\-phisme 
minimal. Il est ensuite\footnote{{\it (Ajout, f\'evrier 2020)} \v St\v 
ep\' anek et Rubin (1989) \cite{SR} ont \'egalement introduit le groupe 
plein-topologique d'un sous-groupe d'autohom\'eomorphismes d'un espace 
compact totalement s\'epar\'e: voir l'appendice~\ref{ajout}.} 
implicitement introduit, dissimul\'e sous le nom {\og$\Gamma$\fg}, en 
termes de $C^\ast$-alg\`ebres par Putnam (1989) \cite[Sec.~5]{Pu}, en 
\'etudiant dans un cadre topologique des objets introduits par Vershik 
\cite{Ve1,Ve2} dans un contexte mesurable. Le groupe plein-topologique 
d'un auto\-hom\'eo\-mor\-phisme minimal est ensuite \'etudi\'e de 
mani\`ere syst\'ematique par E.~Glasner et B.~Weiss \cite{GW} (sous le 
nom {\og groupe plein-fini\fg}), Skau \cite{Sk} et surtout Giordano, 
Putnam et Skau (1999) dans \cite{GPS} (et d\'ej\`a, de mani\`ere 
implicite mais profonde, dans l'article important \cite{GPS95}). Ces 
derniers montrent notamment, en utilisant un r\'esultat de Boyle 
\cite{Bo}, que la classe d'isomorphie du groupe $\lb\vpi\rb$ 
caract\'erise la paire $\{\vpi,\vpi^{-1}\}$ \`a conjugaison pr\`es dans 
le groupe des auto\-hom\'eo\-mor\-phismes de l'espace de Cantor. Ceci a 
\'et\'e \'etendu \`a $\lb\vpi\rb'$ par Bezuglyi et Medynets \cite{BeM}, 
avec une m\'ethode plus directe. De l'existence d'un continuum 
d'auto\-hom\'eo\-mor\-phismes minimaux de l'espace de Cantor deux \`a 
deux non conjugu\'es et qui sont des sous-d\'ecalages (par exemple, 
associ\'es aux rotations irrationnelles, voir l'exemple plus bas), il 
d\'ecoule que les groupes simples de type fini obtenus $\lb\vpi\rb'$ 
forment \'egalement un continuum de classes d'isomorphie.

\subsection*{Moyennabilit\'e}
Un groupe $\Gamma$ est {\em moyennable} s'il poss\`ede une moyenne (c'est-\`a-dire une probabilit\'e finiment additive d\'efinie sur toutes les parties) invariante par translations \`a gauche.
Un des int\'er\^ets de la moyennabilit\'e est qu'elle peut aussi bien \^etre caract\'eris\'ee de mani\`ere analytique, probabiliste, dynamique et g\'eom\'etrique. \`a titre d'illustration, en voici diverses caract\'erisations \'equivalentes (qu'il n'est pas n\'ecessaire de d\'echiffrer pour comprendre la suite; voir notamment \cite{Gre,Eym,Pat}). 
\begin{itemize}
\item $\Gamma$ admet des parties presque invariantes: pour toute partie finie $S\subset\Gamma$ et $\eps>0$ il existe une partie finie non vide $F$ de $\Gamma$ telle que $\#(SF\smallsetminus F)/\#F\le\eps$.
\item Toute action de $\Gamma$ par auto\-hom\'eo\-mor\-phismes sur un espace compact non vide pr\'eserve une mesure de probabilit\'e invariante sur les bor\'eliens.
\item Toute action affine continue de $\Gamma$ sur un convexe compact non vide d'un espace localement convexe admet un point fixe.
\item $\Gamma$ n'admet pas de d\'ecomposition paradoxale (voir remarque \ref{parado}).
\item La repr\'esentation r\'eguli\`ere de $\Gamma$ sur $\ell^2(\Gamma)$ admet des vecteurs unitaires presque invariants. 
\item L'alg\`ebre de Banach $\ell^1(\Gamma)$ est nucl\'eaire.
\item L'alg\`ebre de von Neumann de $\Gamma$ est hyperfinie.
\item Le morphisme canonique de la $C^\ast$-alg\`ebre maximale de $\Gamma$ vers sa $C^\ast$-alg\`ebre r\'eduite est un isomorphisme.
\item Pour toute mesure de probabilit\'e sym\'etrique \`a support fini $S\subset \Gamma$ contenant 1, la marche al\'eatoire donn\'ee au temps $n$ par multiplication \`a droite par un \'el\'ement choisi au hasard dans $S$, a une probabilit\'e de retour en 1 d\'ecroissant sous-expo\-nen\-tiellement vers 0.
\end{itemize}

Les groupes moyennables constituent une classe stable par passage aux sous-groupes, quotients, extensions et limites directes. La plus petite classe de groupe stable par ces op\'erations contenant les groupes finis et ab\'eliens est appel\'ee classe des groupes {\em \'el\'ementairement moyennables}. De fait, ce sont les exemples {\og imm\'ediats\fg}  de groupes moyennables, comprenant notamment les groupes virtuellement r\'eso\-lubles (c'est-\`a-dire, ayant un sous-groupe d'indice fini r\'esoluble). Cependant, il est facile de v\'erifier que tout groupe de type fini \'el\'ementairement moyennable et non trivial admet un quotient fini non trivial (et, s'il est infini, admet un quotient infini virtuellement ab\'elien non trivial), si bien que ces groupes sont tr\`es loin d'\^etre simples. S'il est connu depuis le d\'ebut de l'\'etude de la croissance des groupes que tout groupe de croissance sous-exponentielle est moyennable, les premiers exemples non triviaux (\`a croissance dite interm\'ediaire) ont \'et\'e obtenus par Grigorchuk en 1984 \cite{Gri}. Ce sont les premiers exemples connus de groupes moyennables et non \'el\'ementairement moyennables. Ce sont des groupes d'automorphismes d'arbres enracin\'es de degr\'e fini et \`a ce titre ils sont r\'esiduellement finis (et donc loin d'\^etre simples). La plus petite classe de groupes stable par les op\'erations d\'ecrites pr\'ec\'edemment contenant les groupes \`a croissance sous-exponentielle est appel\'ee classe des groupes {\em sous-exponentiellement moyennables}; elle contient les groupes \'el\'ementairement moyennables mais est strictement plus grosse puisqu'elle contient \'egale\-ment les groupes \`a croissance interm\'ediaire. Les premiers exemples de groupes moyennables mais pas sous-exponentiellement moyennables, comprenant le {\og groupe de la basilique\fg}, ont \'et\'e introduits par Grigorchuk et Zuk \cite{GZ}; leur moyennabilit\'e a \'et\'e d\'emontr\'ee par Bartholdi et Virag \cite{BV}, en utilisant les marches al\'eatoires; ils sont \'egale\-ment r\'esiduellement finis. Leur m\'ethode a \'et\'e largement g\'en\'eralis\'ee et \'etendue \cite{Kai,Ers,Bri,BKN}, par des m\'ethodes \'egalement bas\'ees sur les marches al\'ea\-toires et s'appliquant \`a des groupes agissant naturellement sur des arbres enracin\'es de valence finie.

On ne connaissait pas, pr\'ec\'edemment, de groupe de type fini moyennable non trivial sans quotient fini non trivial, et donc, a fortiori, aucun qui soit infini et simple. Les exemples $\lb\vpi\rb'$ sont \`a croissance exponentielle, par un r\'esultat de Matui \cite{Ma2}, et il en d\'ecoule imm\'ediatement qu'ils ne sont pas sous-exponentiellement moyennables (voir le fait \ref{factsim}).

Le crit\`ere de moyennabilit\'e utilis\'e par Juschenko et Monod \cite{JM} est le suivant. Rappelons qu'une permutation $\sigma$ de $\Z$ est \`a {\em d\'eplacement born\'e} si $|\sigma(n)-n|$ est born\'e ind\'ependamment de $n\in\Z$, et que deux parties $A,B\subset\Z$ sont {\em commensurables} si la diff\'erence sym\'etrique $A\bigtriangleup B$ est finie. 

\begin{thmm}[Crit\`ere de Juschenko-Monod]
Soit $\Gamma$ un groupe agissant sur $\Z$ par permutations \`a d\'eplacement born\'e. On suppose que le stabilisateur de $\N$ est moyennable. 
Alors $\Gamma$ est moyennable.
\end{thmm}

Dans le cas d'un auto\-hom\'eo\-mor\-phisme minimal d'un espace de Cantor, le th\'eor\`eme pr\'ec\'edent s'applique \`a $\lb\vpi\rb$, qui agit sur $\Z$ via l'identification de $\Z$ \`a une orbite d'un \'el\'ement quelconque $x_0$ dans l'espace de Cantor: 
$\psi\cdot n$, pour $\psi\in\lb\vpi\rb$ et $n\in\Z$, est l'unique entier $m$ tel que $\psi(\vpi^n(x_0))=\vpi^m(x_0)$. Putnam ayant montr\'e \cite{Pu} que le stabilisateur dans $\lb\vpi\rb$ de toute orbite positive $\{\vpi^n(x):n\in\N\}$ est localement fini (au sens o\`u toute partie finie engendre un sous-groupe fini), le th\'eor\`eme implique que $\lb\vpi\rb$ est moyennable.

\subsection*{Exemples}
Un exemple explicite de sous-d\'ecalage minimal $\vpi$ s'obtient comme ceci: on 
consid\`ere un irrationnel $\alpha$ et on consid\`ere, sur le cercle $\R/\Z$, la rotation $r_\alpha:z\mapsto z+\alpha$. On peut d\'efinir un espace $[\R/\Z]_{\Z\alpha}$, hom\'eomorphe \`a l'espace de Cantor, muni d'une projection sur le cercle par laquelle la rotation $r_\alpha$ se rel\`eve canoniquement en un auto\-hom\'eo\-mor\-phisme minimal $\vpi=\tilde{r}_\alpha(x)$. Une mani\`ere de l'obtenir est de remplacer chaque point $x$ de l'orbite $\Z\alpha\subset\R/\Z$ par une paire $\{x^-,x^+\}$, et de munir l'espace ainsi obtenu de l'ordre cyclique \'evident et de la topologie associ\'ee. Ainsi si $x\notin\Z\alpha$, on d\'efinit $\tilde{r}_\alpha(x)=x+\alpha$ et si $x\in\Z\alpha$ on d\'efinit $\tilde{r}_\alpha(x^\pm)=(x+\alpha)^\pm$; l'espace $[\R/\Z]_{\Z\alpha}$ est m\'etrisable, compact, non vide, totalement discontinu et sans point isol\'e et donc est hom\'eomorphe \`a l'espace de Cantor; l'auto\-hom\'eo\-mor\-phisme $\tilde{r}_\alpha$ est minimal. On v\'erifie alors que pour tout $\eps\in\Z\alpha\,\cap\mathopen]0,1/2\mathclose[\subset\R/\Z$, les translat\'es de l'ouvert ferm\'e $[0^+,\eps^-]$ s\'eparent les points de $[\R/\Z]_{\Z\alpha}$, si bien que $([\R/\Z]_{\Z\alpha},\varphi_\alpha)$ est un sous-d\'ecalage (voir le fait \ref{fait_sd}). Cette construction est due \`a Keane \cite[\S 5]{Kea} et le groupe plein-topologique associ\'e est consid\'er\'e sp\'ecifiquement par Matui dans \cite[Example 6.2]{Ma1}. Notons que $r_\alpha$ est uniquement ergodique (th\'eor\`eme d'\'equidistribution de Kronecker-Weyl, voir \cite[Proposition 4.2.1]{KH}) et les valeurs propres de l'op\'erateur unitaire associ\'e sur $L^2([\R/\Z]_{\Z\alpha})=L^2(\R/\Z)$ sont les $e^{2i\pi n\alpha}$ pour $n\in\Z$; en particulier les groupes cycliques $\langle r_\alpha\rangle\subset \Homeo([\R/\Z]_{\Z\alpha})$ pour $\alpha$ irrationnel dans $[0,1/2]$ sont deux \`a deux non conjugu\'es et donnent donc, par le r\'esultat de Boyle et Giordano-Putnam-Skau, des groupes pleins-topologiques deux \`a deux non isomorphes.   

\medskip

L'exemple pr\'ec\'edent est sous-groupe du groupe $\textnormal{IET}$ des \'echanges d'intervalles, introduit par Keane \cite{Kea}, habituellement d\'efini comme l'ensemble des translations par morceaux continues \`a droite de $\R/\Z$ mais qui, comme Keane l'a observ\'e \cite[\S 5]{Kea}, peut s'interpr\'eter comme groupe d'auto\-hom\'eo\-mor\-phismes d'un compact totalement discontinu. La question de la moyennabilit\'e du groupe entier $\textnormal{IET}$ est ouverte. La r\'esoudre consisterait \`a d\'emontrer un r\'esultat de moyennabilit\'e pour des groupes pleins-topologiques associ\'es \`a certaines actions de groupes ab\'eliens de type fini. Cependant, le th\'eor\`eme de Juschenko-Monod ne se g\'en\'eralise pas sans hypoth\`ese suppl\'ementaire: une construction \'el\'ementaire dans \cite{EM} montre que le groupe plein-topologique associ\'e \`a une action libre et minimale de $\Z^2$ sur un espace de Cantor peut contenir un groupe libre non ab\'elien.

\subsection*{Plan} (Ci-dessous, $\vpi$ est un auto\-hom\'eo\-mor\-phisme minimal de l'espace de Cantor.)
 
On va commencer par introduire les notions g\'en\'erales en d\'etail, dans la partie \ref{sdgp}; et \'enoncer quelques faits g\'en\'eraux (non sp\'ecifiques aux actions de $\Z$), notamment le fait \ref{fait_sn} qui montre que les groupes pleins-topologiques contiennent {\og beaucoup\fg}  d'\'el\'ements de torsion et montre en particulier que $\lb\vpi\rb'$ est un groupe infini. 

La partie \ref{pa} contient les principaux r\'esultats alg\'ebriques connus sur le groupe $\lb\vpi\rb'$. En particulier, les r\'esultats de Matui mentionn\'es plus haut sont \'etablis au \S\ref{simplicite} pour la simplicit\'e et au \S\ref{type} pour la type-finitude dans le cas d'un sous-d\'ecalage minimal.

D'autres r\'esultats sont prouv\'es au \S\ref{approxi}, notamment le r\'esultat, \'egalement d\^u \`a Matui, que $\lb\vpi\rb'$ n'est jamais de pr\'esentation finie, ainsi que le fait, d\^u \`a Grigorchuk et Medynets, qu'il n'est pas finiment approximable (d\'efinition \ref{lef}). Au \S\ref{sgl}, on prouve le r\'esultat, aussi d\^u \`a Matui, que pour un sous-d\'ecalage minimal infini, $\lb\vpi\rb'$ contient un semi-groupe libre \`a 2 g\'en\'erateurs et est donc \`a croissance exponentielle.

La partie \ref{smoy} contient le r\'esultat de moyennabilit\'e de Juschenko-Monod. Elle est ind\'ependante de la pr\'ec\'edente en ce qui concerne la preuve du crit\`ere. Le fait d'appliquer le crit\`ere est lui bas\'e sur le lemme de Putnam, qui se trouve au \S\ref{abelianise}.

\`a titre d'illustration de l'importance de l'hypoth\`ese de minimalit\'e dans certains des r\'esultats ci-dessus, quelques exemples de groupes pleins-topologiques $\lb\vpi\rb$ pour $\vpi$ sous-d\'ecalage non minimal sont \'etudi\'es au~\S\ref{enm}.

\subsection*{Probl\`emes et questions ouvertes}

On consid\`ere le groupe plein-topologique associ\'e \`a un sous-d\'ecalage minimal infini sur $\Z$. Rappelons que son groupe d\'eriv\'e $\lb\vpi\rb'$ est, par un r\'esultat de Matui, un groupe simple de type fini.

%%%%%%%%%%%%%%%%%%%%%%%%%%%%%%%%%%%%%%
%%%%%%%%%%%%%%%%%%%%%%%%%%%%%%%%%%%%%%
%%%%%%%%%%%%%%%%%%%%%%%%%%%%%%%%%%%%%%
%%%%%%%%%%%%%%%%%%%%%%%%%%%%%%%%%%%%%%
%%%%%%%%%%%%%%%%%%%%%%%%%%%%%%%%%%%%%%
%%%%%%%%%%%%%%%%%%%%%%%%%%%%%%%%%%%%%%
%%%%%%%%%%%%%%%%%%%%%%%%%%%%%%%%%%%%%%
%%%%%%%%%%%%%%%%%%%%%%%%%%%%%%%%%%%%%%
%%%%%%%%%%%%%%%%%%%%%%%%%%%%%%%%%%%%%%
%%%%%%%%%%%%%%%%%%%%%%%%%%%%%%%%%%%%%%
\renewcommand{\labelenumii}{(\arabic{enumi}\alph{enumii})}
\renewcommand{\labelenumii}{(\theenumi\theenumii)}
%%%%%%%%%%%%%%%%%%%%%%%%%%%%%%%%%%%%%%
%%%%%%%%%%%%%%%%%%%%%%%%%%%%%%%%%%%%%%
%%%%%%%%%%%%%%%%%%%%%%%%%%%%%%%%%%%%%%
%%%%%%%%%%%%%%%%%%%%%%%%%%%%%%%%%%%%%%
%%%%%%%%%%%%%%%%%%%%%%%%%%%%%%%%%%%%%%
%%%%%%%%%%%%%%%%%%%%%%%%%%%%%%%%%%%%%%
%%%%%%%%%%%%%%%%%%%%%%%%%%%%%%%%%%%%%%
%\makeatletter
%\renewcommand{\p@enumii}{\theenumi}
%\makeatother

\begin{enumerate}[(1)]
\item Estimer la fonction de F\o lner de $\lb\vpi\rb'$, peut-\^etre en discutant sur $\vpi$. Rappelons qu'apr\`es choix d'un syst\`eme g\'en\'erateur fini $S$ sym\'etrique et contenant 1, elle associe \`a $n\ge 1$ le nombre \[\textnormal{F\o l}(n)=\inf\{\# K\mid \#(SK\smallsetminus K)/\#(K)\le 1/n\},\]$K$ parcourant les parties finies non vides du groupe. De m\^eme, estimer la fonction de F\o lner dans les boules \[\textnormal{BF\o l}(n)=\inf\{\textnormal{diam}(K)\mid \#(SK\smallsetminus K)/\#(K)\le 1/n\}\] et la probabilit\'e de retour des marches al\'eatoires sur le graphe de Cayley de $\lb\vpi\rb'$.
\item Donner des r\'esultats g\'en\'eraux sur la structure des sous-groupes de $\lb\vpi\rb$. Par exemple:
\begin{enumerate}
\item \'etant moyennable, $\lb\vpi\rb$ ne contient pas de sous-groupe libre non ab\'elien. Donner une preuve directe de ce fait.
\item\label{sgtf} Est-ce que tous les sous-groupes de pr\'esentation finie (resp.\ r\'esi\-duel\-lement finis, resp.\ sans torsion) de $\lb\vpi\rb'$ sont \'el\'e\-men\-tai\-rement moyennables?
\item\label{expol} Est-il vrai que tout sous-groupe de type fini de $\lb\vpi\rb$ est \`a croissance exponentielle ou polynomiale? que tout sous-groupe infini de type fini poss\`ede un \'el\'ement d'ordre infini?  
\item\label{alter} Est-il vrai que tout sous-groupe de $\lb\vpi\rb$ soit contient un sous-groupe isomorphe \`a $\lb\psi\rb$ (pour un autre sous-d\'ecalage minimal $\psi$), soit est \'el\'ementairement moyennable? (Cela impliquerait une r\'eponse positive \`a toutes les questions de (\ref{sgtf}) et (\ref{expol}).)
\item Peut-on donner des r\'esultats g\'en\'eraux sur la structure des sous-groupes \'el\'ementairement moyennables de $\lb\vpi\rb$?
\item Classifier les morphismes entre les diff\'erents groupes $\lb\vpi\rb$ ou $\lb\vpi\rb'$ (ici $\vpi$ peut \^etre plus g\'en\'eralement un auto\-hom\'eo\-mor\-phisme minimal d'un espace de Cantor). En particulier, le groupe $\lb\vpi\rb'$ a-t-il des endomorphismes injectifs non surjectifs (autrement dit, en termes savants, est-il non cohopfien)? [Le cas des isomorphismes et automorphismes est bien compris, principalement par les r\'esultats de Giordano, Putnam et Skau \cite{GPS}, voir \S\ref{isoa}.]
\end{enumerate}
\item Peut-on classifier les groupes $\lb\vpi\rb'$ \`a quasi-isom\'etrie pr\`es? Peut-on caract\'eriser les groupes (de type fini, ou plus g\'en\'eralement localement compacts et compactement engendr\'es) qui leur sont quasi-isom\'etriques?
\item  Est-ce que $\lb\vpi\rb'$ admet une action propre sur un complexe cubique CAT(0) (sans hypoth\`ese de dimension finie)? Peut-on d\'eterminer les sous-groupes de codimension~1 dans les groupes $\lb\vpi\rb'$? Rappelons qu'un sous-groupe $H$ est dit de codimension~1 si le graphe de Schreier $\lb\vpi\rb'/H$ a au moins 2 bouts, voir par exemple Sageev \cite{Sag}. Les stabilisateurs dans $\lb\vpi\rb'$ des points de $X$ sont de codimension 1 puisque le graphe de Schreier est quasi-isom\'etrique \`a $\Z$.
\item Est-ce que le groupe $\lb\vpi\rb$ a une complexit\'e de d\'ecomposition finie? (Voir \cite{GTY}.)
\item Le groupe $\lb\vpi\rb'$ contient-il un arbre r\'egulier trivalent quasi-isom\'etriquement plong\'e? un semi-groupe libre quasi-isom\'etriquement plong\'e?
\item Le groupe $\lb\vpi\rb'$ a-t-il une croissance exponentielle uniforme? A-t-il un diam\`etre semi-libre (resp.\ sans torsion) uniforme (c'est-\`a-dire, existe-t-il un entier $N$ tel que pour tout syst\`eme g\'en\'erateur, la $N$-boule contient un couple d'\'el\'ements engendrant librement un semi-groupe libre (resp.\ contient un \'el\'ement sans torsion)?
\item Soit $s_\vpi$ cardinal minimal d'une famille g\'en\'eratrice de $\lb\vpi\rb'$. Existe-t-il des sous-d\'ecalages minimaux avec $s_\vpi$ arbitrairement grand?
\end{enumerate}

\noindent\textbf{Remerciements.}  Je remercie Mikael de la Salle pour une simplification substantielle de la preuve de la proposition \ref{mainprop2} et Pierre Py pour une correction dans la preuve du th\'eor\`eme \ref{dersim}. Je suis reconnaissant envers toutes les personnes m'ayant signal\'e diverses coquilles, omissions ou impr\'ecisions, et notamment Laurent Bartholdi, Pierre de la Harpe et Ralph Strebel pour leur relecture critique attentive. Je remercie Hiroki Matui et Jean-Fran\c cois Quint pour des discussions utiles.

%%%%%%%%%%%%%%%%%%%%%%%%%%%%%%%%%%%%%%
%%%%%%%%%%%%%%%%%%%%%%%%%%%%%%%%%%%%%%
%%%%%%%%%%%%%%%%%%%%%%%%%%%%%%%%%%%%%%
%%%%%%%%%%%%%%%%%%%%%%%%%%%%%%%%%%%%%%
%%%%%%%%%%%%%%%%%%%%%%%%%%%%%%%%%%%%%%
 \setcounter{tocdepth}{1}
%\tableofcontents
%%%%%%%%%%%%%%%%%%%%%%%%%%%%%%%%%%%%%%
%%%%%%%%%%%%%%%%%%%%%%%%%%%%%%%%%%%%%%
%%%%%%%%%%%%%%%%%%%%%%%%%%%%%%%%%%%%%%
%%%%%%%%%%%%%%%%%%%%%%%%%%%%%%%%%%%%%%
%%%%%%%%%%%%%%%%%%%%%%%%%%%%%%%%%%%%%%
%%%%%%%%%%%%%%%%%%%%%%%%%%%%%%%%%%%%%%
%%%%%%%%%%%%%%%%%%%%%%%%%%%%%%%%%%%%%%

\section{Sous-d\'ecalages et groupes pleins-topologiques}\label{sdgp}

\subsection{Sous-d\'ecalages et odom\`etres}\label{sdo}
Si $X$ est un espace topologique, on note $\Homeo(X)$ le groupe des hom\'eo\-mor\-phismes de $X$ sur lui-m\^eme, ou {\em auto\-hom\'eo\-mor\-phismes} de $X$. On consid\`ere un espace topologique $X$ muni d'une action par auto\-hom\'eo\-mor\-phismes d'un groupe discret $\Gamma$; on parlera souvent du couple $(\Gamma,X)$.

On supposera souvent que $X$ est {\em totalement s\'epar\'e}, au sens o\`u les fonctions continues de $X$ vers les espaces discrets s\'eparent les points de $X$, ou de mani\`ere \'equivalente, les clouverts de $X$ forment une base de la topologie; c'est un peu plus fort que totalement discontinu (voir \cite[Ex.~129]{SS} pour un contre-exemple sous-ensemble du plan {\og tipi de Cantor \'epoint\'e\fg}), et \'equivalent pour des espaces localement compacts \cite[II.\S 4, cor.\ de la prop.~6]{Bou}. 

Commen\c cons par introduire des exemples fondamentaux.

\begin{defn}
Soit $d\in\N^*$, soient $A_d$ un alphabet \`a $d$ lettres et $\Gamma$ un groupe discret quelconque. L'espace $A_d^\Gamma$, muni de la topologie produit, est compact totalement s\'epar\'e (si $d\ge 2$ et $\Gamma$ est infini d\'enombrable, il est hom\'eomorphe \`a un espace de Cantor). Il admet une action naturelle de $\Gamma$ par d\'ecalage: si $f$ est une fonction de $\Gamma$ dans $A_d$ et $g\in\Gamma$, on d\'efinit $g\cdot f$ par $g\cdot f(\gamma)=f(g^{-1}\gamma)$. Muni de l'action de $\Gamma$, l'espace $A_d^\Gamma$ est appel\'e d\'ecalage (\`a $d$ lettres) sur $\Gamma$. Un ferm\'e $\Gamma$-invariant est appel\'e {\em sous-d\'ecalage topologique} \`a $d$ lettres sur $\Gamma$ (on omettra syst\'ematiquement l'adjectif topologique). Par extension et abus de langage, un espace $X$ muni d'une action de $\Gamma$ par auto\-hom\'eo\-mor\-phismes sera appel\'e {\em sous-d\'ecalage} s'il est hom\'eo\-morphe, de mani\`ere $\Gamma$-\'equivariante, \`a un sous-d\'ecalage au sens ci-dessus.
Lorsqu'on parlera de d\'ecalage ou sous-d\'ecalage, on supposera toujours implicitement le nombre de lettres fini. 
\end{defn}

Le fait suivant donne une caract\'erisation \'el\'ementaire et tr\`es utile des sous-d\'ecalages. On rappelle qu'un {\em clouvert} est par d\'efinition un ouvert ferm\'e. On note $\Clo(X)$ l'ensemble des clouverts de $X$, qui est une sous-alg\`ebre bool\'eenne de~$2^X$.

\begin{fact}\label{fait_sd}
Un espace $X$ muni d'une action de $\Gamma$ par auto\-hom\'eo\-mor\-phismes est un sous-d\'ecalage si et seulement si $X$ est compact et il existe une partition finie de $X$ en clouverts, dont les $\Gamma$-translat\'es s\'eparent les points. Plus pr\'ecis\'ement, il est hom\'eo\-morphe, de mani\`ere $\Gamma$-\'equivariante, \`a un sous-d\'ecalage \`a $d$ lettres si et seulement s'il existe une partition de $X$ en (au plus) $d$ clouverts, dont les $\Gamma$-translat\'es s\'eparent les points.
\end{fact}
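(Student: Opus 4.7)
La d\'emonstration s'articule en deux implications assez directes, le point d\'elicat \'etant simplement la coh\'erence des conventions sur l'action par d\'ecalage.

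Pour le sens direct, supposons que $X$ est (\'equivariantement hom\'eomorphe \`a) un ferm\'e $\Gamma$-invariant de $A_d^\Gamma$; il est compact car ferm\'e dans un compact. On consid\`ere alors la partition naturelle obtenue en \'evaluant en l'identit\'e: pour $a\in A_d$, on pose $U_a=\{f\in X:f(e)=a\}$. Ce sont des clouverts (images r\'eciproques de singletons par une projection coordonn\'ee) et ils forment une partition de $X$ en au plus $d$ morceaux. Un bref calcul sur la d\'efinition de l'action $g\cdot f(\gamma)=f(g^{-1}\gamma)$ donne $g\cdot U_a=\{f\in X:f(g)=a\}$. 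Si $f\neq f'$ dans $X$, il existe $g$ tel que $f(g)\neq f'(g)$; en posant $a=f(g)$ on obtient $f\in g\cdot U_a$ mais $f'\notin g\cdot U_a$, ce qui \'etablit la s\'eparation des points par les $\Gamma$-translat\'es.

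Pour la r\'eciproque, \'etant donn\'ee une partition finie $X=U_1\sqcup\cdots\sqcup U_d$ en clouverts dont les $\Gamma$-translat\'es s\'eparent les points, je construis l'hom\'eomorphisme $\Gamma$-\'equivariant \[\Phi:X\longrightarrow A_d^\Gamma,\qquad \Phi(x)(g)=i\ \text{si et seulement si}\ x\in g\cdot U_i.\] La cl\'e est que pour chaque $g$ fix\'e, la famille $(g\cdot U_i)_{1\le i\le d}$ reste une partition clouverte de $X$, donc l'application coordonn\'ee $x\mapsto \Phi(x)(g)$ est continue \`a valeurs dans $A_d$ discret; par propri\'et\'e universelle de la topologie produit, $\Phi$ est continue. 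L'\'equivariance $\Phi(h\cdot x)=h\cdot \Phi(x)$ se v\'erifie directement en revenant aux d\'efinitions: $\Phi(h\cdot x)(g)=i\Leftrightarrow h\cdot x\in g\cdot U_i\Leftrightarrow x\in (h^{-1}g)\cdot U_i\Leftrightarrow \Phi(x)(h^{-1}g)=i\Leftrightarrow (h\cdot\Phi(x))(g)=i$. L'injectivit\'e de $\Phi$ est exactement l'hypoth\`ese de s\'eparation des points par les translat\'es.

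Il reste \`a conclure: $\Phi$ \'etant continue injective d'un compact vers un s\'epar\'e, c'est un hom\'eomorphisme sur son image, qui est compacte donc ferm\'ee, et $\Gamma$-invariante par \'equivariance. Ainsi $(\Gamma,X)$ est $\Gamma$-\'equivariantement hom\'eomorphe \`a un sous-d\'ecalage \`a $d$ lettres, ce qui ach\`eve la preuve et donne simultan\'ement la version pr\'ecis\'ee concernant le nombre de lettres. Le seul pi\`ege est de bien fixer une fois pour toutes la convention $g\cdot f(\gamma)=f(g^{-1}\gamma)$ afin que les calculs d'\'equivariance tombent juste; aucune difficult\'e de fond n'est \`a pr\'evoir.
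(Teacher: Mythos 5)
Votre preuve est correcte et suit essentiellement la m\^eme d\'emarche que celle du texte: m\^eme partition par \'evaluation en l'identit\'e pour le sens direct, et m\^eme application $x\mapsto(\iota(\gamma^{-1}x))_{\gamma\in\Gamma}$ (votre $\Phi$ co\"incide avec le $\Psi$ du texte) dont l'injectivit\'e \'equivaut \`a la s\'eparation des points, la compacit\'e donnant l'hom\'eomorphisme sur l'image ferm\'ee. Rien \`a redire.
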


\begin{rema}La condition du fait \ref{fait_sd} sur la partition se reformule en disant que l'alg\`ebre bool\'eenne $\Clo(X)$ est de type fini comme alg\`ebre munie d'une action de $\Gamma$; la condition plus pr\'ecise dit qu'elle est engendr\'ee par $d-1$ \'el\'ements disjoints.\end{rema}

\begin{proof}[D\'emonstration du fait \ref{fait_sd}]
Dans un sens, on remarque que, dans $A_d^\Gamma$, les translat\'es des clouverts $\Omega_i=\{w\in A_d^\Gamma\mid w(1)=i\}$, pour $i\in A_d$, s\'eparent les points. R\'eciproquement, si $X=\bigsqcup_{i=1}^dX_i$ avec $X_i$ clouvert, on d\'efinit, pour $x\in X$, la lettre $\iota(x)$ comme l'unique $i\in A_d$ tel que $x\in X_i$; alors
on remarque que l'application continue $\Gamma$-\'equivariante
\begin{eqnarray*}
\Psi:X & \to  & A_d^\Gamma\\
x & \mapsto & (\iota(\gamma^{-1} x))_{\gamma\in\Gamma}
\end{eqnarray*}
est injective si et seulement si les translat\'es de la partition $(X_i)$ s\'eparent les points. La compacit\'e de $X$ implique que $\Psi$ est un hom\'eo\-mor\-phisme sur son image, qui est un ferm\'e $\Gamma$-invariant de $A_d^\Gamma$.
\end{proof}

On a un autre type d'exemple, en un certain sens diam\'etralement oppos\'e.

\begin{defn}\label{dodo}On dit que $(\Gamma,X)$ est un {\em odom\`etre} si $X$ est compact totalement s\'epar\'e, et si l'action de $\Gamma$ sur l'ensemble $\Clo(X)$ des clouverts de $X$ a toutes ses orbites finies.
\end{defn}

Un exemple prototypique est l'action par addition de $\Z$ sur l'anneau des nombres entiers $p$-adiques $\Z_p$; il est souvent d\'ecrit de mani\`ere plus explicite, l'action du g\'en\'erateur positif de $\Z$ consistant \`a effectuer effectivement l'addition de 1 \`a un entier $p$-adique, en posant les retenues!

%%%%%%%%%%%%%%%%%%%%%%%%%%%%%%%%%%%%%%%%%%%%%%%%%%%%%%%
%%%%%%%%%%%%%%%%%%%%%%%%%%%%%%%%%%%%%%%%%%%%%%%%%%%%%%%
%%%%%%%%%%%%%%%%%%%%%%%%%%%%%%%%%%%%%%%%%%%%%%%%%%%%%%%
%%%%%%%%%%%%%%%%%%%%%%%%%%%%%%%%%%%%%%%%%%%%%%%%%%%%%%%
%%%%%%%%%%%%%%%%%%%%%%%%%%%%%%%%%%%%%%%%%%%%%%%%%%%%%%%
%%%%%%%%%%%%%%%%%%%%%%%%%%%%%%%%%%%%%%%%%%%%%%%%%%%%%%%
%%%%%%%%%%%%%%%%%%%%%%%%%%%%%%%%%%%%%%%%%%%%%%%%%%%%%%%
%%%%%%%%%%%%%%%%%%%%%%%%%%%%%%%%%%%%%%%%%%%%%%%%%%%%%%%
%%%%%%%%%%%%%%%%%%%%%%%%%%%%%%%%%%%%%%%%%%%%%%%%%%%%%%%
%%%%%%%%%%%%%%%%%%%%%%%%%%%%%%%%%%%%%%%%%%%%%%%%%%%%%%%
%%%%%%%%%%%%%%%%%%%%%%%%%%%%%%%%%%%%%%%%%%%%%%%%%%%%%%%
%%%%%%%%%%%%%%%%%%%%%%%%%%%%%%%%%%%%%%%%%%%%%%%%%%%%%%%
%%%%%%%%%%%%%%%%%%%%%%%%%%%%%%%%%%%%%%%%%%%%%%%%%%%%%%%
%%%%%%%%%%%%%%%%%%%%%%%%%%%%%%%%%%%%%%%%%%%%%%%%%%%%%%%    ENUMX
%%%%%%%%%%%%%%%%%%%%%%%%%%%%%%%%%%%%%%%%%%%%%%%%%%%%%%%

\newcounter{foo}
\begin{fact}\label{carodo}Soit $X$ un espace topologique muni d'une action par auto\-hom\'eo\-mor\-phismes de $\Gamma$. \'equivalences:
\begin{enumerate}[~(i)]
\item\label{i_odo} $X$ est un odom\`etre;
\item\label{i_shift} $X$ est compact totalement s\'epar\'e et toute application \'equivariante continue de $X$ vers un sous-d\'ecalage a une image finie;
\item\label{i_proj} $X$ est une limite projective filtrante de $\Gamma$-ensembles finis.
\end{enumerate}
Si de plus $X$ admet une $\Gamma$-orbite dense, (\ref{i_proj}) se reformule en:
\begin{enumerate}[(i')]
\addtocounter{enumi}{2}
\item\label{e_quo} $X$ est isomorphe, comme espace topologique muni d'une action de $\Gamma$ (et canoniquement, apr\`es choix d'un point de $X$ dont l'orbite est dense) au quotient \`a droite $\hat{\Gamma}/K$ du compl\'et\'e profini de $\Gamma$ par un sous-groupe ferm\'e $K$.
\end{enumerate}
Si en outre $\Gamma$ est ab\'elien, alors cela se reformule encore en
\begin{enumerate}[(i'')]
\addtocounter{enumi}{2}
\item\label{e_qua} $X$ s'identifie (apr\`es choix d'un point de $X$) \`a un groupe ab\'elien muni d'un morphisme d'image dense \'emanant de $\Gamma$.
\end{enumerate}
\end{fact}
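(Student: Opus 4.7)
Le plan est de traiter successivement les équivalences (i) $\Leftrightarrow$ (ii) et (i) $\Leftrightarrow$ (iii), puis les reformulations (iii') et (iii'') sous leurs hypothèses respectives.

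Pour (i) $\Rightarrow$ (ii), étant donnée une application équivariante continue $f:X\to Y$ vers un sous-décalage $Y=\bigsqcup_{i=1}^d Y_i$ comme au fait \ref{fait_sd}, les préimages $f^{-1}(Y_i)$ sont des clouverts à $\Gamma$-orbites finies par (i). L'algèbre booléenne engendrée par ces orbites est donc une sous-algèbre finie de $\Clo(X)$, et comme ses atomes séparent deux points ayant des images distinctes par $f$, l'image de $f$ est finie. Pour (ii) $\Rightarrow$ (i), étant donné $U\in\Clo(X)$, on considère l'application équivariante continue $\Psi:X\to\{0,1\}^\Gamma$ définie par $\Psi(x)(\gamma)=\chi_U(\gamma^{-1}x)$; son image est un sous-décalage, et si (ii) vaut, $\Psi$ a une image finie, partitionnant $X$ en un nombre fini de fibres clouvertes. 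Or $\gamma U$ est la réunion des fibres $\Psi^{-1}(w)$ avec $w(\gamma)=1$, donc l'orbite de $U$ dans $\Clo(X)$ est finie.

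Pour (i) $\Rightarrow$ (iii), on considère l'ensemble filtrant par raffinement commun des partitions clouvertes $\Gamma$-invariantes finies de $X$; la finitude des orbites dans $\Clo(X)$ assure que tout clouvert est raffiné par une telle partition (prendre les atomes de l'algèbre engendrée par son orbite). La séparation totale entraîne que ces partitions séparent les points, d'où une application continue équivariante injective $X\to\varprojlim_{\mathcal{P}} X/\mathcal{P}$, surjective par l'argument classique d'intersection filtrante de clouverts non vides dans un compact, et donc un $\Gamma$-homéo\-mor\-phisme. Réciproquement (iii) $\Rightarrow$ (i): dans une limite projective filtrante de $\Gamma$-ensembles finis, tout clouvert est, par compacité, image réciproque d'une partie d'un étage fini, donc d'orbite finie.

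Sous l'hypothèse d'existence d'une orbite dense $\Gamma\cdot x_0$, chaque $X_\alpha$ de la limite projective est transitif (car une orbite à la fois dense et finie remplit tout), donc de la forme $\Gamma/H_\alpha$ avec $H_\alpha$ d'indice fini. L'action se prolonge continûment à $\hat\Gamma$ avec stabilisateur $\overline{H_\alpha}$, ce qui fournit une application continue équivariante $\hat\Gamma\to X$, $\gamma\mapsto\gamma\cdot x_0$, d'image compacte et dense donc surjective; son stabilisateur étant un sous-groupe fermé $K$, on obtient le $\Gamma$-homéomorphisme canonique $\hat\Gamma/K\simeq X$, d'où (iii'). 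Si en outre $\Gamma$ est abélien, $K$ est automatiquement distingué dans $\hat\Gamma$ et $X$ hérite d'une structure naturelle de groupe abélien compact totalement séparé, vers lequel $\Gamma$ se projette à image dense; réciproquement, étant donné un tel morphisme $\Gamma\to A$, la propriété universelle de $\hat\Gamma$ et la compacité de $A$ fournissent le sous-groupe $K$ voulu.

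Le pas le plus délicat me paraît être (ii) $\Rightarrow$ (i): il faut fabriquer, à partir du seul clouvert $U$, un sous-décalage suffisamment riche pour que la finitude de l'image traduise exactement la finitude de l'orbite de $U$, et l'encodage par le cocycle $\chi_U(\gamma^{-1}\cdot)$ est le choix canonique. Le reste relève de manipulations standard d'algèbres booléennes, de compacité, et de la propriété universelle du complété profini.
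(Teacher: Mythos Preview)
Your proof is correct and follows essentially the same approach as the paper. The only differences are cosmetic: for (i)$\Rightarrow$(ii) the paper reduces to showing that a subshift which is an odometer is finite and argues via $\Lambda$-periodicity of words, whereas you pull back the partition and count atoms of the generated Boolean algebra; for (ii)$\Rightarrow$(i) the paper phrases the same construction $\Psi(x)(\gamma)=\chi_U(\gamma^{-1}x)$ in contrapositive form. The treatment of (i)$\Leftrightarrow$(iii), (iii'), and (iii'') matches the paper's.
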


\begin{proof}
(\ref{i_proj})$\Rightarrow$(\ref{i_odo}) est imm\'ediat. Faisons sa r\'eciproque: pour toute partie finie $I$ de $\Clo(X)$, on peut consid\'erer l'union $J(I)$ des $\Gamma$-orbites des \'el\'ements de $I$, qui est encore finie par hypoth\`ese et d\'efinir $X_I$ comme le quotient de $X$ par la partition d\'efinie par l'alg\`ebre bool\'eenne engendr\'ee par $J(I)$, c'est un $\Gamma$-ensemble, quotient de $X$. Clairement toute inclusion $I\subset I'$ induit une application quotient $X_{I'}\to X_I$. L'application naturelle de $X$ vers la limite projective des $X_I$ ($I$ parcourant les parties finies de $\Clo(X)$) a donc une image dense; comme $X$ est totalement s\'epar\'e, cette application est injective; par compacit\'e c'est donc un hom\'eo\-mor\-phisme.

Pour (\ref{i_odo})$\Rightarrow$(\ref{i_shift}), il suffit de montrer qu'un sous-d\'ecalage $X\subset A^\Gamma$ qui est un odom\`etre est n\'ecessairement fini. Par hypoth\`ese, il existe un sous-groupe d'indice fini $\Lambda$ de $\Gamma$ tel que pour tout $a\in A$, en notant $\Omega_a=\{w\in X:w(1)=a\}$, on a $g\Omega_a=\Omega_a$ pour tout $g\in\Lambda$. On en d\'eduit que $X$ est constitu\'e de suites $\Lambda$-p\'eriodiques \`a droite (et est donc fini): en effet, on voit dans un premier temps que $w(\lambda)=w(1)$ pour tout $\lambda\in\Lambda$ et $w\in X$. Puis si $g\in G$, comme $g^{-1}w\in X$, on a $g^{-1}w(\lambda)=g^{-1}w(1)$, autrement dit $w(g\lambda)=w(g)$.

R\'eciproquement, si $X$ n'est pas un odom\`etre, il existe un clouvert $\Omega$ dont la $\Gamma$-orbite est infinie. La construction du fait \ref{fait_sd} fournit une application \'equivariante continue vers le d\'ecalage $\{0,1\}^\Gamma$, d'image infinie.

L'assertion suppl\'ementaire (\ref{e_quo}') implique \'evidemment (\ref{i_proj}) en \'ecrivant $K$ comme intersection filtrante d\'ecroissante de sous-groupes ouverts d'indice fini. R\'eci\-pro\-quement, supposons que $X$ est limite projective filtrante de $\Gamma$-ensembles finis $X_i$. En fixant un point $w$ dont l'orbite est dense, on peut identifier chaque $X_i$ \`a un espace quotient $\hat{\Gamma}/\Lambda_i$ avec $\Lambda_i$ ouvert d'indice fini. L'application naturelle de $\hat{\Gamma}/\bigcap\Lambda_i$ vers la limite projective des $\hat{\Gamma}/\Lambda_i$ est continue, injective et d'image dense, donc est un hom\'eo\-mor\-phisme par compacit\'e; elle est $\Gamma$-\'equivariante.

Pour $\Gamma$ ab\'elien, pour obtenir (\ref{e_qua}'') \`a partir de (\ref{e_quo}') (le sens inverse \'etant trivial), on remarque que $K$ est distingu\'e et le groupe $\hat{\Gamma}/K$ est bien un groupe muni d'un morphisme d'image dense \'emanant de $\Gamma$. 
\end{proof}

\subsection{Semi-groupes pleins et groupes pleins}\label{defpl}Rappelons qu'une fonction {\em \'etag\'ee} entre deux ensembles est une fonction dont l'image est finie.

\begin{defn}
Soit $\Gamma$ un groupe discret agissant par auto\-hom\'eo\-mor\-phismes sur un espace topologique. On d\'efinit le {\em semi-groupe plein-topologique} associ\'e \`a $(\Gamma,X)$ comme
\[\lb\Gamma,X\rb\os=\{f:X\to X:\;\exists \kappa:X\to\Gamma\text{ continue \'etag\'ee }:\forall x\in X,\;f(x)=\kappa(x).x\}.
\]
On dit que la fonction \'etag\'ee $\kappa$ ci-dessus est {\em associ\'ee} \`a $f$.
\end{defn}

Notons que la pr\'ecision que $\kappa$ est \'etag\'ee est superflue lorsque $X$ est compact. Il est clair que $\lb\Gamma,X\rb\os$ est constitu\'e d'applications continues $X\to X$, contient l'identit\'e et est stable par composition, c'est donc bien un semi-groupe. Si l'action de $\Gamma$ sur $X$ est libre, ce qui sera souvent le cas par la suite, l'application $\kappa$ associ\'ee \`a un \'el\'ement $f$ est uniquement d\'etermin\'ee par $f$, ce qui donne une bijection canonique entre $\lb\Gamma,X\rb\os$ et l'ensemble $\mathcal{C}_\textnormal{et}(X,\Gamma)$ des fonctions continues \'etag\'ees $X\to\Gamma$. Ceci reste valable sous l'hypoth\`ese plus faible que tout \'el\'ement de $\Gamma$ distinct de l'identit\'e a un ensemble de points fixes d'int\'erieur vide dans $X$. 

Remarquons d\'ej\`a que si $X$ est compact m\'etrisable, et $\Gamma$ d\'enombrable (on consid\`ere les ensemble finis comme d\'enombrables), alors $\mathcal{C}_\textnormal{et}(X,\Gamma)$ est d\'enombrable, et en particulier $\lb\Gamma,X\rb\os$ est d\'enombrable.

\begin{defn}Le {\em groupe plein-topologique} de $(\Gamma,X)$ est d\'efini comme l'ensemble des \'el\'ements inversibles du semi-groupe $\lb\Gamma,X\rb\os$; on le note $\lb\Gamma,X\rb$.
\end{defn}
On peut remarquer que $$\lb\Gamma,X\rb=\lb\Gamma,X\rb\os\cap\Homeo(X).$$
En effet, on v\'erifie que pour $f\in \Homeo(X)$ donn\'e par $f(x)=\kappa(x).x$, on a $f^{-1}(x)=\penalty0\kappa(f^{-1}(x))^{-1}.x$ pour tout $x\in X$.

Lorsque $\Gamma\subset\Homeo(X)$ et $X$ est fix\'e, il est commode d'\'ecrire $\lb\Gamma\rb\os$ et $\lb\Gamma\rb$. Il est souvent possible de s'y ramener, en rempla\c cant $\Gamma$ par son image dans $\Homeo(X)$. Si $\Gamma$ est engendr\'e par un auto\-hom\'eo\-mor\-phisme $\vpi$, on \'ecrit alors $\lb\vpi\rb\os$ et $\lb\vpi\rb$. Dans une direction diff\'erente, le groupe $\Gamma$ peut \^etre fix\'e et l'espace $X$ sur lequel il agit variable, et on \'ecrit alors $\lb X\rb\os$ et $\lb X\rb$.

Si $\Gamma\subset\Homeo(X)$, on a toujours $\Gamma\subset\lb\Gamma\rb$. Remarquons que si $X$ est connexe, alors toute application continue $X\to\Gamma$ est constante, de sorte que $\lb\Gamma\rb\os=\lb\Gamma\rb=\Gamma$. Cette construction est donc surtout int\'eressante lorsque $X$ est totalement s\'epar\'e. 

Le groupe $\lb\Gamma,X\rb$ sera consid\'er\'e ici comme un groupe discret.
L'adjectif (quelque peu maladroit) {\og topologique\fg}  permet de distinguer $\lb\Gamma,X\rb$ du {\em groupe plein} $[\Gamma,X]\subset\Homeo(X)$ d\'efini comme le pr\'ec\'edent, mais sans supposer l'application $\kappa$ continue ou \'etag\'ee. Le groupe plein est souvent beaucoup plus gros que le groupe plein-topo\-logique. 

\begin{rema}\label{parado}
On dit que le groupe $\Gamma$ est paradoxal si, munissant $\Gamma$ de l'action \`a gauche sur lui-m\^eme, il existe dans $\lb\Gamma,\Gamma\rb\os$ deux injections d'images disjointes. Le th\'eor\`eme de Tarski (voir \cite{HS86}) dit que $\Gamma$ est paradoxal si et seulement si $\Gamma$ est non moyennable, et on peut alors m\^eme supposer que les images des deux injections pr\'ec\'edentes partitionnent $\Gamma$.
\end{rema}

On commence par quelques propri\'et\'es \'el\'ementaires et g\'en\'erales des groupes pleins-topologiques. En remarquant que les orbites de $\lb\Gamma\rb$ sont contenues dans les $\Gamma$-orbites, on obtient:

\begin{fact}
Si $\Gamma\subset\Homeo(X)$ est un groupe fini (ou, plus g\'en\'eralement, a des orbites de cardinal born\'e), alors les orbites de $[\Gamma]$ sont de cardinal born\'e, de sorte que le groupe plein $[\Gamma]$ (et donc $\lb\Gamma\rb$) est un groupe localement fini, d'exposant fini, car il se plonge dans un produit (infini) de groupes finis de cardinal born\'e.
\end{fact}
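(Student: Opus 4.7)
\emph{Plan de preuve.} Soit $N$ un entier majorant le cardinal des $\Gamma$-orbites. Je commencerai par observer que, pour tout $f\in[\Gamma]$ et $x\in X$, la relation $f(x)=\kappa(x).x$ avec $\kappa(x)\in\Gamma$ entra\^\i ne que $f(x)$ appartient \`a la $\Gamma$-orbite de $x$: les $[\Gamma]$-orbites sont donc contenues dans les $\Gamma$-orbites et de cardinal $\le N$, et chaque $f\in[\Gamma]$ pr\'eserve globalement chaque $\Gamma$-orbite $O$, en y induisant une permutation $f|_O\in\textnormal{Sym}(O)$. L'application
\[
\rho:[\Gamma]\longrightarrow\prod_O\textnormal{Sym}(O),\qquad f\mapsto (f|_O)_O,
\]
est alors un morphisme de groupes, \'evidemment injectif puisqu'un auto\-hom\'eo\-mor\-phisme est d\'etermin\'e par ses restrictions aux orbites. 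Chaque $\textnormal{Sym}(O)$ \'etant fini de cardinal $\le N!$, ceci fournira le plongement voulu dans un produit de groupes finis de cardinal uniform\'ement born\'e, et permettra de conclure que l'exposant de $[\Gamma]$ divise $N!$.

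Il restera alors \`a \'etablir la finitude locale. \'Etant donn\'e un sous-groupe $H=\langle f_1,\dots,f_k\rangle$ de type fini, je consid\`ere le morphisme canonique $\pi:F_k\twoheadrightarrow H$ issu du groupe libre sur $k$ g\'en\'erateurs. Pour chaque orbite $O$, la composition $F_k\xrightarrow{\pi}H\to\textnormal{Sym}(O)$ a un noyau $K_O\lhd F_k$ qui ne d\'epend que de l'action obtenue de $F_k$ sur $O$. Or une telle action se ram\`ene \`a la donn\'ee de $k$ permutations d'un ensemble de cardinal $n\le N$, donn\'ees en nombre fini; il n'y aura donc qu'un nombre fini de valeurs pour $K_O$. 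En choisissant $O_1,\dots,O_m$ de sorte que $\{K_{O_j}\}=\{K_O\}$, on aura $\bigcap_O K_O=K_{O_1}\cap\dots\cap K_{O_m}$, et par injectivit\'e de $\rho$, cette intersection est \'egale \`a $\ker\pi$. Donc $H$ s'injectera dans le produit fini $\prod_{j=1}^m\textnormal{Sym}(O_j)$ et sera fini.

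L'obstacle principal me semble \^etre la derni\`ere \'etape: elle rel\`eve d'un principe de tiroir selon lequel l'ensemble des actions d'un groupe libre de rang fix\'e $k$ sur un ensemble de cardinal born\'e par $N$ est fini, ce qui r\'eduit une intersection potentiellement infinie $\bigcap_O K_O$ \`a une intersection finie, et permet de contourner le fait qu'un produit infini de groupes finis de cardinal born\'e n'est pas lui-m\^eme fini.
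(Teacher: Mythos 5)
Votre preuve est correcte et suit exactement la d\'emarche que le papier laisse implicite : les $[\Gamma]$-orbites sont contenues dans les $\Gamma$-orbites, d'o\`u un plongement de $[\Gamma]$ dans le produit $\prod_O\mathrm{Sym}(O)$ de groupes finis de cardinal born\'e par $N!$. Vous explicitez en outre, par l'argument de tiroirs standard sur les actions de $F_k$ sur des ensembles de cardinal $\le N$, pourquoi un sous-groupe d'un tel produit est localement fini --- \'etape que le papier ne d\'etaille pas mais qui est bien n\'ecessaire, l'exposant fini seul ne suffisant pas.
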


Notons que $X$ est totalement s\'epar\'e si et seulement si les clouverts s\'eparent les points. Dans ce cas, on a

\begin{fact}\label{fait_sn}
Si $X$ est totalement s\'epar\'e et si $\Gamma$ agit sur $X$ par auto\-hom\'eo\-mor\-phismes avec des orbites de cardinal non born\'e (par exemple, $\Gamma$ a une orbite infinie), alors $\lb\Gamma\rb$ contient, pour tout $n$, un sous-groupe isomorphe au groupe sym\'etrique $\mathfrak{S}_n$ (et contient m\^eme une copie du produit direct restreint $\bigoplus_{n\ge 1}\mathfrak{S}_n$). En particulier, le groupe $\lb\Gamma\rb$ ne satisfait aucune loi (un groupe $G$ satisfait une loi $w$, o\`u $w(t_1,\dots,t_k)$ est un \'el\'ement non trivial du groupe libre \`a $k$ g\'en\'erateurs, si $w(g_1,\dots,g_k)=1$ pour tous $g_1,\dots,g_k$ dans $G$).
\end{fact}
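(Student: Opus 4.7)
Pour la premi\`ere affirmation, fixons $n$ tel qu'il existe une orbite de $\Gamma$ de cardinal au moins $n$, et choisissons $x_1,\dots,x_n$ distincts dans une telle orbite ainsi que des $\gamma_i\in\Gamma$ avec $\gamma_ix_1=x_i$ (et $\gamma_1=\textnormal{id}$). On utilisera d'abord le fait que $X$ est totalement s\'epar\'e pour obtenir des clouverts deux \`a deux disjoints $V_i\ni x_i$, puis la continuit\'e des $\gamma_i$ pour extraire un clouvert $U$ contenant $x_1$ et tel que les $U_i:=\gamma_i(U)\subset V_i$ restent deux \`a deux disjoints. Pour $\sigma\in\mathfrak{S}_n$, on d\'efinira alors l'application $f_\sigma$ qui vaut $\gamma_{\sigma(i)}\gamma_i^{-1}$ sur chaque $U_i$ et l'identit\'e sur $X\smallsetminus\bigsqcup_iU_i$. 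La fonction $\kappa$ associ\'ee prendra ses valeurs dans l'ensemble fini $\{\gamma_{\sigma(i)}\gamma_i^{-1}:1\le i\le n\}\cup\{e\}$ et sera constante sur chacun des clouverts de la partition; elle est donc continue et \'etag\'ee. Puisque $f_\sigma$ envoie bijectivement $U_i$ sur $U_{\sigma(i)}$ tout en fixant le compl\'ement, c'est un hom\'eo\-mor\-phisme, et donc un \'el\'ement de $\lb\Gamma\rb$.

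La v\'erification que $\sigma\mapsto f_\sigma$ est un morphisme injectif se fait par calcul direct: pour $y\in U_i$, $f_\tau(y)$ tombe dans $U_{\tau(i)}$, o\`u $f_\sigma$ agit comme $\gamma_{\sigma\tau(i)}\gamma_{\tau(i)}^{-1}$, d'o\`u $f_\sigma\circ f_\tau(y)=\gamma_{\sigma\tau(i)}\gamma_i^{-1}\cdot y=f_{\sigma\tau}(y)$; et sur $X\smallsetminus\bigsqcup_iU_i$ les deux membres valent $y$. L'injectivit\'e r\'esulte de ce que $f_\sigma(x_i)=x_{\sigma(i)}$.

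Pour la copie de $\bigoplus_{n\ge 1}\mathfrak{S}_n$, le plan est d'it\'erer la construction pr\'ec\'edente: comme les cardinaux des orbites sont non born\'es, on choisit pour chaque $k\ge 1$ un $n_k$-uplet de points distincts dans une orbite (avec $n_k\to\infty$); en prenant \`a chaque \'etape le clouvert $U^{(k)}$ assez petit, on assure que les supports $\bigsqcup_iU_i^{(k)}$ sont deux \`a deux disjoints pour des $k$ diff\'erents. Les copies de $\mathfrak{S}_{n_k}$ ainsi obtenues commutent, ce qui fournit une injection de $\bigoplus_k\mathfrak{S}_{n_k}$, et \emph{a fortiori} de $\bigoplus_{n\ge 1}\mathfrak{S}_n$, dans $\lb\Gamma\rb$.

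L'absence de loi s'ensuivra en invoquant la r\'esidualit\'e finie du groupe libre: pour tout mot non trivial $w(t_1,\dots,t_k)$, il existe un morphisme du groupe libre $F_k$ dans un certain $\mathfrak{S}_n$ envoyant $w$ sur un \'el\'ement non trivial, et donc $\mathfrak{S}_n$ (et \`a plus forte raison $\lb\Gamma\rb$) ne satisfait pas la loi $w$. L'obstacle principal est le choix initial des clouverts $U_i$ deux \`a deux disjoints, qui demande de combiner la totale s\'eparation de $X$ et la continuit\'e des $\gamma_i$; une fois ce point assur\'e, tout le reste est routinier.
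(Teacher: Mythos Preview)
Votre preuve est correcte et suit exactement la m\^eme approche que celle du papier: m\^eme construction des $f_\sigma$ (not\'es $T_\sigma$ dans le papier) \`a partir d'un clouvert $U$ dont les translat\'es $\gamma_iU$ sont disjoints, m\^eme argument de supports disjoints pour $\bigoplus\mathfrak{S}_n$, et m\^eme recours \`a la r\'esidualit\'e finie des groupes libres pour l'absence de loi. Vous explicitez un peu plus le choix de $U$ (via les $V_i$ puis intersection des $\gamma_i^{-1}(V_i)$), mais c'est pr\'ecis\'ement ce que le papier sous-entend en une phrase.
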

\begin{proof}
On consid\`ere des \'el\'ements distincts $x_1,\dots,x_n$ dans une m\^eme \mbox{$\Gamma$-orbite,} disons $x_i=g_ix_1$. On choisit un voisinage clouvert $U$ de $x_1$ tel que les $g_iU$ sont deux \`a deux disjoints. Si $\sigma\in\mathfrak{S}_n$ est une permutation de $\{1,\dots,n\}$, alors pour $x\in X$, on d\'efinit $T_\sigma(x)=g_{\sigma(i)}g_i^{-1}x\in g_{\sigma(i)}U$ si $x\in g_iU$, et $T_\sigma(x)=x$ si $x\notin\bigcup_ig_iU$. On v\'erifie imm\'ediatement que $\rho:\sigma\mapsto T_\sigma$ est un morphisme injectif de $\mathfrak{S}_n$ dans $\lb\Gamma\rb$.

Pour obtenir un plongement de $\bigoplus \mathfrak{S}_n$, on choisit simultan\'ement $x_i^{(n)}$, $1\le i\le n<\infty$ deux \`a deux distincts et, \`a $n$ fix\'e, dans la m\^eme orbite, et on choisit les $U_i^{(n)}$ \'egalement deux \`a deux disjoints. Comme les $\rho^{(n)}(\mathfrak{S}_n)$ ont des supports deux \`a deux disjoints, ils commutent deux \`a deux et engendrent leur produit direct restreint.

Pour la derni\`ere assertion, on utilise le fait qu'il n'existe aucune loi de groupe satisfaite simultan\'ement par tous les groupes sym\'etriques: en effet une telle loi serait satisfaite par tous les groupes finis, donc par les produits directs de groupes finis. Or les groupes libres, \'etant r\'esiduellement finis \cite[p.~170]{Sch}, se plongent dans de tels produits.
\end{proof}

La preuve d\'emontre en fait \'egalement

\begin{cor}\label{cor_trans}
Si $X$ est totalement s\'epar\'e, alors $\lb\Gamma\rb$ agit $\infty$-transitivement sur toute $\Gamma$-orbite, au sens o\`u il agit transitivement, pour tout $n$, sur les $n$-uplets d'\'el\'ements distincts. 
De plus, le sous-groupe d\'eriv\'e $\lb\Gamma\rb'$ agit $(n-2)$-transitivement sur toute \mbox{$\Gamma$-orbite} de cardinal au moins $n$, et donc $\infty$-transitivement sur toute orbite infinie.

En particulier, les $\lb\Gamma\rb'$-orbites co\"\i ncident avec les $\Gamma$-orbites (qui sont aussi les $\lb\Gamma\rb$-orbites), \`a l'exception des $\Gamma$-orbites \`a deux \'el\'ements.

En outre, si la r\'eunion des $\Gamma$-orbites de cardinal au plus 4 est d'int\'erieur vide, alors le commutant de $\lb\Gamma\rb'$ dans le groupe des permutations de $X$ (et donc dans $\Homeo(X)$) est trivial.
\end{cor}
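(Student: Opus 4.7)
Les assertions (1), (2) et (3) sont essentiellement algébriques et se déduisent de la construction $\rho\colon\mathfrak{S}_n\hookrightarrow\lb\Gamma\rb$ établie dans la preuve du fait \ref{fait_sn}: comme $\Alt_n=\mathfrak{S}_n'$, la restriction $\rho|_{\Alt_n}$ prend ses valeurs dans $\lb\Gamma\rb'$ et agit comme $\Alt_n$ sur le $n$-uplet choisi. Pour obtenir (1) ou (2), on applique $\rho$ à un $n$-uplet englobant source et but (quitte à le compléter par des points additionnels de l'orbite), puis on invoque la $n$-transitivité de $\mathfrak{S}_n$ (resp.\ la $(n-2)$-transitivité de $\Alt_n$). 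La partie (3) en est le cas $n=3$, et l'exception pour les orbites de cardinal $2$ vient de $\Alt_2=\{e\}$.

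Pour établir (4), soit $h\in\mathrm{Sym}(X)$ commutant avec $\lb\Gamma\rb'$. La première étape serait de montrer que $h$ fixe chaque point de la partie dense $Y\subseteq X$ formée des points dont la $\Gamma$-orbite a cardinal $\geq 5$. Pour un tel $y$, notons $N_y$ son stabilisateur dans $\lb\Gamma\rb'$; alors $h(y)\in\mathrm{Fix}(N_y)$, puisque $\tau h(y)=h\tau(y)=h(y)$ pour $\tau\in N_y$. Par (2), $\lb\Gamma\rb'$ est $3$-transitif sur $O_y=\Gamma\cdot y$, donc $N_y$ est transitif sur $O_y\setminus\{y\}$ (de cardinal $\geq 4$). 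Pour toute autre $\Gamma$-orbite $O'$ de cardinal $\geq 3$, des $3$-cycles issus de $\rho(\Alt_3)\subseteq\lb\Gamma\rb'$, construits sur des triplets de $O'$ avec clouverts disjoints évitant $y$, sont dans $N_y$ et engendrent un sous-groupe agissant comme $\Alt(O')$ sur $O'$, donc transitivement. Ainsi $\mathrm{Fix}(N_y)\subseteq\{y\}\cup X_{\leq 2}$; comme $h$ préserve le cardinal des $\lb\Gamma\rb'$-orbites et que celle de $y$ est $\geq 5$, on a $h(y)\notin X_{\leq 2}$, d'où $h(y)=y$.

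L'étape cruciale pour passer de $Y$ à $X$ est l'observation suivante: étant donné $y_1\in Y$, quatre autres points $y_2,\ldots,y_5$ de $\Gamma\cdot y_1$ (avec $y_i=g_iy_1$, $g_1=e$) et un clouvert $U\ni y_1$ tel que les $g_iU$ soient deux à deux disjoints, les supports $\bigsqcup_{\sigma(i)\neq i}g_iU$ des $T_\sigma\in\lb\Gamma\rb'$ pour $\sigma\in\Alt_5$ sont préservés par $h$ (compléments d'ensembles de points fixes, eux-mêmes préservés). Des intersections booléennes judicieuses isolent alors chaque $g_iU$ individuellement --- par exemple $\Supp(T_{(123)})\cap\Supp(T_{(145)})=g_1U$, et similairement pour chaque $g_iU$ en combinant deux $3$-cycles de supports ne partageant qu'un seul $g_iU$ --- de sorte que $h(g_iU)=g_iU$, et en particulier $h(U)=U$.

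Pour $y\in X$ quelconque et un voisinage clouvert $W$ de $y$, il reste à produire un tel $U$ contenant $y$ et inclus dans $W$: par densité de $Y$ on choisit $y_1\in W\cap Y$ proche de $y$, puis un clouvert $U\subseteq W$ contenant $\{y,y_1\}$ avec les $g_iU$ deux à deux disjoints; on en déduit $h(y)\in U\subseteq W$, puis $h(y)=y$ en laissant $W$ parcourir une base de voisinages. L'obstacle technique principal est la condition de disjonction des $g_iU$ lorsque $y\in U$ appartient à une $\Gamma$-orbite peu nombreuse: elle exige que les $g_j^{-1}g_i$ ($i\neq j$) ne stabilisent pas $y$, donc un choix des $g_i$ dans des classes distinctes modulo $\mathrm{Stab}_\Gamma(y)$. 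C'est ici qu'intervient l'hypothèse d'intérieur vide de $X_{\leq 4}$, qui assure la disponibilité locale arbitraire de $y_1\in Y$ dont l'orbite est suffisamment riche pour rendre compatibles les choix de $g_i$'s requis par les deux contraintes (distinction des $y_i=g_iy_1$ dans $\Gamma\cdot y_1$, et distinction des $g_iy$ dans $\Gamma\cdot y$).
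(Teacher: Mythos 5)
Vos points (1)--(3) suivent la m\^eme ligne que le texte (plongement de $\mathfrak{S}_n$ du fait \ref{fait_sn}, restriction \`a $\Alt_n$), et votre d\'emonstration du fait que $h$ fixe la partie dense $Y$ des points d'orbite de cardinal $\ge 5$ est une variante correcte et l\'eg\`erement diff\'erente: vous passez par le stabilisateur $N_y$ et par la pr\'eservation du cardinal des $\lb\Gamma\rb'$-orbites, l\`a o\`u le texte fait pr\'eserver au commutant les supports $U\cup gU\cup hU$ de 3-cycles et intersecte sur $U$ pour obtenir l'invariance de $\{x,gx,hx\}$, puis de $\{x\}$. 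Jusque-l\`a, rien \`a redire.

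En revanche, votre {\og \'etape cruciale\fg} de passage de $Y$ \`a $X$ comporte une lacune r\'eelle, que vous signalez vous-m\^eme sans la combler. Si la $\Gamma$-orbite de $y$ a un cardinal $k\le 4$, il n'existe \emph{aucun} choix de $g_1,\dots,g_5\in\Gamma$ et de clouvert $U\ni y$ rendant les $g_iU$ deux \`a deux disjoints: les $g_iy$ seraient cinq points distincts d'une orbite \`a $k$ \'el\'ements. La richesse de l'orbite du point auxiliaire $y_1$ n'y change rien, puisque l'obstruction vit dans $\Gamma/\mathrm{Stab}_\Gamma(y)$, et l'hypoth\`ese d'int\'erieur vide n'\'elimine pas ces points, elle rend seulement leur compl\'ementaire dense. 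Cette lacune n'est d'ailleurs pas r\'eparable en toute g\'en\'eralit\'e pour le groupe de \emph{toutes} les permutations: $\lb\Gamma\rb'$ agit trivialement sur la r\'eunion $X_{\le 2}$ des orbites de cardinal au plus 2 (la restriction \`a une telle orbite est un morphisme vers un groupe ab\'elien), donc toute permutation de $X$ support\'ee par $X_{\le 2}$ centralise $\lb\Gamma\rb'$; d\`es que $X_{\le 2}$ a au moins deux points tout en \'etant d'int\'erieur vide --- par exemple les deux bouts $\pm\infty$ du sous-d\'ecalage $Y=\Z\cup\{\pm\infty\}$ du \S\ref{enm} --- ce commutant est non trivial. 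L'\'enonc\'e doit donc se lire pour $\Homeo(X)$ (ou sous l'hypoth\`ese $\#X_{\le 2}\le 1$), auquel cas votre premi\`ere \'etape conclut d\'ej\`a: l'ensemble des points fixes d'un hom\'eomorphisme est ferm\'e et contient la partie dense $Y$, donc vaut $X$ --- c'est essentiellement l\`a que s'arr\^ete aussi la preuve du texte.
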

\begin{proof}
En effet, on a v\'erifi\'e que pour toute partie $P$ \`a $n$ \'el\'ements de toute orbite, il existe une action du groupe sym\'etrique $\mathfrak{S}_n$, via $\lb\Gamma\rb$, qui induit l'action standard sur $P$. 

En particulier, son sous-groupe d\'eriv\'e, le groupe altern\'e $\mathfrak{A}_n$, agit via $\lb\Gamma\rb'$, qui agit donc $(n-2)$-transitivement.

Pour le commutant, on remarque que si on a trois points $x,gx,hx$ distincts dans une m\^eme $\Gamma$-orbite, alors pour tout voisinage clouvert $U$ de $x$ assez petit, on a un \'el\'ement de $\lb\Gamma\rb'$ permutant cycliquement les clouverts disjoints $U$, $gU$ et $hU$ et agissant par l'identit\'e ailleurs. En particulier, le commutant $C$ de $\lb\Gamma\rb'$ pr\'eserve $U\cup gU\cup hU$. Comme c'est vrai pour tout voisinage clouvert de $x$ inclus dans $U$, on en d\'eduit que $C$ laisse invariant $\{x,gx,hx\}$. Maintenant s'il existe deux autres points $g'x,h'x$ dans l'orbite, le m\^eme argument montre que $C$ laisse invariant $\{x,g'x,h'x\}$ et donc laisse invariant l'intersection de ces deux parties, \`a savoir $\{x\}$. Comme par hypoth\`ese l'ensemble des $x$ dont l'orbite a au moins 5 \'el\'ements est dense, cela prouve que $C$ est r\'eduit \`a l'identit\'e.
\end{proof}

Ce fait implique \'egalement, par exemple, le corolaire suivant.

\begin{cor}
Sous les hypoth\`eses du fait \ref{fait_sn}, $\lb\Gamma\rb$  n'admet aucune repr\'e\-sentation fid\`ele dans $\GL_d(A)$, o\`u $A$ est un anneau commutatif quelconque et $d$ un entier arbitraire. Il en est de m\^eme pour $\lb\Gamma\rb'$.
\end{cor}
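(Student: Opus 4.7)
Le plan est de d\'eduire une contradiction d'un plongement hypoth\'etique $\rho: \lb\Gamma\rb' \hookrightarrow \GL_d(A)$ (et de m\^eme pour $\lb\Gamma\rb$) en exhibant, pour chaque paire $(d,A)$, un sous-groupe simple de $\lb\Gamma\rb'$ qui ne peut se plonger dans $\GL_d(A)$. Les ingr\'edients d\'ej\`a \'etablis, \`a savoir le fait \ref{fait_sn} combin\'e au corollaire \ref{cor_trans}, fournissent, pour tout $n \geq 5$, une copie du groupe altern\'e $\mathfrak{A}_n$ dans $\lb\Gamma\rb'$ (et dans $\lb\Gamma\rb$, via le plongement $\mathfrak{A}_n \hookrightarrow \mathfrak{S}_n \hookrightarrow \lb\Gamma\rb$); il suffit donc de prouver le lemme suivant: pour tout $d$ et tout anneau commutatif $A$, il existe $n$ tel que $\mathfrak{A}_n$ ne se plonge pas dans $\GL_d(A)$.

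Pour \'etablir ce lemme, on se ram\`ene d'abord au cas d'un corps par une succession de r\'eductions standards. On commence par remplacer $A$ par le sous-anneau $B$ engendr\'e sur $\Z$ par les coefficients des matrices $\rho(g)^{\pm 1}$ pour $g \in \mathfrak{A}_n$; cela donne $\mathfrak{A}_n \hookrightarrow \GL_d(B)$ avec $B$ une $\Z$-alg\`ebre de type fini, donc noeth\'erienne par le th\'eor\`eme de la base de Hilbert. On quotiente ensuite par le nilradical $\mathfrak{n}$ de $B$ (qui est nilpotent en tant qu'id\'eal d'un anneau noeth\'erien r\'eduit): le noyau du morphisme de r\'eduction $\GL_d(B) \to \GL_d(B/\mathfrak{n})$ est alors un groupe nilpotent (filtr\'e par les $1 + M_d(\mathfrak{n}^k)$), donc son intersection avec $\mathfrak{A}_n$ (simple non ab\'elien pour $n \geq 5$) est triviale. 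Comme $B/\mathfrak{n}$ est r\'eduit noeth\'erien, il se plonge dans le produit fini $\prod_i B/\mathfrak{p}_i$ sur ses id\'eaux premiers minimaux; par simplicit\'e de $\mathfrak{A}_n$, l'une des projections est n\'ecessairement injective, fournissant un plongement dans $\GL_d(B/\mathfrak{p}_i)$ avec $B/\mathfrak{p}_i$ int\`egre. En passant au corps des fractions $K$ de ce dernier, on obtient finalement $\mathfrak{A}_n \hookrightarrow \GL_d(K)$ pour un certain corps $K$.

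L'\'etape finale, et principale difficult\'e conceptuelle, consiste \`a invoquer la borne classique suivante: pour $n \geq 7$ et tout corps $K$, toute repr\'esentation fid\`ele de $\mathfrak{A}_n$ sur $K$ est de dimension au moins $n-2$, uniform\'ement en la caract\'eristique de $K$ (c'est un r\'esultat bien connu de la th\'eorie des repr\'esentations modulaires des groupes sym\'etriques et altern\'es, la dimension minimale \'etant m\^eme $n-1$ lorsque la caract\'eristique ne divise pas $n$). Il fournit la contradiction d\`es qu'on choisit $n > d+2$. L'uniformit\'e en la caract\'eristique du corps est l'ingr\'edient non trivial, les \'etapes pr\'ec\'edentes n'\'etant que de l'alg\`ebre commutative \'el\'ementaire utilisant la simplicit\'e de $\mathfrak{A}_n$.
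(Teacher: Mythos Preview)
Your proof is correct and follows the same overall architecture as the paper's: embed alternating groups $\mathfrak{A}_n$ into $\lb\Gamma\rb'$ via the fait~\ref{fait_sn}, reduce from a general commutative ring to a field using the nilradical (the kernel of reduction being nilpotent, hence meeting a simple non-abelian group trivially), and then bound the minimal faithful dimension of $\mathfrak{A}_n$ over a field.

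The genuine difference lies in this last step. You invoke the classical modular-representation-theory bound (the minimal faithful dimension of $\mathfrak{A}_n$ over any field is at least $n-2$ for $n\ge 7$), which is correct but is a nontrivial black box whose proof requires real work in characteristic~$p$. The paper instead uses an entirely elementary trick due to Abert: for $k=\max(5,4d^2)$, the group $\mathfrak{A}_k$ contains $d^2$ pairwise-commuting pairs of non-commuting elements (take disjoint $4$-element supports), and one checks directly that an injective linear representation must send these $2d^2$ elements to a linearly independent family in the matrix algebra, forcing the dimension to exceed~$d$. This argument is self-contained and uniform in the characteristic without appealing to any representation-theoretic classification. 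Your route gives a sharper value of $n$ (roughly $d$ rather than $4d^2$), while the paper's route avoids any external reference. A minor presentational point: your reduction fixes $\rho$ and $n$ before announcing that ``il existe $n$''; it would read more cleanly to state upfront that you prove the stronger claim that $\mathfrak{A}_n$ admits no embedding into $\GL_d(A)$ for any commutative $A$ once $n>d+2$.
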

\begin{proof}
On fixe $k$ assez grand, disons $k=k(d)=\max(5,4d^2)$. Montrons que pour tout anneau commutatif $A$, il n'y a aucun morphisme non trivial de $\Alt_{k(d)}$ vers $\GL_d(A)$. On remarque d'abord que le groupe altern\'e $\Alt_{k}$ contient $d^2$ paires d'\'el\'ements, chacune des paires ne commutant pas, mais commutant avec toutes les autres. Il en d\'ecoule ais\'ement (voir \cite{Abert}, \`a qui cette astuce est due) que tout morphisme injectif de $\Alt_{k}$ dans $\GL_m$ d'un corps envoie ces $2d^2$ \'el\'ements sur une famille lin\'eairement libre, impliquant en particulier que $m>d$. Donc, par simplicit\'e, tout morphisme de $\Alt_{k}$ dans $\GL_d$ d'un corps est trivial.

Comme tout anneau commutatif r\'eduit se plonge dans un produit de corps, on en d\'eduit que tout morphisme de $\Alt_{k}$ dans $\GL_d(A)$, pour $A$ anneau commutatif, a son image dans le noyau de $\GL_d(A)\to\GL_d(A/N)$, o\`u $N$ est le nilradical de $A$. Or ce noyau est un groupe localement nilpotent, puisqu'il est nilpotent quand $A$ est un anneau commutatif de type fini ou plus g\'en\'eralement noeth\'erien. Donc l'image du groupe simple non ab\'elien $\Alt_k$ dans $\GL_d(A)$ est en fait triviale.

Les sous-groupes $\Alt_{k}$ \'etant simples non ab\'eliens, ils sont \'evidemment inclus dans $\lb\Gamma\rb'$ et l'argument s'applique donc aussi \`a ce dernier.
\end{proof}

\begin{fact}\label{fpart}
On suppose qu'on a une partition de $X$ (en parties appel\'ees composantes). Si la partition est invariante par $\lb\Gamma\rb$, alors toute $\Gamma$-orbite est soit compos\'ee d'atomes (composantes r\'eduites \`a un point), soit incluse dans une seule composante. Sous l'hypoth\`ese plus faible que la partition est invariante par $\lb\Gamma\rb'$, la m\^eme assertion est vraie pour toute orbite de cardinal distinct de 2.
\end{fact}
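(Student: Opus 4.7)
Le plan est de fixer une $\Gamma$-orbite $\mathcal{O}$ non incluse dans une seule composante et de d\'emontrer que chacun de ses points est un atome. Pour cela je prendrai $a,b\in\mathcal{O}$ dans des composantes distinctes $C_a\ne C_b$, je supposerai par l'absurde qu'il existe $w\in C_a$ avec $w\ne a$, et je construirai $T\in\lb\Gamma\rb$ (resp.\ $T\in\lb\Gamma\rb'$) tel que $T(a)=b$ et $T(w)\notin C_b$. L'invariance de la partition donnera alors $T(C_a)=C_{T(a)}=C_b$, d'o\`u $T(w)\in C_b$, contradiction; on en d\'eduira $C_a=\{a\}$. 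L'assertion g\'en\'erale suivra en appliquant le raisonnement \`a chaque $z\in\mathcal{O}$ apr\`es avoir choisi $z'\in\mathcal{O}$ hors de $C_z$ (au moins l'un des points $a,b$ convient toujours).

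Pour la premi\`ere assertion (partition $\lb\Gamma\rb$-invariante), $T$ sera la transposition construite comme dans la preuve du fait \ref{fait_sn}: \'ecrivant $b=ga$ avec $g\in\Gamma$, je choisirai un clouvert $U\ni a$ assez petit pour que $U$ et $gU$ soient disjoints et \'evitent tous deux $w$ (possible car $w\ne a$ et $w\ne b=ga$), puis je poserai $T(x)=gx$ sur $U$, $T(x)=g^{-1}x$ sur $gU$, et $T(x)=x$ ailleurs. On a alors imm\'ediatement $T(a)=b$ et $T(w)=w\in C_a$, d'o\`u la contradiction.

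Pour la seconde assertion (partition $\lb\Gamma\rb'$-invariante) avec $|\mathcal{O}|\ne 2$, le cas $|\mathcal{O}|=1$ est trivial; je suppose donc $|\mathcal{O}|\ge 3$. D'apr\`es le corolaire \ref{cor_trans}, toute permutation paire d'un nombre fini de points d'une m\^eme orbite se r\'ealise par un \'el\'ement de $\lb\Gamma\rb'$ \`a support dans des voisinages clouverts disjoints. Je distinguerai deux sous-cas. Si l'on peut choisir $c\in\mathcal{O}\setminus\{a,b\}$ avec $c\ne w$ (cas g\'en\'erique, qui se produit d\`es que $|\mathcal{O}|\ge 4$ ou que $w\notin\mathcal{O}$), j'utiliserai un 3-cycle $(a,b,c)$ port\'e par des clouverts \'evitant $w$; alors $T(a)=b$ et $T(w)=w$, et la contradiction suit. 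Sinon, $\mathcal{O}=\{a,b,w\}$, et j'utiliserai le 3-cycle $(a,b,w)$: on obtient $T(w)=a\in C_a$, donc $T(w)\notin C_b$, ce qui contredit encore $T(C_a)=C_b$.

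La principale difficult\'e est le dernier sous-cas o\`u $|\mathcal{O}|=3$ et $w\in\mathcal{O}$: comme on ne peut pas \'eviter $w$ parmi les points mobiles, l'argument ne passe plus par la formule $T(w)=w$ mais par le calcul explicite de $T(w)$. Ce basculement est \'el\'ementaire mais impose de distinguer avec soin les configurations; il explique en particulier pourquoi le cas $|\mathcal{O}|=2$ doit \^etre exclu dans la seconde assertion, aucune permutation paire non triviale sur deux points n'existant.
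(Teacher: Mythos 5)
Your proof is correct and takes essentially the same route as the paper: a transposition (resp.\ an even 3-cycle) supported on small clopen neighbourhoods that sends $a$ to $b$ while fixing the extra point $w$, with the same degenerate sub-case ($|\mathcal{O}|=3$ and $w\in\mathcal{O}$) handled by computing the image of $w$ under the 3-cycle $(a,b,w)$. Your write-up is in fact slightly more explicit than the paper's (which contains a couple of notational slips) in tracking which point plays which role.
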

\begin{proof}
On se donne une $\Gamma$-orbite non r\'eduite \`a un singleton (l'\'enonc\'e \'etant trivial pour un point fixe). Si par l'absurde elle ne v\'erifie pas la conclusion, elle contient deux points $x,y$ dans deux composantes distinctes, telle que la composante de $x$ contient un autre point $w$ de $X$.
Le fait \ref{fait_sn} fournit dans $\lb\Gamma\rb$ un \'el\'ement fixant $w$ et \'echangeant $y$ et $z$; cet \'el\'ement ne pr\'eserve pas la partition, contradiction.

Dans le cas d'une partition $\lb\Gamma\rb'$-invariante, le m\^eme raisonnement (pour une orbite \`a au moins 3 \'el\'ements) aboutit \`a l'existence de points $w,x,y$ v\'erifiant les m\^emes hypoth\`eses, et d'un point suppl\'ementaire $z$ dans l'orbite de $x$. Si $x\neq z$, le fait \ref{fait_sn} fournit dans $\lb\Gamma\rb'$ un \'el\'ement fixant $w$ et permutant cycliquement $x,y,z$; si $x=z$, il fournit dans $\lb\Gamma\rb'$ un \'el\'ement permutant cycliquement $w,x,y$; dans les deux cas, l'\'el\'ement d'ordre 3 obtenu ne pr\'eserve pas la partition.
\end{proof}

\section{Propri\'et\'es alg\'ebriques des groupes pleins-\penalty0 topologiques sur $\Z$}\label{pa}

\subsection{Simplicit\'e du groupe d\'eriv\'e}\label{simplicite}

On montre ici que si $\vpi$ est un auto\-hom\'eo\-mor\-phisme minimal d'un compact totalement s\'epar\'e $X$, alors le groupe d\'eriv\'e $\lb\vpi\rb'$ est simple. Ce r\'esultat est d\^u \`a Matui \cite{Ma1}. Une preuve plus directe en a \'et\'e donn\'ee par Bezuglyi et Medynets \cite{BeM}, que l'on va suivre. (On dira cependant plus loin quelques mots sur l'approche de Matui.)

La m\'ethode de Bezuglyi-Medynets consiste \`a montrer directement que pour tout $f\in\lb\vpi\rb$ non trivial, le commutateur de deux \'el\'ements \`a petit support (en un sens li\'e aux probabilit\'es $\vpi$-invariantes sur $X$) appartient au sous-groupe distingu\'e engendr\'e par $f$.
Le r\'esultat de simplicit\'e en d\'ecoule, une fois qu'on sait exprimer tout \'el\'ement comme produit d'\'el\'ements \`a petit support. Ceci utilise la notion de fonction de premier retour, d\'efinie ci-dessous.

Soit $X$ un espace topologique et soit $\psi$ un auto\-hom\'eo\-mor\-phisme. On dit que $A\subset X$ est $\psi$-{\em omniscient} si $\bigcup_{n\in\Z}\psi^n(A)=X$, ou autrement dit, si l'ensemble
$$Z(x)=Z(A,\psi,x)=\{n\in\Z:\psi^n(x)\in A\}$$
est non vide pour tout $x\in X$.

\begin{fact}
On suppose $X$ compact. Si $A\subset X$ est un ouvert $\psi$-omniscient, alors pour tout $x\in X$,
l'ensemble $Z(x)$ n'est ni minor\'e ni major\'e.
\end{fact}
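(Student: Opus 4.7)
Le plan est de convertir l'hypoth\`ese ponctuelle d'omniscience en une borne uniforme gr\^ace \`a la compacit\'e de $X$. Comme $A$ est ouvert et que $X=\bigcup_{n\in\Z}\psi^n(A)$ par d\'efinition de l'omniscience, la famille $(\psi^n(A))_{n\in\Z}$ est un recouvrement ouvert de $X$. La compacit\'e fournit un sous-recouvrement fini $X=\psi^{n_1}(A)\cup\cdots\cup\psi^{n_k}(A)$. En posant $N=\max_i|n_i|$, cela se reformule ainsi: pour tout $y\in X$, il existe $n\in[-N,N]\cap\Z$ tel que $\psi^n(y)\in A$, c'est-\`a-dire $Z(y)\cap[-N,N]\neq\emptyset$.

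Supposons par l'absurde que $Z(x)$ soit major\'e pour un certain $x\in X$, disons par un entier $M$. J'applique alors la borne uniforme au point $y=\psi^{M+N+1}(x)$: il existe $n\in[-N,N]$ avec $\psi^n(y)=\psi^{n+M+N+1}(x)\in A$, autrement dit $n+M+N+1\in Z(x)$. Par hypoth\`ese de majoration, $n+M+N+1\le M$, soit $n\le -N-1$, ce qui contredit $n\ge -N$. Le m\^eme argument, appliqu\'e \`a $y=\psi^{m-N-1}(x)$ avec $m$ un minorant hypoth\'etique de $Z(x)$, exclut la minoration.

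L'\'etape d\'ecisive est la r\'eduction \`a la borne uniforme via compacit\'e; elle utilise essentiellement que $A$ soit ouvert (sinon on n'obtient pas un recouvrement ouvert) et que $X$ soit compact. Une fois l'entier $N$ obtenu, la contradiction est purement arithm\'etique et ne pr\'esente aucune difficult\'e.
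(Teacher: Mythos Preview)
Your proof is correct. Both arguments exploit compactness, but via its two dual formulations. The paper's proof uses the finite intersection property: it considers the closed sets $M_k=\{x:\sup Z(x)\le k\}$, observes $\psi(M_k)=M_{k-1}$ and $M_k\subset M_{k+1}$, and derives a contradiction from $\bigcap_k M_k\neq\emptyset$. You instead use the open-cover formulation, extracting a finite subcover from $(\psi^n(A))_{n\in\Z}$ to obtain a uniform $N$ such that every $Z(y)$ meets $[-N,N]$. Your route is slightly more constructive: it shows directly that $Z(x)$ is syndetic with gap at most $2N+1$, a quantitative statement the paper's argument does not make explicit. The paper's version is a touch more economical in that it avoids any arithmetic manipulation, but the two are of comparable length and difficulty.
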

\begin{proof}
Par hypoth\`ese, $Z(x)$ est non vide pour tout $x$. Soit $M_k$ l'ensemble des $x$ pour qui $\sup(Z(x))\le k$. C'est un ferm\'e; si on suppose par l'absurde qu'il est non vide pour au moins un $k$, il est en fait non vide pour tout $k$ puisque $\psi(M_k)=M_{k-1}$. En outre, $M_k\subset M_{k+1}$ pour tout $k$, donc par compacit\'e on obtient $\bigcap_k M_k\neq\emptyset$, ce qui est une contradiction. Ceci prouve que $Z(x)$ n'est jamais major\'e; pour la m\^eme raison il n'est jamais minor\'e.
\end{proof}

Ceci permet de d\'efinir, si $A$ est $\psi$-omniscient, la fonction 
$$\psi_A(x)=\psi^{k(x)}(x),\quad{\text{o\`u}}\quad k(x)=k_{A,\psi}(x) =	\left\{ \begin{array}{cl} \inf(Z(x)\cap \mathopen]0,+\infty\mathclose[) & \mbox{si } x\in A\,; \\ 0 & \mbox{sinon}.
\end{array}\right.$$

La fonction $\psi_A$ est appel\'ee {\em fonction de premier retour de $\psi$ sur} $A$; c'est un auto\-hom\'eo\-mor\-phisme, de r\'eciproque $(\psi^{-1})_A$. Si $A$ est clouvert, il est imm\'ediat que la fonction $k$ est continue (c'est-\`a-dire localement constante), si bien que $\psi_A\in\lb\psi\rb$ et donc $\psi_A\in\lb\vpi\rb$ si $\psi\in\lb\vpi\rb$ pour un auto\-hom\'eo\-mor\-phisme $\vpi$ pr\'ealablement fix\'e.

\begin{fact}\label{fact_pe}
Si $X$ est compact, $\psi$ est un auto\-hom\'eo\-mor\-phisme, et $A$ est un clouvert $\psi$-omniscient, alors $\psi^{-1}\psi_A$ est p\'eriodique.
\end{fact}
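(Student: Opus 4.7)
Je me propose d'analyser $g := \psi^{-1}\psi_A$ via la \emph{tour de Kakutani} au-dessus de $A$. Premi\`ere \'etape: je montre que le temps de premier retour $k = k_{A,\psi}$ est localement constant sur $A$ (pour $x \in A$ de temps de retour $n$, un voisinage clouvert $U \subset A$ assez petit v\'erifie $\psi^n(U) \subset A$ et $\psi^i(U) \cap A = \emptyset$ pour $0 < i < n$, $A$ \'etant clouvert et $\psi$ un hom\'eo\-mor\-phisme), donc born\'e par un entier $N$ sur $A$, ferm\'e dans le compact $X$.

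Ensuite je d\'ecompose $X$ en \og tours\fg{} au-dessus de $A$: pour $x \in A$, je pose $T_x = \{x, \psi(x), \dots, \psi^{k(x)-1}(x)\}$. Tout $y \in X$ s'\'ecrit de mani\`ere unique $y = \psi^i(x)$ avec $x \in A$ et $0 \le i < k(x)$: il suffit de prendre $x := \psi^m(y)$ o\`u $m$ est le plus grand entier $\le 0$ tel que $\psi^m(y) \in A$ (un tel $m$ existe puisque $Z(A,\psi,y)$ n'est pas minor\'e, par le fait pr\'ec\'edent), et l'in\'egalit\'e $-m < k(x)$ d\'ecoule de la maximalit\'e de $m$. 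Les tours $T_x$, pour $x$ parcourant un syst\`eme de repr\'esentants, partitionnent donc $X$.

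Il ne reste qu'\`a calculer $g$ sur une telle tour. Pour $y = \psi^i(x) \in T_x$: si $i = 0$, alors $y = x \in A$ et $g(x) = \psi^{-1}\psi^{k(x)}(x) = \psi^{k(x)-1}(x)$; si $1 \le i < k(x)$, alors $y = \psi^i(x) \notin A$ et $g(y) = \psi^{-1}(y) = \psi^{i-1}(x)$. On en d\'eduit que $g$ permute cycliquement $T_x$ selon $x \mapsto \psi^{k(x)-1}(x) \mapsto \cdots \mapsto \psi(x) \mapsto x$, d'ordre exactement $k(x) \le N$; par cons\'equent $g^{N!} = \mathrm{id}$, et $\psi^{-1}\psi_A$ est bien p\'eriodique. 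L'\'etape la plus d\'elicate est la mise en place propre de la d\'ecomposition en tours; le reste est alors m\'ecanique.
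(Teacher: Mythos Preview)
Your proof is correct and is essentially the same argument as the paper's: the paper defines $u(x)=\sup(Z(x)\cap\,]{-\infty},0])$ and $v(x)=\inf(Z(x)\cap\,]0,+\infty[)$ and observes directly that $(\psi^{-1}\psi_A)^{v(x)-u(x)}x=x$ with $v(x)-u(x)\le\sup k$, which is exactly your cyclic permutation of the tower $T_{\psi^{u(x)}(x)}$ of height $v(x)-u(x)=k(\psi^{u(x)}(x))$. You have simply unfolded the Kakutani--Rokhlin picture that the paper leaves implicit; note also that the boundedness of $k$ was already recorded just before the statement (``la fonction $k$ est continue''), so you need not reprove it.
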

\begin{proof}
Posons $v(x)=\inf(Z(x)\cap\penalty0\mathopen]0,+\infty\mathclose[)$ et $u(x)=\sup(Z(x)\cap\penalty0\mathopen]{-\infty},0])$. Remarquons que $$1\le v-u\le\sup k\quad\text{et}\quad(\psi^{-1}\psi_A)^{v(x)-u(x)}x=x$$ pour tout $x$. Donc $(\psi^{-1}\psi_A)^n=1$, o\`u $n=(\sup k)!$.
\end{proof}

\begin{lem}\label{l:domaine}
Soit $f$ un auto\-hom\'eo\-mor\-phisme d'un espace compact totalement s\'epar\'e $X$, et $n\ge 1$ un entier. On suppose que $f^i$ est sans point fixe pour tout $1\le i\le n-1$. Alors il existe un clouvert $U$ tel que les $(f^i(U))$, pour $0\le i\le n-1$, sont deux \`a deux disjoints et $\bigcup_{i\in\Z}f^i(U)=X$.

En particulier, si $f^n=1$ et si $f$ d\'efinit une action libre de $\Z/n\Z$, alors les $(f^i(U))$, pour $0\le i\le n-1$, forment une partition de $X$. 
\end{lem}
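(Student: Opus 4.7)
Le plan est une construction de type Rokhlin-Kakutani, adapt\'ee au cadre clouvert gr\^ace \`a la dimension z\'ero de $X$. Pour tout $x\in X$, l'hypoth\`ese que $f,f^2,\dots,f^{n-1}$ sont sans point fixe entra\^\i ne que les points $x,f(x),\dots,f^{n-1}(x)$ sont deux \`a deux distincts. Comme $X$ est compact et totalement s\'epar\'e, on peut les s\'eparer par des clouverts $W_0,\dots,W_{n-1}$ deux \`a deux disjoints, puis poser $V_x=\bigcap_{i=0}^{n-1}f^{-i}(W_i)$\,: c'est un voisinage clouvert de $x$ dont les $n$ premiers it\'er\'es $V_x,f(V_x),\dots,f^{n-1}(V_x)$ sont deux \`a deux disjoints. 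Par compacit\'e, un nombre fini $V_1,\dots,V_m$ de tels clouverts recouvrent $X$.

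On construit alors $U$ par \'elagage inductif\,: on pose $U_1=V_1$ et, pour $j\ge 2$,
\[U_j=V_j\setminus\bigcup_{k<j}\bigcup_{i=-(n-1)}^{n-1}f^i(U_k),\qquad U=\bigcup_{j=1}^m U_j.\]
Chaque $U_j$ est clouvert (diff\'erence finie de clouverts, puisque $f$ est un hom\'eo\-mor\-phisme), et $U$ l'est aussi.

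Il reste \`a v\'erifier les deux propri\'et\'es cherch\'ees. Pour la disjonction, si $x\in U_j$ et $f^k(x)\in U_{j'}$ avec $1\le k\le n-1$, le cas $j=j'$ est exclu car alors $x,f^k(x)\in V_j$ contredit la disjonction des $f^i(V_j)$\,; les cas $j<j'$ et $j>j'$ sont tous deux exclus par la d\'efinition de $U_{\max(j,j')}$, puisque $k$ et $-k$ appartiennent \`a $\{-(n-1),\dots,n-1\}$. Pour le recouvrement, soit $x\in X$ et soit $j$ minimal tel que $x\in V_j$\,: ou bien $x\in U_j\subset U$, ou bien $x\in f^i(U_k)$ pour un $k<j$ et un $|i|\le n-1$, et dans tous les cas $x\in\bigcup_{i\in\Z}f^i(U)$. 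La derni\`ere assertion en d\'ecoule\,: lorsque $f^n=1$, on a $\bigcup_{i\in\Z}f^i(U)=\bigcup_{i=0}^{n-1}f^i(U)$, donc les $f^i(U)$, $0\le i\le n-1$, forment effectivement une partition.

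L'obstacle principal est la calibration de la largeur du {\og tampon\fg} utilis\'e dans l'\'elagage. La fen\^etre $\{-(n-1),\dots,n-1\}$ est \`a la fois la plus \'etroite rendant automatique la disjonction des $f^i(U)$, $0\le i\le n-1$, et la plus large laissant subsister le recouvrement par it\'eration\,: ce double r\^ole motive le choix exact de l'\'elagage, et c'est le point qui n\'ecessite le plus d'attention.
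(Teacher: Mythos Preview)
Your proof is correct and follows essentially the same Rokhlin--Kakutani construction as the paper: cover $X$ by finitely many clopens with disjoint first $n$ iterates, then prune each one by the $\{-(n-1),\dots,n-1\}$-iterates of its predecessors. The only cosmetic difference is that you subtract iterates of the already-pruned sets $U_k$ rather than of the original $V_k$, which makes your covering verification one step shorter (no induction needed); both variants work for the same reason.
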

\begin{proof}
Pour tout $x\in X$ on choisit un voisinage clouvert $V_x$ disjoint de $\bigcup_{1\le i\le n-1}f^i(V_x)$. On extrait du recouvrement $(V_x)$ un recouvrement fini $(W_k)_{1\le k\le m}$. On pose, pour $1\le k\le m$, $$X_k=W_k\smallsetminus \bigcup_{0<j<k,\;-n< i< n}f^i(W_j).$$ Alors $U=\bigcup_{1\le k\le m}X_k$ satisfait la condition demand\'ee.
\end{proof}

Rappelons que par moyennabilit\'e de $\Z$ (se refl\'etant ici par le th\'eor\`eme de Kakutani), tout auto\-hom\'eo\-mor\-phisme $\vpi$ d'un compact non vide $X$ pr\'eserve une probabilit\'e sur les bor\'eliens. Notons $\mathcal{M}_\vpi(X)$ l'ensemble des probabilit\'es bor\'eliennes $\vpi$-invariantes sur $X$; c'est un compact pour la topologie faible-* (convergence \'etroite). Si $B$ est un bor\'elien de $X$, notons $$L^+_\vpi(U)=\sup_{\mu\in \mathcal{M}_\vpi(X)}\mu(B);\quad L^-_\vpi(U)=\inf_{\mu\in \mathcal{M}_\vpi(X)}\mu(B).$$

\begin{lem}\label{decompo}
Soit $\vpi$ un auto\-hom\'eo\-mor\-phisme sans orbite finie du compact totalement s\'epar\'e $X$. Alors pour tout $\psi\in\lb\vpi\rb$ et $\eps>0$, il existe $m$ et $\psi_1,\dots,\psi_m$ dans $\lb\vpi\rb$ tels que $\psi=\psi_1\dots \psi_m$ et $L^+_\vpi(\Supp(g_i))<\eps$ pour tout $i$. Si de plus $\psi^n=1$ pour un entier $n\ge 1$, on peut s'arranger pour que $\psi_i^n=1$ pour tout $i=1,\dots,m$. 
\end{lem}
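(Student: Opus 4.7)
The plan is to use Lemma~\ref{l:domaine} applied to $\vpi$ itself to build a Kakutani--Rokhlin tower over a clopen base $U$ of arbitrarily small $\vpi$-invariant measure, and then decompose $\psi$ relative to this tower.

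Writing $\psi(x)=\vpi^{\kappa(x)}(x)$ with $\kappa:X\to\Z$ locally constant and $|\kappa|\le N$: since $\vpi$ has no finite orbit, each power $\vpi^i$ with $1\le i\le M-1$ is fixed-point free, so Lemma~\ref{l:domaine} provides, for any $M$, a clopen $U\subset X$ with $(U,\vpi U,\dots,\vpi^{M-1}U)$ pairwise disjoint and $\bigcup_{i\in\Z}\vpi^iU=X$. Any $\vpi$-invariant probability $\mu$ satisfies $M\mu(U)\le 1$, hence $L^+_\vpi(U)\le 1/M$. First-return times to $U$ then decompose $X$ as a finite disjoint union of columns $\bigsqcup_j\bigsqcup_{0\le i<h_j}\vpi^iB_j$ with $B_j\subset U$ clopen and $h_j\ge M$. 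Choosing $M$ much larger than $N/\eps$ and refining each column into consecutive horizontal slabs of height in $[3N,6N]$ yields a clopen partition $E_1,\dots,E_s$ of $X$ with $L^+_\vpi(E_a)\le 6N/M<\eps$; the key geometric feature is that, since $|\kappa|\le N$, the image $\psi(E_a)$ is contained in $E_a$ together with a bounded number of neighbouring slabs (the two vertical neighbours when $E_a$ lies in the interior of a column, or boundedly many slabs in the ``next'' column when $E_a$ touches its top).

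The second step is to exploit this locality. For each internal slab boundary the ``crossings'' of $\psi$ across it can be realised by an element of $\lb\vpi\rb$ supported on the union of the two adjacent slabs (of $L^+_\vpi$-measure $<2\eps$, which one absorbs by rescaling $\eps$ at the outset). Peeling such crossings one boundary at a time reduces $\psi$, after finitely many steps, to a product of commuting elements each preserving a single slab $E_a$ and hence supported in it, yielding the required decomposition $\psi=\psi_1\cdots\psi_m$.

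I expect the main obstacle to be the torsion refinement: the peeling above does not automatically yield factors of order dividing $n$. For this, I would abandon the $\vpi$-tower and instead apply Lemma~\ref{l:domaine} directly to $\psi$. For each $d\mid n$ the clopen locus $F_d=\{x:\psi^d(x)=x\}\smallsetminus\bigcup_{d'\mid d,\,d'<d}\{x:\psi^{d'}(x)=x\}$ is $\psi$-invariant and $\psi$ acts freely as $\Z/d\Z$ on it, so Lemma~\ref{l:domaine} produces a clopen fundamental domain $G_d\subset F_d$. A direct check shows every $\vpi$-invariant probability on $X$ is automatically $\psi$-invariant (since $\psi$ coincides, on the clopen partition $(\kappa^{-1}(n))_n$ of $X$, with the corresponding powers of $\vpi$). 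Refining $G_d$ clopenly as $G_d=\bigsqcup_k G_{d,k}$ with each $L^+_\vpi(G_{d,k})<\eps/n$ (using the fine $\vpi$-tower partition above to obtain the bound) and setting $C_{d,k}=\bigsqcup_{0\le i<d}\psi^iG_{d,k}$ produces a $\psi$-invariant clopen partition of $X\smallsetminus F_1$ with $L^+_\vpi(C_{d,k})=d\,L^+_\vpi(G_{d,k})<\eps$; the restriction $\psi|_{C_{d,k}}$ has order dividing $d\mid n$, so taking the product over all $(d,k)$ gives the desired decomposition with each factor satisfying $\psi_i^n=1$.
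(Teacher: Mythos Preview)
Your torsion refinement (the last paragraph) is correct and is essentially what the paper does: the key points are that every $\vpi$-invariant probability is automatically $\psi$-invariant, and that Lemma~\ref{l:domaine} applied to $\psi$ on each free locus yields a fundamental domain which one then refines into clopen pieces of small $L^+_\vpi$.

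The general case, however, has a real gap. First, your locality claim at column boundaries is not quite right: if $E_a$ is the top slab over $B_j$, then $\psi(E_a)$ can meet the bottom slab of \emph{every} column whose base intersects $\vpi^{h_j}B_j$, not a single ``next'' column, unless you further refine the base so that the first-return map $\vpi_U$ carries each piece into a single piece. More seriously, the peeling step is asserted but not carried out. At an interior slab boundary you need an element of $\lb\vpi\rb$, supported in the two adjacent slabs, that exchanges the up-crossing set with the down-crossing set; these two clopen sets do have equal $\mu$-measure for every invariant $\mu$, but equal measure alone does not produce such an element of $\lb\vpi\rb$. You can rescue this by refining the base so that $\kappa$ is constant on every level $\vpi^iB_{j,l}$ of every refined column, after which each interior column becomes a finite permutation model and the matching is explicit --- but you have not said this, and even after this refinement the column tops and bottoms still require a separate (and not obvious) treatment.

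The paper's route is much shorter and sidesteps the $\vpi$-tower entirely. After the periodic case, it treats the case where $\psi$ itself has no finite orbit by applying Lemma~\ref{l:domaine} to $\psi$ (not to $\vpi$): one obtains a $\psi$-omniscient clopen $U$ with $U,\psi U,\dots,\psi^{n-1}U$ disjoint, hence $L^+_\vpi(U)\le 1/n<\eps$ since $\vpi$-invariant measures are $\psi$-invariant, and writes $\psi=\psi_U\cdot\tau$ with $\psi_U$ the first-return map (supported in $U$) and $\tau=\psi_U^{-1}\psi$ periodic by Fact~\ref{fact_pe}. For an arbitrary $\psi$ one picks $N$ large enough that both $\psi\vpi^N$ and $\vpi^{-N}$ have no finite orbit and writes $\psi=(\psi\vpi^N)\cdot\vpi^{-N}$. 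This two-line reduction replaces the entire slab-and-peeling mechanism.
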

\begin{proof}
Soit $o_\psi(x)$ le cardinal de la $\langle\psi\rangle$-orbite de $x$, et posons $X_n=X_n(\psi)=\{x\mid o_\psi(x)=n\}$ et $X_{\le n}=\bigcup_{k\le n}X_k$.

Remarquons que comme $\vpi$ n'a pas d'orbite finie, tout \'el\'ement $x\in X$ poss\`ede des voisinages clouverts avec $L^+_\vpi$ arbitrairement petit. Si $\psi^n=1$ pour un entier $n\ge 1$, il en d\'ecoule que tout \'el\'ement $x$ poss\`ede des voisinages $\psi$-invariants avec $L^+_\vpi$ arbitrairement petit. On en d\'eduit, si $\psi$ est p\'eriodique, l'existence d'une partition $(U_j)$ en clouverts $\psi$-invariants avec $L^+_\vpi(U_j)<\eps$ pour tout $j$, qui induit une d\'ecomposition $\psi=\prod\psi_j$, o\`u $\psi_j$ a un support inclus dans $U_j$. Le lemme est donc d\'emontr\'e pour $\psi$ p\'eriodique, et de plus $\psi_j^n=1$ pour tout $j$.

Faisons maintenant le cas oppos\'e o\`u $\psi$ n'a aucune orbite finie. On choisit un entier 
\mbox{$n>1/\eps$.} 
Le lemme \ref{l:domaine} montre qu'il existe un clouvert $U$ $\psi$-omniscient tel que 
\mbox{$U\cap \psi^i(U)=\emptyset$} pour tout $0\le i\le n-1$. Donc, comme toute probabilit\'e bor\'elienne $\vpi$-invariante est aussi $\psi$-invariante, on a $L^+_\vpi(U)\le 1/n<\eps$. On \'ecrit
$\psi=\psi_U\tau$, o\`u $\tau=\psi_U^{-1}\psi$ est p\'eriodique par le fait \ref{fact_pe}. Alors $L_\vpi^+(\Supp(\psi_U))<\eps$, et on peut d\'ecomposer $\tau$ par le cas p\'eriodique. 

Pour $\psi$ quelconque, si $N$ est assez grand, alors $\psi\vpi^N$ et $\vpi^{-N}$ n'ont pas d'orbite finie et $\psi=(\psi\vpi^N)\vpi^{-N}$, si bien qu'on peut d\'ecomposer chacun des deux par le cas pr\'ec\'edent.
\end{proof}

On utilise le lemme suivant, d\^u \`a Glasner et Weiss \cite{GW}. Sa preuve se distingue ici par l'invocation d'arguments \`a caract\`ere ergodique.

\begin{lem}\label{l:gw}
Soit $\vpi$ un auto\-hom\'eo\-mor\-phisme minimal d'un compact totalement s\'epar\'e $X$. Soient $A,B$ des clouverts tels que $\mu(B)<\mu(A)$ pour toute probabilit\'e bor\'elienne $\vpi$-invariante sur $X$. Alors il existe $\alpha\in\lb\vpi\rb'$ tel que $\alpha(B)\subset A$.
\end{lem}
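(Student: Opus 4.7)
Quitte \`a remplacer $A$ par $A\setminus B$ et $B$ par $B\setminus A$ (la condition sur les mesures \'etant pr\'eserv\'ee puisque $\mu(A\setminus B)-\mu(B\setminus A)=\mu(A)-\mu(B)>0$), on suppose $A\cap B=\emptyset$. La fonction $\mu\mapsto\mu(A)-\mu(B)=\int(1_A-1_B)\,d\mu$ \'etant continue sur le compact $\mathcal{M}_\vpi(X)$ (car $1_A-1_B$ est continue), il existe $\delta>0$ tel que $\mu(A)-\mu(B)>\delta$ pour toute $\mu\in\mathcal{M}_\vpi(X)$. Par compacit\'e faible-*, tout point d'accumulation d'une suite de moyennes empiriques $\nu_k=\frac{1}{N_k}\sum_{n=0}^{N_k-1}\delta_{\vpi^n x_k}$ (pour $N_k\to\infty$ et $x_k\in X$) est $\vpi$-invariant; il en r\'esulte par l'absurde l'existence d'un $N_0$ tel que pour tout $N\ge N_0$ et tout $x\in X$,
\[\#\{0\le n<N:\vpi^n(x)\in A\}-\#\{0\le n<N:\vpi^n(x)\in B\}>N\delta/2.\]

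Par minimalit\'e et compacit\'e, on choisit ensuite un clouvert $Y\subset X$ dont la fonction de premier retour $r:Y\to\N^*$ prend un nombre fini de valeurs $h_1,\dots,h_k$, toutes sup\'erieures \`a $\max(N_0,4/\delta)$, d'o\`u la partition par tours $X=\bigsqcup_{j=1}^k T_j$ avec $T_j=\bigsqcup_{i=0}^{h_j-1}\vpi^i(Y_j)$ et $Y_j=r^{-1}(h_j)$. En raffinant chaque $Y_j$ selon la signature $i\mapsto(1_A(\vpi^i y),1_B(\vpi^i y))$, qui est localement constante en $y\in Y_j$, on suppose que $A$ et $B$ sont r\'eunions d'\'etages. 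Notons $I^A_j=\{i:\vpi^i(Y_j)\subset A\}$ et $I^B_j=\{i:\vpi^i(Y_j)\subset B\}$, de cardinaux $a_j,b_j$; l'estim\'ee uniforme appliqu\'ee \`a $N=h_j$ et \`a un point $y\in Y_j$ donne $a_j-b_j>h_j\delta/2\ge 2$. On fixe une injection $\sigma_j:I^B_j\hookrightarrow I^A_j$ et on d\'efinit $\pi_j\in\mathfrak{S}_{h_j}$ en \'echangeant chaque $k\in I^B_j$ avec $\sigma_j(k)$ et en fixant les autres points; comme $\pi_j$ a pour signature $(-1)^{b_j}$ et que $h_j-2b_j\ge 2$, on compose \'eventuellement $\pi_j$ avec une transposition de deux points fix\'es pour la rendre paire, sans affecter la propri\'et\'e $\pi_j(I^B_j)\subset I^A_j$.

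On pose $\alpha(x)=\vpi^{\pi_j(i)-i}(x)$ pour $x\in\vpi^i(Y_j)$; c'est un \'el\'ement de $\lb\vpi\rb$ et $\alpha(B)\subset A$. La construction du fait~\ref{fait_sn} appliqu\'ee \`a la tour $T_j$ avec $U=Y_j$ et $g_i=\vpi^i$ pour $0\le i<h_j$ plonge $\mathfrak{S}_{h_j}$ dans $\lb\vpi\rb$, et la restriction $\alpha|_{T_j}$ est exactement l'image de $\pi_j$; la parit\'e de $\pi_j$ donne $\alpha|_{T_j}\in\Alt_{h_j}=[\mathfrak{S}_{h_j},\mathfrak{S}_{h_j}]\subset\lb\vpi\rb'$. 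Les supports des $\alpha|_{T_j}$ \'etant deux \`a deux disjoints (ils sont contenus dans des tours distinctes), le produit $\alpha=\prod_j\alpha|_{T_j}$ reste dans $\lb\vpi\rb'$. L'\'etape la plus d\'elicate est le contr\^ole ergodique uniforme \'etabli \`a la premi\`ere \'etape, garantissant que sur chaque tour la proportion d'\'etages dans $A$ domine strictement celle dans $B$, avec suffisamment de marge pour ajuster la parit\'e de $\pi_j$.
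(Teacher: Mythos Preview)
Your argument follows the same route as the paper: a uniform ergodic estimate obtained by weak-$*$ compactness of $\mathcal{M}_\vpi(X)$, Kakutani--Rokhlin towers with columns taller than the threshold $N_0$, a refinement making $A$ and $B$ unions of floors, an injection of the $B$-floors into the $A$-floors on each column, and a parity correction to land in $\lb\vpi\rb'$. The paper does not reduce to the disjoint case; instead it sorts the floors of each column into the four sets $A\smallsetminus B$, $B\smallsetminus A$, $A\cap B$, $X\smallsetminus(A\cup B)$ and requires the column permutation to fix the last two blocks setwise, which automatically keeps $A\cap B$ inside $A$.

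There is a small but genuine gap in your opening reduction. Replacing $(A,B)$ by $(A\smallsetminus B,\,B\smallsetminus A)$ does preserve the measure hypothesis, but the conclusion does not transfer back as a black box: an $\alpha$ with $\alpha(B\smallsetminus A)\subset A\smallsetminus B$ need not satisfy $\alpha(B)\subset A$, since nothing controls $\alpha$ on $A\cap B$. In your construction this actually bites: the parity-fixing transposition is chosen among the $h_j-2b_j$ fixed points of $\pi_j$, and one of those fixed floors may lie in (the original) $A\cap B$ while the other lies in $X\smallsetminus(A\cup B)$, so the resulting $\alpha$ sends a point of $B$ outside $A$. The fix is immediate with what you already have: since $a_j-b_j\ge 3$, the set $I^A_j\smallsetminus\sigma_j(I^B_j)$ contains at least two indices, and choosing the parity transposition there forces $\alpha$ to be supported on the new $A\cup B$ (i.e.\ on the original $A\bigtriangleup B$), hence to be the identity on the original $A\cap B$. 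With this one-line adjustment your proof is complete and equivalent to the paper's.
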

\begin{proof}
Le cas o\`u $X$ est fini \'etant trivial, on suppose $X$ infini, si bien que l'action de $\Z$ est libre. On pose $f=1_A-1_B$; on a donc $\int fd\mu>0$ pour tout $\mu\in \mathcal{M}_\vpi(X)$. On commence par observer qu'il existe $c>0$ et $N_0\ge 5$ tels que pour tout $N\ge N_0$ et tout $x\in X$, on a
\begin{equation}\frac{1}{N}\sum_{i=0}^{N-1}f(\vpi^i(x))\ge c.\label{esti}\end{equation}
Sinon, on trouve une suite $(N_k)$ tendant vers l'infini, une suite $(c_k)$ avec $\overline{\lim}\,c_k\le 0$ et une suite $(x_k)$ telle que $\frac{1}{N_k}\sum_{i=0}^{N_k-1}f(\vpi^i(x_k))\le c_k$. Autrement dit, en posant $\mu_k=\frac{1}{N_k}\sum_{i=0}^{N_k-1}\vpi^i_\ast\delta_{x_k}$, on a $\int fd\mu_k\le c_k$. Par compacit\'e, la suite $(\mu_k)$ admet un point d'accumulation $\mu$ pour la topologie faible-*, et donc $\int fd\mu\le 0$. Or $\mu$ doit \^etre $\vpi$-invariante et c'est une contradiction.

Consid\'erons maintenant un clouvert non vide $U$ inclus dans $A$, tel que les $\vpi^i(U)$ pour $0\le i<N_0$ soient deux \`a deux disjoints. Comme $\vpi$ est minimal, $U$ est $\vpi$-omniscient. Soient $k=k_{A,\vpi}$ la fonction temps de retour sur $U$ et $\kappa$ une borne sup\'erieure pour $k$. On peut trouver une partition en clouverts $(U'_\ell)$ de $U$ telle que $k$ soit constante sur chacune des composantes de la partition. Remarquons que les $\vpi^i(U'_\ell)$, pour $0\le i\le k(U'_\ell)-1$, sont deux \`a deux disjoints, et qu'en faisant varier \'egalement $i$ ils forment une partition de $U$ (commun\'ement appel\'ee partition en {\og tours de Kakutani-Rokhlin\fg}). On raffine la partition $(U'_\ell)$ en une partition de $U$ en clouverts $(U_j)$ et telle que chaque $\vpi^i(U_j)$, pour $0\le i\le k(U_j)-1$, soit inclus dans l'un des ensembles $A\smallsetminus B$, $B\smallsetminus A$, $A\cap B$, ou bien $X\smallsetminus (A\cup B)$. Comme pour la partition pr\'ec\'edente on a une partition
$$X=\bigsqcup_j\bigsqcup_{0\le i\le k(U_j)-1}\vpi^i(U_j).$$
Pour chaque $j$, appelons {\og tour\fg}  une r\'eunion $\bigsqcup_{0\le i\le k(U_j)-1}\vpi^i(U_j)$. Pour chaque $j$, soit $A_j$ (resp.\ $B_j$, $C_j$, $D_j$) l'ensemble des $i\in\{0,\dots, k(U_j)-1\}$ tel que $\vpi^i(U_j)$ appartient \`a $A\smallsetminus B$
(resp.\ $B\smallsetminus A$, $A\cap B$, $X\smallsetminus (A\cup B)$). Alors pour tout $j$ on a $\#(A_j)\ge \#(B_j)$. En effet, par (\ref{esti}), pour $x\in U_j$, on a 
$$\frac{\#(A_j)-\#(B_j)}{k(U_j)}=\frac{1}{k(U_j)}\sum_{i=0}^{k(U_j)-1}f(\vpi^i(x))\ge c>0.$$
Pour tout $j$, on peut trouver une permutation $\sigma_j$ de $\{0,\dots, k(U_j)-1\}$ qui envoie $B_j$ dans $A_j$, et pr\'eserve $C_j$ et $D_j$. Cela d\'efinit naturellement un \'el\'ement $\alpha\in\lb\vpi\rb$, pr\'eservant globalement chaque tour, qui envoie $B$ dans $A$. Remarquons qu'on peut s'arranger pour que chaque $\sigma_j$ soit une permutation paire: en effet comme $k(U_j)\ge 5$ pour tout $j$, au moins l'un des ensembles $A_j,\dots,D_j$ a deux \'el\'ements et on peut donc composer par une transposition si n\'ecessaire. Ce choix \'etant fait, l'\'el\'ement $\alpha$ obtenu est un commutateur.
\end{proof}

%la simplicit\'e de $\lb\vpi\rb'$

\begin{thm}\label{dersim}
Soit $\vpi$ un auto\-hom\'eo\-mor\-phisme minimal d'un compact totalement s\'epar\'e $X$ \`a au moins 5 points. Alors le groupe d\'eriv\'e $\lb\vpi\rb'$ est simple et est contenu dans tout sous-groupe distingu\'e non trivial de $\lb\vpi\rb$.
\end{thm}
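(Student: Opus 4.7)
Si $X$ est fini, la minimalit\'e de $\vpi$ et l'hypoth\`ese $|X|\ge 5$ impliquent que $\lb\vpi\rb=\mathfrak{S}_{|X|}$ et $\lb\vpi\rb'=\mathfrak{A}_{|X|}$ est simple non ab\'elien, ce qui r\`egle ce cas. On suppose d\'esormais $X$ infini (donc hom\'eomorphe \`a l'espace de Cantor et $\vpi$-action libre). On d\'emontre d'abord la seconde assertion: pour tout $f \in \lb\vpi\rb \smallsetminus \{1\}$, le sous-groupe distingu\'e $N=N(f)$ de $\lb\vpi\rb$ engendr\'e par $f$ contient $\lb\vpi\rb'$. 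L'argument comprend une \emph{\'etape locale} (le commutateur de deux \'el\'ements \`a petit support est dans $N$) et une \emph{\'etape globale} (le lemme~\ref{decompo} d\'ecoupe tout \'el\'ement de $\lb\vpi\rb'$ en tels commutateurs).

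\textbf{\'Etape locale.} Puisque $f \neq 1$ et $X$ est totalement s\'epar\'e, on trouve $x_0\in X$ avec $f(x_0)\neq x_0$ et un voisinage clouvert $U$ de $x_0$ tel que $f(U)\cap U=\emptyset$. Par minimalit\'e, un recouvrement fini de $X$ par translat\'es $\vpi^i U$ implique $\mu(U)\ge 1/N$ pour toute probabilit\'e invariante $\mu$, donc $\eta:=L^-_\vpi(U)>0$. Soient $\alpha,\beta\in\lb\vpi\rb$ \`a supports inclus dans un clouvert $W$ avec $L^+_\vpi(W)<\eta$. Le lemme~\ref{l:gw} fournit $\gamma\in\lb\vpi\rb'$ avec $\gamma(W)\subset U$; quitte \`a conjuguer $\alpha,\beta$ par $\gamma$ (ce qui pr\'eserve $N$, normal dans $\lb\vpi\rb$), on suppose $\Supp\alpha,\Supp\beta\subset U$. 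Alors $f\beta^{\pm 1}f^{-1}$ est \`a support dans $f(U)$, disjoint de $U$, donc commute avec $\alpha$. Un d\'eveloppement direct utilisant ces commutations donne
\[
[\alpha,[f,\beta]]=(f\beta f^{-1})\,[\alpha,\beta^{-1}]\,(f\beta f^{-1})^{-1}.
\]
Le membre de gauche est dans $N$ car $[f,\beta]=f\cdot(\beta f^{-1}\beta^{-1})\in N$ et l'$\alpha$-conjugaison pr\'eserve $N$; en conjuguant par $(f\beta f^{-1})^{-1}\in\lb\vpi\rb$, on en d\'eduit $[\alpha,\beta^{-1}]\in N$, et, par substitution $\beta\leftrightarrow\beta^{-1}$, $[\alpha,\beta]\in N$.

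\textbf{\'Etape globale et simplicit\'e.} Soit $g\in\lb\vpi\rb'$ \'ecrit $g=\prod_i[a_i,b_i]$ avec $a_i,b_i\in\lb\vpi\rb$. Le lemme~\ref{decompo}, avec seuil $\eps<\eta/2$, d\'ecompose $a_i=\prod_j a_{ij}$ et $b_i=\prod_k b_{ik}$, chaque facteur ayant un support de $L^+_\vpi$-mesure $<\eps$. L'identit\'e r\'ecursive $[uv,w]=u[v,w]u^{-1}\cdot[u,w]$ exprime alors chaque $[a_i,b_i]$ comme produit de conjugu\'es (dans $\lb\vpi\rb$) des $[a_{ij},b_{ik}]$; la r\'eunion des supports d'un tel commutateur est de $L^+_\vpi$-mesure $<2\eps<\eta$, donc l'\'etape locale donne $[a_{ij},b_{ik}]\in N$, et par normalit\'e $g\in N$. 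Ceci \'etablit la deuxi\`eme assertion. La simplicit\'e de $\lb\vpi\rb'$ s'en d\'eduit en reprenant l'argument \`a l'int\'erieur de $\lb\vpi\rb'$: pour $H\triangleleft\lb\vpi\rb'$ non trivial et $f\in H\smallsetminus\{1\}$, l'\'el\'ement $\gamma$ du lemme~\ref{l:gw} est d\'ej\`a dans $\lb\vpi\rb'$, et $(f\beta f^{-1})\in\lb\vpi\rb'$ d\`es que $\beta\in\lb\vpi\rb'$, ce qui permet de conclure $[\alpha,\beta]\in H$ pour $\alpha,\beta\in\lb\vpi\rb'$ \`a petit support commun. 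L'obstacle principal devient alors technique: il faut raffiner la d\'ecomposition du lemme~\ref{decompo} de sorte que les facteurs $a_{ij},b_{ik}$ puissent \^etre choisis dans $\lb\vpi\rb'$ (non seulement dans $\lb\vpi\rb$), en appariant astucieusement les facteurs pour annuler leur image dans l'ab\'elianis\'e $\lb\vpi\rb/\lb\vpi\rb'$; ce travail suppl\'ementaire est essentiel pour remplacer $N_{\lb\vpi\rb}(f)$ par $N_{\lb\vpi\rb'}(f)\subset H$ dans la conclusion.
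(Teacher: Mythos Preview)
Your local step and your proof of the second assertion (that $\lb\vpi\rb'$ lies in every nontrivial normal subgroup of $\lb\vpi\rb$) are correct and follow essentially the Bezuglyi--Medynets strategy used in the paper. Your commutator identity $[\alpha,[f,\beta]]=(f\beta f^{-1})[\alpha,\beta^{-1}](f\beta f^{-1})^{-1}$ is a clean variant of the paper's computation with $q=\alpha^{-1}f\alpha$ and $\hat{h}_j=[h_j,q]$; the two are interchangeable.

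There is, however, a genuine gap in your simplicity argument. You correctly identify the obstacle: once $H$ is only normal in $\lb\vpi\rb'$, the expansion $[uv,w]=u[v,w]u^{-1}\cdot[u,w]$ produces conjugating elements $u$ that lie in $\lb\vpi\rb$ but not in $\lb\vpi\rb'$, so normality of $H$ in $\lb\vpi\rb'$ is no longer enough. But your proposed fix --- ``appariant astucieusement les facteurs pour annuler leur image dans l'ab\'elianis\'e'' --- is only a gesture, not an argument, and it is not clear it can be made to work while keeping the supports small.

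The paper resolves this differently and concretely. First, it shows that $\lb\vpi\rb'$ is \emph{perfect}: every element of order~5 in $\lb\vpi\rb$ sits, via Lemma~\ref{l:domaine}, inside a copy of $\Alt_5\subset\lb\vpi\rb$ and is therefore a commutator; the subgroup generated by order-5 elements is thus contained in $\lb\vpi\rb'$, and being nontrivial and normal in $\lb\vpi\rb$, equals $\lb\vpi\rb'$ by the already-established second assertion. Hence $\lb\vpi\rb''=\lb\vpi\rb'$. Second, the \emph{periodic} clause of Lemma~\ref{decompo} says that any element of order~5 decomposes as a product of order-5 elements with arbitrarily small support --- and these factors, being of order~5, automatically lie in $\lb\vpi\rb'$. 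Combining, any $g,h\in\lb\vpi\rb'$ can be written as products of small-support elements $g_i,h_j\in\lb\vpi\rb'$, so the conjugating elements in the commutator expansion stay in $\lb\vpi\rb'$, and your local step (which, as you note, works entirely inside $\lb\vpi\rb'$) then gives $[g_i,h_j]\in H$. This is the missing idea you need to complete the proof.
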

\begin{proof}
Si $X$ est fini, ce n'est rien d'autre que la simplicit\'e du groupe altern\'e, qu'il n'est pas n\'ecessaire de red\'emontrer ici. On suppose maintenant $X$ infini.

%Consid\'erons, dans un premier temps,

Consid\'erons un sous-groupe non trivial $N$ de $\lb\vpi\rb$ normalis\'e par $\lb\vpi\rb'$. Le commutant de $\lb\vpi\rb'$ dans $\Homeo(X)$ est trivial (corolaire \ref{cor_trans}), si bien qu'on peut choisir un \'el\'ement non trivial $f$ dans $N\cap\lb\vpi\rb'$.

Pour d\'emontrer le r\'esultat, il faut montrer que pour tous $g,h\in\lb\vpi\rb$ on a $[g,h]\in N$. Supposons, dans un premier temps\footnote{Pierre Py m'a signal\'e une erreur dans la preuve dans une version pr\'ec\'edente de ce texte, reprenant une erreur dans \cite{BeM}. L'argument donn\'e ne fonctionnait en effet qu'en supposant $N$ distingu\'e dans $\lb\vpi\rb$, ce qui ne suffisait pas \`a \'etablir la simplicit\'e de $\lb\vpi\rb'$.}, 
que $N$ est distingu\'e dans $\lb\vpi\rb$.

Soit $E$ un clouvert non vide disjoint de $f(E)$. L'application qui \`a $\mu\in\mathcal{M}_\vpi(X)$ associe $\mu(E)$ est continue; par compacit\'e de $\mathcal{M}_\vpi(X)$, il en d\'ecoule que $\eps=L^-_\vpi(E)>0$.

Par le lemme \ref{decompo}, on peut \'ecrire $g=\prod g_i$ et $h=\prod h_j$ avec
$L^+_\vpi(\Supp(g_i))$ et $L^+_\vpi(\Supp(h_j))$ strictement inf\'erieurs \`a $\eps/2$. L'\'el\'ement $[g,h]$ appartient au sous-groupe distingu\'e engendr\'e par les $[g_i,h_j]$ (comme on voit en quotientant par ce dernier). Donc il suffit de montrer que les $[g_i,h_j]$ sont dans $N$.

%autrement dit on est ramen\'e au cas o\`u $$L^+_\vpi(\Supp(g)),L^+_\vpi(\Supp(h))<\eps/2.$$

Soit $U$ le clouvert $\Supp(g_i)\cup\Supp(h_j)$; on a $L^+_\vpi(U)<\eps=L^-_\vpi(E)$.
Par le lemme \ref{l:gw}, il existe un \'el\'ement $\alpha\in\lb\vpi\rb'$ tel que $\alpha(U)\subset E$; soit $q=\alpha^{-1}f\alpha\in N$. L'\'el\'ement $\hat{h}_j=[h_j,q]=h_jqh_j^{-1}q^{-1}$ appartient \`a $N$, et donc $[g_i,\hat{h}_j]\in N$. Puisque $q(U)\cap U=\emptyset$, les \'el\'ements $g_i^{-1}$ et $qh_jq^{-1}$ commutent, donc 
$$[g_i,\hat{h}_j]=g_ih_j(qh_j^{-1}q^{-1})g_i^{-1}(qh_jq^{-1})h_j^{-1}=g_ih_jg_i^{-1}h_j^{-1}=[g_i,h_j],$$ 
si bien que $[g_i,h_j]\in N$.

On a donc d\'emontr\'e que tout sous-groupe distingu\'e de $\lb\vpi\rb$ non r\'eduit \`a l'identit\'e contient $\lb\vpi\rb'$. On peut appliquer ce r\'esultat  au sous-groupe de $\lb\vpi\rb$ engendr\'e par les \'el\'ements d'ordre 5. En effet, \'etant donn\'e un tel \'element, l'ensemble de ses points fixes est clouvert, et on peut appliquer le fait \ref{l:domaine} au compl\'ementaire de l'ensemble de ses points fixes, ce qui permet de v\'erifier que cet \'el\'ement est contenu dans un sous-groupe de $\lb\vpi\rb$ isomorphe au groupe altern\'e $\Alt_5$; en particulier, tout \'el\'ement d'ordre 5 est un commutateur, et d'autre part le fait \ref{fait_sn} montre qu'il existe des \'el\'ements d'ordre 5; ceci d\'emontre que le groupe $\lb\vpi\rb'$ est \'egal au sous-groupe de $\lb\vpi\rb$ engendr\'e par les \'elements d'ordre 5 (le lecteur peut v\'erifier qu'il en est de m\^eme pour tout ordre entier impair sup\'erieur \`a~2). Ceci d\'emontre en outre que $\lb\vpi\rb'$ est engendr\'e par ses sous-groupes isomorphes \`a $\Alt_5$, et donc co\"incide avec son sous-groupe d\'eriv\'e (ce qu'on voit \'egalement en remarquant que $\lb\vpi\rb''$ est non trivial et distingu\'e dans $\lb\vpi\rb$ et contient donc le groupe d\'eriv\'e).

%d'une part au second d\'eriv\'e $\lb\vpi\rb''$ pour obtenir qu'il co\"incide avec $\lb\vpi\rb'$, et aussi (ce qui implique accessoirement que le second d\'eriv\'e $\lb\vpi\rb''$ est non trivial)

Pour montrer le th\'eor\`eme, puisque $\lb\vpi\rb''=\lb\vpi\rb'$, il suffit de montrer que pour tous $g,h\in\lb\vpi\rb'$ on a $[g,h]\in N$. On peut donc, avec la version {\og p\'eriodique d'ordre 5\fg}   du lemme \ref{decompo}, faire le raisonnement pr\'ec\'edent en supposant que $g_i,h_j\in\lb\vpi\rb'$, auquel cas il suffit de supposer $N$ distingu\'e dans $\lb\vpi\rb'$ pour obtenir que $[g_i,h_j]$ appartient \`a $N$. 
\end{proof}

%Ceci d\'emontre que $N$ contient le second d\'eriv\'e $\lb\vpi\rb''$, qui co\"incide avec le d\'eriv\'e $\lb\vpi\rb'$. 

%En remarquant que tout \'el\'ement d'ordre~5 de $\lb\vpi\rb$ est un commutateur (comme cons\'equence du fait \ref{l:domaine} et du fait que tout \'el\'ement du groupe altern\'e $\mathfrak{A}_5$ est un commutateur), on en d\'eduit que $\lb\vpi\rb'$ est engendr\'e par ses 
%\'el\'ements d'ordre~5. Pour un tel \'el\'ement, le lemme \ref{decompo} fournit une d\'ecomposition comme produit d'\'el\'ements d'ordre 5, qui sont donc des commutateurs. On peut donc faire le raisonnement ci-dessus en supposant que $g,h\in\lb\vpi\rb'$, auquel cas il suffit de supposer $N$ distingu\'e dans $\lb\vpi\rb'$.

%La seconde assertion d\'ecoule de la premi\`ere car le commutant de $\lb\vpi\rb'$ dans $\Homeo(X)$ est trivial (corolaire \ref{cor_trans}), si bien qu'un sous-groupe distingu\'e non trivial de $\lb\vpi\rb$ intersecte forc\'ement $\lb\vpi\rb'$ non trivialement, et donc le contient par simplicit\'e de ce dernier.

L'approche de Matui, tr\`es diff\'erente, est plus complexe mais apporte d'autres informations. Elle suppose toujours de mani\`ere essentielle que $\Gamma=\lb\vpi\rb$ et $X$ est compact totalement s\'epar\'e, minimal. Elle est bas\'ee sur une \'etude du
sous-groupe $$\lb\vpi\rb_{[x\rangle}=\{f\in\lb\vpi\rb:f(\vpi^\N(x))=\vpi^\N(x)\},\quad\text{o\`u}\quad\vpi^\N(x)=\{\vpi^n(x):n\in\N\};$$
montrant notamment que si $x,y$ appartiennent \`a des orbites distinctes alors $\lb\vpi\rb'\subset\lb\vpi\rb_{[x\rangle}\lb\vpi\rb_{[y\rangle}$.
On utilisera ce sous-groupe \`a plusieurs reprises par la suite (\S\ref{abelianise} et \S\ref{lecri}), mais de mani\`ere moins approfondie que dans \cite{Ma1}.

\subsection{Type-finitude}\label{type}

On commence par l'observation suivante qui justifie que sous des hypoth\`eses tr\`es g\'en\'erales, \^etre un sous-d\'ecalage est une condition n\'eces\-saire pour que le groupe plein-topologique et son d\'eriv\'e soient de type fini.

\begin{propp}\label{sdnec}
Soit $\Gamma$ un groupe agissant par auto\-hom\'eo\-mor\-phismes sur un espace compact totalement s\'epar\'e $X$, sans point fixe global. Si $\Lambda=\lb\Gamma\rb$ est de type fini et $X$ n'a qu'un nombre fini de $\Gamma$-orbites de cardinal au plus deux, alors $(\Gamma,X)$ est un sous-d\'ecalage. Si de plus aucune $\Gamma$-orbite n'est de cardinal deux, il en est de m\^eme si $\Lambda$ est un sous-groupe de $\lb\Gamma\rb$ contenant $\lb\Gamma\rb'$.
\end{propp}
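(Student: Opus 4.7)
The plan is to produce a finite clopen partition of $X$ whose $\Gamma$-translates separate every pair of distinct points, in order to apply Fact \ref{fait_sd}. Let $f_1,\dots,f_n$ be a finite generating family of $\Lambda$. Each $f_i\in\lb\Gamma\rb$ admits a continuous étagée function $\kappa_i:X\to\Gamma$; the level sets $\kappa_i^{-1}(\gamma)$ yield a finite clopen partition $\mathcal{P}_i$, and I take $\mathcal{Q}$ to be the common refinement of $\mathcal{P}_1,\dots,\mathcal{P}_n$—a finite clopen partition on which each generator coincides with a single element of $\Gamma$ on each part. Let $\mathcal{A}$ be the Boolean subalgebra of $\Clo(X)$ generated by $\{\gamma Q:\gamma\in\Gamma,\,Q\in\mathcal{Q}\}$, and define $x\sim y$ iff $x$ and $y$ belong to the same elements of $\mathcal{A}$.

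A straightforward induction on word length in the generators shows that $\mathcal{A}$ is $\Lambda$-invariant, the base step being that $f_i(A)=\bigcup_\gamma\gamma\cdot(\kappa_i^{-1}(\gamma)\cap A)$ lies in $\mathcal{A}$ whenever $A\in\mathcal{A}$, because each $\kappa_i^{-1}(\gamma)\in\mathcal{A}$; the same induction in fact gives, for every $\psi\in\Lambda$ and every word expressing it, that the level sets of the associated $\kappa$-function lie in $\mathcal{A}$. Consequently $\sim$ is $\Lambda$-invariant, and Fact \ref{fpart} applied to the partition of $X$ into $\sim$-classes yields the dichotomy: every $\Gamma$-orbit (in case 1 unconditionally; in case 2 since orbits of cardinality $2$ are excluded by hypothesis and of cardinality $1$ by the absence of a global fixed point) is either composed of singleton $\sim$-classes or entirely contained in a single $\sim$-class.

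The main obstacle is to rule out the second possibility for orbits of cardinality $\geq 3$. Suppose $O$ is such an orbit contained in a $\sim$-class $C$ with $|C|\ge 2$; pick distinct $a,b,c\in O$ and small pairwise disjoint clopen neighborhoods $U_a,U_b,U_c$. Fact \ref{fait_sn} furnishes a $3$-cycle $\alpha\in\lb\Gamma\rb'\subset\Lambda$ permuting these neighborhoods. Expressing $\alpha$ as a word $w$ in the $f_i$ and invoking the enhanced induction, the level sets of $\kappa_w$ lie in $\mathcal{A}$; at points where the $\Gamma$-action is free these level sets coincide with $U_a,U_b,U_c$ and the complement of their union, so $U_a\in\mathcal{A}$ in that case. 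Choosing $U_a$ arbitrarily small avoiding a prescribed $b'\in O\setminus\{a\}$ then forces the $\sim$-class of $a$ to reduce to $\{a\}$, contradicting $|C|\ge 2$. The genuinely delicate point is handling points with non-trivial $\Gamma$-stabilizer, because the $\kappa$-function attached to a word representing $\alpha$ is determined only modulo pointwise stabilizers; recovering $U_a$ as an element of $\mathcal{A}$ in full generality requires a careful stratification of $X$ by stabilizer type.

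Granting this key step, the only $\sim$-classes of cardinality $\geq 2$ are contained in the union $F\subset X$ of $\Gamma$-orbits of cardinality at most $2$—a finite set by hypothesis (empty in case 2). Because $X$ is totally separated, I can refine $\mathcal{Q}$ to a finite clopen partition $\mathcal{Q}'$ by adjoining finitely many clopens that individually separate the pairs of points in $F$. The $\Gamma$-translates of $\mathcal{Q}'$ then separate every pair of distinct points of $X$—pairs involving at least one point outside $F$ are already separated by translates of $\mathcal{Q}$, and pairs inside $F$ by the newly added clopens—so Fact \ref{fait_sd} concludes that $(\Gamma,X)$ is a subshift.
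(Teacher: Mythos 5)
Your setup --- the common refinement $\mathcal{Q}$ trivializing the generators, the $\Gamma$-invariant Boolean algebra $\mathcal{A}$ it generates, the induction showing that $\mathcal{A}$ (hence the partition into $\sim$-classes) is $\Lambda$-invariant, and the appeal to Fact~\ref{fpart} --- is essentially the paper's. The genuine gap is at what you yourself call the key step: ruling out the branch of Fact~\ref{fpart} in which an orbit is contained in a single $\sim$-class. Your attempt --- writing a $3$-cycle $\alpha$ furnished by Fact~\ref{fait_sn} as a word $w$ in the generators and recovering $U_a$ from the level sets of $\kappa_w$ --- does not go through as written: even at points where the action is free the level sets need not be $U_a,U_b,U_c$ and their complement (the elements $g_bg_a^{-1}$, $g_cg_b^{-1}$, $g_ag_c^{-1}$ may coincide and merge level sets), and at points with nontrivial stabilizer $\kappa_w$ depends on the chosen word and need not detect $U_a$ at all. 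You explicitly defer both difficulties (``granting this key step'', ``requires a careful stratification by stabilizer type''), so the proof is incomplete exactly where its content should lie.

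The paper avoids all of this by exploiting the no-global-fixed-point hypothesis, which in your write-up serves only to exclude orbits of cardinality $1$. For every $x$ there exist $\gamma_x\in\Gamma$ and a clopen neighbourhood $V_x$ of $x$ with $V_x\cap\gamma_xV_x=\emptyset$; by compactness one extracts a finite subcover and refines it into a finite clopen partition $(U_j)$ such that no $U_j$ contains an entire $\Gamma$-orbit. Adjoining the $U_j$ to $\mathcal{Q}$ before generating $\mathcal{A}$, every $\sim$-class is contained in some $U_j$ and hence contains no orbit, so the second branch of the dichotomy of Fact~\ref{fpart} is impossible for every orbit; all orbits are then composed of atoms, the partition is trivial, the translates of the (still finite) refined partition separate points, and Fact~\ref{fait_sd} applies directly. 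This also renders your final patch (the finite set $F$ and the extra separating clopens) unnecessary. If you want to keep the rest of your structure, replace the word-by-word analysis of $\kappa_w$ by this compactness argument.
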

\begin{proof}
Pour tout $x$ il existe $\gamma_x$ dans $\Gamma$ et un voisinage clouvert $V_x$ de $x$ disjoint de $\gamma_xV_x$. On peut raffiner et extraire une partition $(U_j)$ en clouverts  telle que chaque $U_j$ ne contient aucune orbite.

Supposons que $\lb\Gamma\rb$ est de type fini. Consid\'erons un nombre fini de clouverts sur qui chaque g\'en\'erateur est donn\'e par action d'un \'el\'ement donn\'e de $\Gamma$. Consid\'erons l'alg\`ebre bool\'eenne $\Gamma$-invariante engendr\'ee par ces clouverts et par les $U_j$. Elle d\'efinit une partition $\Gamma$-invariante de $X$. Comme les $U_j$ ne contiennent aucune orbite, aucune composante de cette partition ne contient d'orbite. Donc, par le fait \ref{fpart}, toute $\Gamma$-orbite est compos\'ee d'atomes de cette partition, donc c'est la partition triviale, ce qui montre, par le fait \ref{fait_sd}, que $(\Gamma,X)$ est un sous-d\'ecalage.

Pour $\Lambda$ contenant $\lb\Gamma\rb'$, gr\^ace au fait \ref{fpart}, l'hypoth\`ese suppl\'ementaire permet d'appliquer le m\^eme raisonnement.
\end{proof}

On va maintenant se restreindre au cas de $\Gamma=\Z$. Matui prouve le th\'eor\`eme suivant \cite[Theorem 5.4]{Ma1}.

\begin{thm}\label{fg}
Soit $(X,\vpi)$ un sous-d\'ecalage minimal sur $\Z$. Alors le groupe d\'eriv\'e $\lb\vpi\rb'$ du groupe plein-topologique correspondant est de type fini.
\end{thm}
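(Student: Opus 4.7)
La strat\'egie consiste \`a exhiber un syst\`eme g\'en\'erateur fini explicite \`a partir de la combinatoire du sous-d\'ecalage. Fixons un plongement \'equivariant $X\subset A^\Z$ avec $A$ fini. Pour $n\ge 0$, on note $W_n\subset A^{2n+1}$ l'ensemble (fini) des mots admissibles de longueur $2n+1$, et $C_w=\{x\in X:x_{[-n,n]}=w\}$ le cylindre central associ\'e \`a $w\in W_n$; les $(C_w)_{w\in W_n}$ forment une partition clouverte finie de $X$. Tout $f\in\lb\vpi\rb$ \'etant donn\'e par une fonction continue \'etag\'ee $\kappa_f:X\to\Z$, la compacit\'e fournit un entier $n$ minimal, le \emph{rayon} de $f$, tel que $\kappa_f$ soit constante sur chaque $C_w$ pour $w\in W_n$.

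Pour un entier $n_0$ \`a d\'eterminer, on consid\`ere la partie finie $\mathcal{S}_{n_0}\subset\lb\vpi\rb'$ constitu\'ee des \'el\'ements de la forme $\tau_1\tau_2$, o\`u chaque $\tau_i$ est une involution de rayon $\le n_0$ dont le support est une r\'eunion de cylindres $C_w$ pour $w\in W_{n_0}$, la fonction $\kappa_{\tau_i}$ \'etant born\'ee en valeur absolue par $n_0$, et les supports de $\tau_1$ et $\tau_2$ \'etant disjoints. Il y a \`a l'\'evidence un nombre fini de tels \'el\'ements.

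L'\'etape centrale est de montrer que $\mathcal{S}_{n_0}$ engendre $\lb\vpi\rb'$ pour un $n_0$ assez grand, par r\'ecurrence sur le rayon $n$ d'un \'el\'ement $f\in\lb\vpi\rb'$. Pour d\'ecomposer $f$ en \'el\'ements \'el\'ementaires, on utilise une partition de Kakutani-Rokhlin associ\'ee \`a un cylindre bien choisi, laquelle d\'ecompose $X$ en finiment nombreuses colonnes de hauteur finie: l'action de $f$ s'interpr\`ete alors comme une permutation des bases de colonnes (restreinte au groupe altern\'e gr\^ace \`a l'appartenance \`a $\lb\vpi\rb'$, via le corolaire~\ref{cor_trans}) combin\'ee avec des d\'ecalages intra-colonne par it\'er\'es de $\vpi$. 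Pour r\'eduire le rayon, on exploite que les conjugu\'es $\vpi^k\sigma\vpi^{-k}$ d'un $\sigma\in\lb\vpi\rb'$ restent dans $\lb\vpi\rb'$ (bien que $\vpi$ lui-m\^eme n'y soit pas), ce qui permet de translater les supports des transpositions \'el\'ementaires tout en restant dans le sous-groupe d\'eriv\'e.

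Le point le plus d\'elicat sera l'\'etape inductive $n\to n+1$: il faudra montrer que les nouvelles involutions associ\'ees au raffinement de la partition $W_n\to W_{n+1}$ s'expriment \`a l'aide de $\mathcal{S}_{n_0}$ et des \'el\'ements d\'ej\`a construits. L'argument doit reposer sur une analyse fine des {\og configurations de bord\fg}, c'est-\`a-dire des lieux o\`u une involution change la valeur de $\kappa$: par minimalit\'e, les types de telles configurations sont en nombre fini \`a translation pr\`es, ce qui devrait permettre de clore l'induction sans jamais introduire de nouveau g\'en\'erateur. C'est pr\'ecis\'ement l'\'etape o\`u la nature de sous-d\'ecalage (et non juste de syst\`eme minimal g\'en\'eral) intervient, puisqu'elle garantit la finitude de la complexit\'e de bord que l'on cherche \`a exploiter.
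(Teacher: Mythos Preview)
Your proposal outlines a plausible general architecture --- a finite set of bounded-radius elements and an induction on radius --- but the heart of the argument is missing. The paper's proof works because of a concrete algebraic identity: for suitable clopens $U,V$ one has $[\sigma_V,\sigma_U^{-1}]=\sigma_{\vpi U\cap\vpi^{-1}V}$, where $\sigma_U$ is the $3$-cycle permuting $\vpi^{-1}U,U,\vpi U$. This commutator relation is what allows one to manufacture the $3$-cycle on a longer cylinder $\Cyl(I_n,h)$ from the $3$-cycles on two shorter cylinders $\Cyl(I_{n-1},h_\pm)$, and hence to close the induction. Your proposal contains no such mechanism: ``Kakutani--Rokhlin towers'' and ``configurations de bord'' are invoked, but you never say how an element of larger radius is actually expressed as a word in elements of smaller radius. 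Without an identity of this type (or an equivalent reduction), the inductive step has no content.

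There are also two concrete problems with your setup. First, your generating set $\mathcal{S}_{n_0}$ consists of products $\tau_1\tau_2$ of involutions with disjoint supports; such an element is itself an involution, and nothing guarantees it lies in $\lb\vpi\rb'$ (the quotient $\lb\vpi\rb^0/\lb\vpi\rb'$ is a nontrivial elementary abelian $2$-group, so two disjoint involutions need not have the same image there). Second, the appeal to the corolaire~\ref{cor_trans} does not give what you claim: that corollary concerns transitivity on orbits, not that elements of $\lb\vpi\rb'$ act as even permutations on Kakutani--Rokhlin columns. The paper circumvents both issues by working with $3$-cycles $\sigma_U$ from the start (they are commutators, hence automatically in $\lb\vpi\rb'$) and by first proving, via simplicity of $\lb\vpi\rb'$, that these $3$-cycles generate all of $\lb\vpi\rb'$ (lemme~\ref{eng_cb}); only then does the commutator identity above reduce to a finite subfamily.
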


On commence par deux lemmes. Rappelons qu'un sous-d\'ecalage $X\subset A^\Z$ est dit $k$-{\em propre} si pour tout $w\in X$ et tous $m,n$ tels que $0<|m-n|\le k$ on a $w(m)\neq w(n)$. 

\begin{lem}\label{4p}
Soit $d\ge 1$. Tout sous-d\'ecalage $(X,\vpi)$ sur $\Z$ sans orbite de cardinal au plus $d$ est isomorphe \`a un sous-d\'ecalage $d$-propre (sur un autre alphabet).
\end{lem}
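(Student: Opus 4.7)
Le plan est d'utiliser un codage par blocs. Pour un entier $L \ge 1$, on pose $B = A^L$ et on d\'efinit
$$\Psi_L : X \to B^\Z, \quad \Psi_L(w)(n) = (w(n), w(n+1), \ldots, w(n+L-1)).$$
Cette application, continue, $\Z$-\'equivariante et injective (on r\'ecup\`ere $w(n)$ en lisant la premi\`ere coordonn\'ee de $\Psi_L(w)(n)$), est par compacit\'e un hom\'eo\-mor\-phisme de $X$ sur son image $\tilde X \subset B^\Z$; comme $\tilde X$ est ferm\'e et $\Z$-invariant dans $B^\Z$, c'est un sous-d\'ecalage conjugu\'e \`a $(X, \vpi)$. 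L'objectif est de choisir $L$ assez grand, en fonction de $d$, pour que $\tilde X$ soit $d$-propre.

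L'observation centrale est la suivante: pour $m < n$ avec $p := n - m \in \{1, \ldots, d\}$, l'\'egalit\'e $\Psi_L(w)(m) = \Psi_L(w)(n)$ \'equivaut \`a $w(j + p) = w(j)$ pour tout $j \in \{m, \ldots, m + L - 1\}$, autrement dit \`a l'existence d'un intervalle de longueur $L$ sur lequel $w$ est $p$-p\'eriodique. Il suffit donc de produire un entier $N(d)$ tel que, pour tout $w \in X$ et tout $p \in \{1, \ldots, d\}$, l'ensemble $\{j \in \Z : w(j+p) = w(j)\}$ ne contienne aucun intervalle de longueur $\ge N(d)$; on prendra alors $L = N(d)$.

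Pour obtenir cette borne uniforme, on fixe $p \in \{1, \ldots, d\}$ et on consid\`ere, pour chaque $N \ge 0$, le ferm\'e
$$Y_{p, N} = \{w \in X : w(j + p) = w(j) \text{ pour } |j| \le N\} \subset X.$$
La famille $(Y_{p,N})_N$ est d\'ecroissante, d'intersection l'ensemble des points fixes de $\vpi^p$ dans $X$, lequel est vide par hypoth\`ese (une telle orbite serait de cardinal $\le p \le d$). La compacit\'e de $X$ fournit donc un $N_p \ge 0$ avec $Y_{p, N_p} = \emptyset$; compte tenu de la $\Z$-invariance de $X$, cela force que, dans tout $w \in X$, l'ensemble $\{j : w(j+p) = w(j)\}$ ne contient aucun intervalle de longueur $\ge 2 N_p + 1$. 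On prend alors $N(d) = 2 \max_{1 \le p \le d} N_p + 1$, et la conclusion suit. Le point cl\'e, et l'\'el\'ement probablement le plus d\'elicat, est cet argument de compacit\'e qui transforme l'absence d'orbites courtes en une borne uniforme sur la longueur des plages de p\'eriodicit\'e locale.
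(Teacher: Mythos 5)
Votre preuve est correcte et suit essentiellement la m\^eme strat\'egie que celle du texte: on y raffine la partition de d\'epart en une partition finie en clouverts $(V_b)$ telle que les $\vpi^i(V_b)$, $0\le i\le d$, soient deux \`a deux disjoints (ce qui est exactement la condition de $d$-propret\'e du recodage), l'existence d'un tel raffinement reposant sur le m\^eme argument de compacit\'e que le v\^otre, \`a savoir que l'absence de points p\'eriodiques de p\'eriode $\le d$ exclut uniform\'ement les plages de $p$-p\'eriodicit\'e locale trop longues. Votre codage par blocs glissants n'est que le cas particulier o\`u le raffinement est pris parmi les cylindres de longueur fixe $L$, ce qui rend la construction plus explicite sans en changer la substance.
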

\begin{proof}
Par hypoth\`ese (cf. le fait \ref{fait_sd}), $X$ est compact totalement s\'epar\'e et il existe une partition de $X$ en clouverts, index\'ee par un ensemble fini $A$, dont les $\Gamma$-translat\'es s\'eparent les points. Notons $\vpi$ l'auto\-hom\'eo\-mor\-phisme de d\'ecalage. Par hypoth\`ese, pour tout $x\in X$, les \'el\'ements $x,\vpi(x),\dots,\vpi^d(x)$ sont deux \`a deux distincts. Donc on peut raffiner la partition en une partition en clouverts $(V_b)_{b\in B}$, index\'ee par un ensemble fini $B$ plus grand, tel que pour tout $b$, les $\vpi^i(V_b)$, pour $i=0,\dots,d$, sont deux \`a deux disjoints. La r\'ealisation correspondante comme sous-d\'ecalage sur $B^\Gamma$ est donc, par d\'efinition, $d$-propre.
\end{proof}

\begin{defn}Si $I$ est une partie finie de $\Z$, on dit qu'un clouvert $U$ est $I$-{\em bon} si les $(\vpi^i(U))_{i\in I}$ sont deux \`a deux disjoints. On abr\`ege {\og$\{-1,0,1\}$-bon\fg}  en {\og bon\fg}. \'etant donn\'e un clouvert $U\subset X$ bon, on d\'efinit $\sigma_U\in\lb\vpi\rb$ comme l'\'el\'ement d'ordre au plus 3 qui vaut l'identit\'e hors de $\vpi^{-1}U\cup U\cup\vpi U$ et permute cycliquement:
$$U\stackrel{\vpi}\longrightarrow\vpi(U)\stackrel{\vpi^{-2}}\longrightarrow\vpi^{-1}(U)\stackrel{\vpi}\longrightarrow U.$$\label{d_bon}
\end{defn}

\begin{lem}\label{eng_cb}
Soit $(X,\vpi)$ un sous-d\'ecalage minimal infini sur $\Z$.
Le groupe $\lb\vpi\rb'$ est engendr\'e par les $\sigma_U$, o\`u $U$ parcourt les clouverts bons.
\end{lem}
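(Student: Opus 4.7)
Soit $H \subset \lb\vpi\rb$ le sous-groupe engendré par les $\sigma_U$, $U$ parcourant les clouverts bons. Le plan est de montrer que $H$ est contenu dans $\lb\vpi\rb'$, non trivial, et distingué dans $\lb\vpi\rb$; le théorème~\ref{dersim} (qui affirme notamment que $\lb\vpi\rb'$ est contenu dans tout sous-groupe distingué non trivial de $\lb\vpi\rb$) forcera alors $H = \lb\vpi\rb'$. La non-trivialité étant immédiate puisque $X$ est infini et minimal, les deux autres propriétés s'obtiennent par un même procédé: raffiner le clouvert en jeu en une partition finie en clouverts sur chaque pièce desquels suffisamment de $\vpi$-translatés sont deux à deux disjoints, afin d'exhiber un plongement $\mathfrak{S}_n \hookrightarrow \lb\vpi\rb$ (comme dans la preuve du fait~\ref{fait_sn}) à l'intérieur duquel les identités cherchées se réduisent à des faits combinatoires dans $\mathfrak{A}_n$.

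Pour voir que $\sigma_U \in \lb\vpi\rb'$: puisque $\vpi$ est minimal sur un espace de Cantor infini, chaque orbite est infinie et tout $x \in U$ admet un voisinage clouvert $V_x \subset U$ tel que les $\vpi^i V_x$, $i \in \{-2,-1,0,1,2\}$, soient deux à deux disjoints; par compacité, $U$ se raffine en une partition finie $U = \bigsqcup_\alpha U_\alpha$ avec cette propriété sur chaque pièce. Les $\sigma_{U_\alpha}$ ayant des supports disjoints, $\sigma_U = \prod_\alpha \sigma_{U_\alpha}$; il suffit donc de traiter chaque facteur. Sur chaque $U_\alpha$, les cinq clouverts disjoints $\vpi^i U_\alpha$ supportent un plongement $\mathfrak{S}_5 \hookrightarrow \lb\vpi\rb$ par permutations rigides induites par des puissances de $\vpi$; $\sigma_{U_\alpha}$ y correspond à un $3$-cycle, donc à un élément de $\mathfrak{A}_5 = \mathfrak{A}_5'$, et appartient ainsi à $\lb\vpi\rb'$.

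Pour la normalité de $H$, il suffit de montrer $\psi\sigma_U\psi^{-1} \in H$ pour tout $\psi \in \lb\vpi\rb$ et tout bon $U$. Soit $\kappa: X \to \Z$ la fonction localement constante associée à $\psi$, caractérisée par $\psi(x) = \vpi^{\kappa(x)}(x)$. On raffine $U = \bigsqcup_\alpha U_\alpha$ de sorte que $\kappa$ prenne des valeurs constantes $a_\alpha, b_\alpha, c_\alpha$ sur $\vpi^{-1}U_\alpha, U_\alpha, \vpi U_\alpha$ respectivement. Alors $\psi\sigma_U\psi^{-1} = \prod_\alpha \psi\sigma_{U_\alpha}\psi^{-1}$, et un calcul direct donne que $\psi\sigma_{U_\alpha}\psi^{-1}$ est la permutation rigide (par puissances de $\vpi$) cyclant les trois clouverts disjoints $\vpi^{b_\alpha}U_\alpha, \vpi^{c_\alpha+1}U_\alpha, \vpi^{a_\alpha-1}U_\alpha$. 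Un second raffinement $U_\alpha = \bigsqcup_\beta U_\alpha^{(\beta)}$, par le même argument de minimalité et compacité, assure que les $\vpi^j U_\alpha^{(\beta)}$ pour $j$ dans un intervalle fixé $[M_\alpha, N_\alpha] \supset \{-1,0,1,a_\alpha-1,b_\alpha,c_\alpha+1\}$ soient deux à deux disjoints. Ces translatés fournissent un plongement $\mathfrak{S}_{N_\alpha-M_\alpha+1} \hookrightarrow \lb\vpi\rb$ dans l'image duquel $\psi\sigma_{U_\alpha^{(\beta)}}\psi^{-1}$ est un $3$-cycle (donc pair), tandis que les $3$-cycles consécutifs $\sigma_{\vpi^l U_\alpha^{(\beta)}}$ (pour $l-1, l, l+1 \in [M_\alpha, N_\alpha]$), tous dans $H$, engendrent classiquement l'image de $\mathfrak{A}_{N_\alpha-M_\alpha+1}$. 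Donc $\psi\sigma_{U_\alpha^{(\beta)}}\psi^{-1} \in H$ pour chaque $\alpha, \beta$, d'où $\psi\sigma_U\psi^{-1} = \prod_{\alpha,\beta} \psi\sigma_{U_\alpha^{(\beta)}}\psi^{-1} \in H$.

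Le principal obstacle réside dans l'orchestration de deux raffinements emboîtés de $U$ — le premier pour rendre $\psi$ localement égal à une puissance unique de $\vpi$ sur chacune des trois pièces $\vpi^{-1}U_\alpha, U_\alpha, \vpi U_\alpha$, le second pour disposer d'assez de translatés disjoints — et dans la vérification que les décompositions en facteurs commutants survivent à ces raffinements. Les deux ingrédients clés sont: d'une part, l'observation dynamique élémentaire que la minimalité sur un Cantor infini permet, étant donnée une partie finie $F \subset \Z$, de raffiner tout clouvert en une partition dont chaque pièce $V$ vérifie $V \cap \vpi^i V = \emptyset$ pour tout $i \in F \setminus \{0\}$; d'autre part, le fait combinatoire classique que les $3$-cycles consécutifs $(l-1,l,l+1)$ engendrent le groupe alterné $\mathfrak{A}_n$.
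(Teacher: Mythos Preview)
Ta preuve est correcte. Les deux approches reposent sur les m\^emes ingr\'edients --- le th\'eor\`eme~\ref{dersim}, le raffinement d'un clouvert pour obtenir de nombreux $\vpi$-translat\'es disjoints, et le fait que les $3$-cycles cons\'ecutifs $(l-1,l,l+1)$ engendrent $\mathfrak{A}_n$ --- mais les organisent diff\'eremment. La preuve de l'article proc\`ede {\og de haut en bas\fg}: elle utilise la simplicit\'e pour affirmer que $\lb\vpi\rb'$ est engendr\'e par ses \'el\'ements d'ordre~$3$, d\'ecompose tout \'el\'ement d'ordre~$3$ en produit d'\'el\'ements {\og sp\'eciaux\fg} (permutant cycliquement trois $\vpi$-translat\'es d'un m\^eme clouvert) via les fibres de $x\mapsto(\kappa(x),\kappa(\vpi x),\kappa(\vpi^2 x))$, puis montre que chaque sp\'ecial est produit de $\sigma_{\vpi^i U}$. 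Toi, tu proc\`edes {\og de bas en haut\fg}: tu montres que $H$ est distingu\'e dans $\lb\vpi\rb$ en calculant explicitement $\psi\sigma_U\psi^{-1}$ (qui, apr\`es raffinement, devient justement un tel $3$-cycle sp\'ecial), puis tu invoques la seconde partie du th\'eor\`eme~\ref{dersim}. Ton approche \'evite l'analyse des \'el\'ements d'ordre~$3$ g\'en\'eraux et n'exige que de comprendre les conjugu\'es des g\'en\'erateurs $\sigma_U$; celle de l'article, en revanche, donne une recette plus explicite pour \'ecrire un \'el\'ement donn\'e de $\lb\vpi\rb'$ comme produit de $\sigma_U$.
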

\begin{proof}
Commen\c cons par observer que $\lb\vpi\rb'$ est engendr\'e par ses \'el\'ements d'ordre 3: en effet il existe dans $\lb\vpi\rb'$ des \'el\'ements d'ordre 3, et comme $\lb\vpi\rb'$ est simple (th\'eor\`eme \ref{dersim}), ils forment donc une partie g\'en\'eratrice. Disons qu'un \'el\'ement $\sigma$ d'ordre~3 est {\em sp\'ecial} s'il existe une partie \`a 3 \'el\'ements $I=\{i,j,k\}$ et un clouvert $U$ $I$-bon tel que $\sigma$ est l'identit\'e hors de $\bigcup_{i\in I}\vpi^i(U)$ et \'echange $\vpi^i(U)$, $\vpi^j(U)$ et $\vpi^k(U)$ par action de $\vpi^{j-i}$, $\vpi^{j-k}$, et $\vpi^{k-i}$. Si $\sigma$ est un \'el\'ement d'ordre 3 quelconque et $\kappa$ est la fonction \'etag\'ee associ\'ee, alors en regardant les fibres de la fonction $x\mapsto (\kappa(x),\kappa(\vpi(x)),\kappa(\vpi^2(x)))$, on voit que $\sigma$ est un produit d'\'el\'ements d'ordre 3 sp\'eciaux \`a supports disjoints. Soit maintenant $\sigma$ d'ordre 3 sp\'ecial, et $U$ et $I$ les parties correspondantes. Soit $J$ un segment entier contenant $I$. Alors il existe une partition $(U_k)$ de $U$ en clouverts $J$-bons; on peut \'ecrire $\sigma$ comme produit de ses restrictions aux $\bigcup_{i\in I}\vpi^i(U_k)$; ainsi on est ramen\'e au cas o\`u $\sigma$ a la propri\'et\'e suppl\'ementaire que $I$ est $J$-bon. On observe alors que pour tout intervalle entier $J$, le groupe altern\'e est engendr\'e par les 3-cycles $(i-1;i;i+1)$ compris dans $J$ (voir par exemple \cite[Lemma 5.1]{Ma1} pour une preuve). Or ce 3-cycle correspond \`a l'\'el\'ement $\sigma_{\vpi^i(U)}$; ainsi $\sigma$ est produit de tels \'el\'ements et de leurs inverses, ce qui termine la preuve.
\end{proof}

\begin{proof}[Preuve du th\'eor\`eme \ref{fg}]

On suppose $X$ infini, puisque sinon le r\'esultat est trivial. Par le lemme \ref{4p}, on peut supposer que $X\subset A^\Z$ est un sous-d\'ecalage 4-propre.

On commence par l'observation suivante: si $U,V\subset X$ sont des clouverts et si les ensembles $\vpi^{-1}U$, $U$, $\vpi U$, $\vpi^{-1}V$, $V$ et $\vpi V$ sont deux \`a deux disjoints {\em sauf} peut-\^etre $\vpi U$ et $\vpi^{-1}V$, alors, en notant $[s,t]=sts^{-1}t^{-1}$, on a l'\'egalit\'e
\begin{equation}\label{qeqz}[\sigma_V, \sigma_U^{-1}]=\sigma_{\vpi U\cap \vpi^{-1}V}.\end{equation}

Si $I$ est une partie finie non vide de $\Z$ et $f\in A^I$, on d\'efinit le {\em cylindre}
$$\Cyl(I,f)=\{w\in X\mid w|_I=f\}.$$
Remarquons que les cylindres sont des clouverts, et sont bons (d\'efinition \ref{d_bon}) car $X$ est 1-propre (et $I$ non vide); en outre, ils forment, en faisant varier $I$ et $f$, une base de la topologie de $X$.

On d\'efinit $\Lambda$ comme le sous-groupe de $\lb\Gamma\rb$ engendr\'e par l'ensemble fini constitu\'e des \'el\'ements de la forme $\sigma_{\Cyl(\{-1,0,1\},f)}$ o\`u $f\in A^{\{-1,0,1\}}$. On va prouver le th\'eor\`eme en montrant que $\Lambda=\lb\vpi\rb'$. Par le lemme \ref{eng_cb}, il suffit de v\'erifier que les $\sigma_U$, pour $U$ clouvert bon, sont dans $\Lambda$. En d\'ecomposant $U$, on voit qu'il suffit de le v\'erifier dans le cas o\`u $U$ est un cylindre $\Cyl(I_n,h)$, o\`u $I_n=\{-n,\dots,n\}$.

V\'erifions donc par r\'ecurrence sur $n\ge 1$ que pour tout $h\in A^{I_n}$ on a $\sigma_{\Cyl(I_n,h)}\in\Lambda$. C'est vrai par hypoth\`ese pour $n=1$ et supposons que $n\ge 2$ est que c'est d\'emontr\'e en de\c c\`a.
On note $\tau(n)=n+1$ la translation sur $\Z$, si bien que $\tau$ agit sur les fonctions partiellement d\'efinies sur $\Z$: si on a $f\in A^I$, alors $\tau\cdot f\in A^{\tau I}$ est d\'efinie par 
\mbox{$\tau\cdot f(n)=f(n-1)$} (on \'ecrit $\tau I=\tau(I)$ pour all\'eger les notations).
La fonction $\tau^{\pm 1}\cdot h$ est d\'efinie sur $\tau^{\pm 1}I_n$. Soit $h_\pm$ sa restriction \`a $I_{n-1}$. Notons $Y_\pm=\Cyl(I_{n-1},h_\pm)$. Alors 
on a $$\tau Y_-\cup\tau^{-1}Y_+\subset \Cyl(\{0\},h(0)).$$ Donc, par 4-propret\'e, on a
$$Y_+\cap \tau^i Y_+=Y_-\cap \tau^i Y_-=\emptyset\quad\text{et}\quad\tau^{j+1} Y_-\cap\tau^{k-1}Y_+=\emptyset$$
pour tous $i,j,k\in \Z$ tels que $1\le |i|\le 4$ et $1\le|j-k|\le 4$.
Les hypoth\`eses de (\ref{qeqz}) sont donc remplies, et on d\'eduit que 
$$\sigma_{\Cyl(I_n,h)}=\left[\sigma_{\Cyl(I_{n-1},h_-)}\,,\,\sigma_{\Cyl(I_{n-1},h_+)}^{-1}\right]$$
et par cons\'equent $\sigma_{\Cyl(I_n,h)}\in\Lambda$.
\end{proof}

\begin{rema}
Dans le meilleur des cas, $A$ a 6 \'el\'ements (\`a cause de la 4-propret\'e), cela fait une partie g\'en\'eratrice \`a $120=6\times 5\times 4$ \'el\'ements (ou $241=2\times 120+1$ si on veut une partie sym\'etrique avec 1), soit autant que d'injections de $\{-1,0,1\}$ dans $A$ ; c'est loin d'\^etre optimal; dans un cas particulier Matui \cite[\S 6]{Ma1} donne une partie g\'en\'eratrice \`a 4 \'el\'ements (voir aussi \cite[\S 3.1]{Ma2}).
\end{rema}

\subsection{Structure de l'ab\'elianis\'e}\label{abelianise}

Les r\'esultats de cette sous-partie sont principalement dus \`a 
Giordano-Putnam-Skau et Matui \cite{GPS,Ma1}\footnote{{\it (Ajout f\'evrier 
2020)} Voir aussi l'appendice \ref{ajout}}.

Soit $X$ un compact totalement s\'epar\'e non vide et $\vpi$ un auto\-hom\'eo\-mor\-phisme. Soit $x$ un point dont la $\vpi$-orbite est infinie. On note $\N=\{0,1,2\dots\}$ et $\N^c$ son compl\'ementaire dans $\Z$. On d\'efinit
\begin{eqnarray*}
\mo_x : \lb\vpi\rb & \to & \Z\\
\psi & \mapsto & \#\left(\vpi^{\N^c}(x)\cap\psi^{-1}(\vpi^{\N}(x))\right)-\#\left(\vpi^\N(x)\cap\psi^{-1}(\vpi^{\N^c}(x))\right)
\end{eqnarray*}
Intuitivement, c'est le {\og transfert global\fg}  d'\'el\'ements de gauche \`a droite dans l'orbite de $x$.

\begin{propp}\label{modp}L'application $\mo_x$ est un morphisme de groupes et ne d\'epend pas du choix de $x$ dans son orbite; on a $\mo_x(\vpi)=1$ (si bien que $\mo_x$ est surjectif). De plus, si $\Hom(\lb \vpi\rb,\Z)$ est muni de la topologie de la convergence ponctuelle, l'application $x\mapsto\mo_x$, d\'efinie sur la r\'eunion des orbites infinies, est continue. 
\end{propp}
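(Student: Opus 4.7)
The plan is to reinterpret $\mo_x(\psi)$ as a ``flux across $0$'' for a bounded-displacement permutation of $\Z$, and reduce every claim to finite sums. Fix $\psi\in\lb\vpi\rb$ and the associated continuous \'etag\'ee map $\kappa:X\to\Z$, so that $\psi(y)=\vpi^{\kappa(y)}(y)$; as $X$ is compact, $M=\sup_X|\kappa|$ is finite. Since the orbit of $x$ is infinite, $n\mapsto\vpi^n(x)$ is a bijection from $\Z$ onto that orbit; $\psi$ preserves the orbit, and in these coordinates acts as the permutation $\sigma=\sigma_\psi^x$ of $\Z$ given by $\sigma(n)=n+\kappa(\vpi^n(x))$, whose displacement is bounded by $M$. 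Under this identification $\vpi^\N(x)$ becomes $\N$, and
$$\mo_x(\psi)=\Phi(\sigma),\qquad \Phi(\sigma):=\#\{n<0:\sigma(n)\ge 0\}-\#\{n\ge 0:\sigma(n)<0\},$$
both cardinals being finite by the displacement bound. In particular $\mo_x(\vpi)=1$ follows at once from $\sigma(n)=n+1$.

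For the homomorphism property, rewrite $\Phi(\sigma)=\sum_{n\in\Z}\bigl(\mathbf{1}_\N(\sigma(n))-\mathbf{1}_\N(n)\bigr)$, a sum with only finitely many non-zero terms. Given $\sigma,\tau$ of bounded displacement, split and re-index (setting $m=\tau(n)$ in the first sum below):
$$\Phi(\sigma\tau)=\sum_n\bigl(\mathbf{1}_\N(\sigma\tau(n))-\mathbf{1}_\N(\tau(n))\bigr)+\sum_n\bigl(\mathbf{1}_\N(\tau(n))-\mathbf{1}_\N(n)\bigr)=\Phi(\sigma)+\Phi(\tau),$$
using that $\tau$ is a bijection. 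Applied to $\sigma_{\psi_1\psi_2}^x=\sigma_{\psi_1}^x\circ\sigma_{\psi_2}^x$, this yields $\mo_x(\psi_1\psi_2)=\mo_x(\psi_1)+\mo_x(\psi_2)$.

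Independence from $x$ in its orbit follows from the same trick: replacing $x$ by $\vpi^k(x)$ amounts to replacing $\N$ by $\{n\ge k\}$, and more generally, for any $B\subset\Z$ with $B\bigtriangleup\N$ finite,
$$\Phi_B(\sigma)-\Phi_\N(\sigma)=\sum_n\bigl((\mathbf{1}_B-\mathbf{1}_\N)(\sigma(n))-(\mathbf{1}_B-\mathbf{1}_\N)(n)\bigr)=0,$$
since $\mathbf{1}_B-\mathbf{1}_\N$ has finite support and $\sigma$ is bijective. For continuity, observe that if $n\ge M$ then $\sigma(n)\ge 0$, and if $n\le -M-1$ then $\sigma(n)<0$; hence only the indices $-M\le n<M$ contribute to $\mo_x(\psi)$. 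Thus $\mo_x(\psi)$ depends only on the finite tuple $\bigl(\kappa(\vpi^n(x))\bigr)_{-M\le n<M}$, which is locally constant in $x$ since $\kappa$ is \'etag\'ee (hence locally constant) and each $\vpi^n$ is continuous. Therefore $x\mapsto\mo_x(\psi)$ is locally constant for every $\psi$, which is exactly pointwise continuity into $\Hom(\lb\vpi\rb,\Z)$. The only delicate point is the bookkeeping: once the finiteness coming from bounded displacement is properly exploited, the homomorphism property, orbit-independence, and continuity all follow from the same telescoping manipulation.
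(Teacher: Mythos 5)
Your proof is correct and takes essentially the same route as the paper: the paper asserts the homomorphism property as ``a general fact about bounded-displacement permutations of $\Z$, left as an exercise to the reader'', and your telescoping identity $\Phi(\sigma)=\sum_{n}\bigl(\mathbf{1}_\N(\sigma(n))-\mathbf{1}_\N(n)\bigr)$ together with the re-indexing by $\tau$ is exactly that exercise, while your continuity argument (only the indices $-M\le n<M$ contribute, and these depend locally constantly on $x$) is the paper's. The one small divergence is orbit-independence: the paper deduces $\mo_{\vpi(x)}=\mo_x$ from the homomorphism property via $\mo_{\vpi(x)}(\psi)=\mo_x(\vpi^{-1}\circ\psi\circ\vpi)$ and commutativity of $\Z$, whereas you verify directly that $\Phi_B=\Phi_\N$ for every $B$ with $B\bigtriangleup\N$ finite --- a slightly stronger statement, and one that is in fact used again later in the text.
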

\begin{proof}
Que $\mo_x$ est un morphisme est un fait g\'en\'eral sur le groupe des permutations \`a d\'eplacement born\'e de $\Z$ et est laiss\'e en exercice au lecteur. Le fait que $\mo_x=\mo_{\vpi(x)}$ en d\'ecoule, car $\mo_{\vpi(x)}(\psi)=\mo_x(\vpi^{-1}\circ\psi\circ\vpi)$ pour tout $\psi\in\lb\vpi\rb$.

V\'erifions la deuxi\`eme assertion. Il faut montrer que si $(x_n)$ converge vers $x$ et $\psi$ est fix\'e alors $\mo_{x_n}(\psi)$ est \'egal, pour $n$ assez grand, \`a $\mo_x(\psi)$. Soit $m$ une borne sup\'erieure sur la fonction \'etag\'ee associ\'ee \`a $\psi$. Soit $(U_i)$ une partition en clouverts trivialisant $\psi$. Il existe $n_0$ tel que pour tout $|i|\le m$, l'\'el\'ement $\vpi^i(x_n)$ est dans la m\^eme composante de la partition $(U_i)$ que $x$. On en d\'eduit facilement que $\mo_{x_n}(\psi)=\mo_x(\psi)$ pour tout $n\ge n_0$.
\end{proof}

En particulier, s'il existe une $\vpi$-orbite dense (par exemple, $X$ est minimal), alors $\mo_x$ ne d\'epend pas de $x$, on le note alors $\mo$.

\begin{thm}\label{tab}
Si $X$ est un compact totalement s\'epar\'e et $\vpi$ agit minimalement, alors le noyau $\lb\vpi\rb^0=\Ker(\mo)$ est engendr\'e par ses \'el\'ements d'ordre fini.
\end{thm}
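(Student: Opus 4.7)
Plan. I would decompose any $\psi \in \lb\vpi\rb^0$ into finitely many torsion elements by pushing the combinatorics onto a Kakutani--Rokhlin tower partition adapted to $\psi$ and then invoking an elementary structure theorem for bounded-displacement permutations of $\Z$ of index zero. The inclusion $T \subset \lb\vpi\rb^0$, where $T$ is the subgroup generated by torsion elements, is automatic since any torsion $t$ satisfies $n\,\mo(t) = \mo(t^n) = 0$; the content is the reverse inclusion.

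Writing $\psi(x) = \vpi^{\kappa(x)}(x)$ with $\kappa : X \to \Z$ continuous and bounded by some $N$, minimality of $\vpi$ lets me choose a $\vpi$-omniscient clopen $U = \bigsqcup_j U_j$ with return times $h_j$ constant on $U_j$, with $h_j \gg N$, and such that $\kappa$ is constant on each level $\vpi^i(U_j)$ (this is exactly the Kakutani--Rokhlin setup already used in the proof of Lemma~\ref{l:gw}). The partition $X = \bigsqcup_j \bigsqcup_{0 \le i < h_j} \vpi^i(U_j)$ then reduces the action of $\psi$ to finite combinatorial data: a ``permutation-like'' rule on the labels $(j,i)$ coming from the values $\kappa^j_i$ of $\kappa$ on each level.

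Fixing a point $x$ with dense $\vpi$-orbit and identifying this orbit with $\Z$, the restriction of $\psi$ is a bounded-displacement permutation of $\Z$ of index $\mo(\psi) = 0$. Such a permutation is a product of finitely many pairwise-disjoint finite cycles: the index-zero condition rules out runaway bi-infinite cycles, whose presence would produce non-zero net flow across the cut $\Z = \N^c \sqcup \N$ used to define $\mo$. Because $\kappa$ is locally constant on each tower level, every such cycle is encoded by finite tower-level data and lifts canonically to a finite-order element of $\lb\vpi\rb$, namely the cyclic permutation of the finitely many clopens $\vpi^{i_1}(U_{j_1}), \dots, \vpi^{i_r}(U_{j_r})$ visited by the cycle. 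Stringing these lifts together gives a factorisation $\psi = \sigma_1 \cdots \sigma_r$ with each $\sigma_k$ torsion.

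The main obstacle is the globalisation of the cycle decomposition: producing it continuously over all of $X$, not just on one orbit. The Kakutani--Rokhlin rigidity supplies the combinatorial uniformity needed, since the displacement $\kappa$ and the tower shape depend only on the level label $(j,i)$ and there are only finitely many such labels; so the cycle structure extracted on a dense orbit is valid everywhere by continuity. Care is also required at transitions where a $\vpi$-orbit passes from one tower to the next (levels with $i$ close to $0$ or $h_j - 1$), but this involves only finitely many boundary configurations and is handled by a finite case analysis, again using $\mo(\psi) = 0$ to balance incoming and outgoing boundary strands.
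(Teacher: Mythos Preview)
Your argument has a genuine gap at the key step where you assert that a bounded-displacement permutation of $\Z$ with index zero ``is a product of finitely many pairwise-disjoint finite cycles''. This is false: consider the permutation $\sigma$ of $\Z$ defined by $\sigma(n)=n+2$ for $n$ even and $\sigma(n)=n-2$ for $n$ odd. It has displacement $2$, its index is $0$ (the flow of the even cycle across the cut $\N^c\sqcup\N$ is $+1$, that of the odd cycle is $-1$, and they cancel), yet it has two bi-infinite cycles. More generally, any pair of oppositely-oriented bi-infinite cycles contributes zero net index, so the index-zero hypothesis gives no control on the cycle structure. Such permutations do arise as the action of elements of $\lb\vpi\rb^0$ on an orbit (think of first-return maps on complementary clopens), so the gap is not repaired by minimality or by the Kakutani--Rokhlin combinatorics alone.

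The paper's proof avoids cycle analysis entirely. Fixing $x\in X$ and writing $I=\vpi^{\N^c}(x)\cap\psi^{-1}(\vpi^{\N}(x))$ and $J=\vpi^{\N}(x)\cap\psi^{-1}(\vpi^{\N^c}(x))$, the hypothesis $\mo(\psi)=0$ says exactly that $\#I=\#J$; both are finite subsets of $\vpi^{\{-m,\dots,m\}}(x)$. One then builds (via Fact~\ref{fait_sn}) an involution $s\in\lb\vpi\rb$ swapping $I$ and $J$ and supported away from $\vpi^{\{-2m,\dots,2m\}}(x)$, so that $s\psi$ stabilises $\vpi^{\N}(x)$. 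Putnam's lemma (Lemma~\ref{lputnam}) then gives that $s\psi$ lies in a locally finite group, hence has finite order; thus $\psi=s\cdot(s\psi)$ is a product of two torsion elements. The point is that local finiteness of the stabiliser $\lb\vpi\rb_{[x\rangle}$ absorbs all the infinite-cycle difficulties you were trying to handle by hand.
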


Cela implique en particulier que $\mo$ est l'unique morphisme de $\lb\vpi\rb$ vers $\Z$ envoyant $\vpi$ sur 1. Or, il existe d'autres mani\`eres de construire ce morphisme. Par exemple, si $\mu$ est une probabilit\'e bor\'elienne invariante, on v\'erifie directement que l'application $\iota_\mu:\psi\mapsto \int k_\psi d\mu$, o\`u $k_\psi$ est la fonction \'etag\'ee associ\'ee \`a $\psi$, est un morphisme vers $\R$ envoyant $\vpi$ sur 1, et il en d\'ecoule facilement que $\iota_\mu=\mo$ (et en particulier que
$\iota_\mu$ prend des valeurs enti\`eres et ne d\'epend pas de $\mu$).

Pour prouver le th\'eor\`eme \ref{tab}, on a besoin du lemme classique suivant, qui servira aussi dans la preuve de la moyennabilit\'e; il semble remonter \`a Putnam \cite[Sec.~5]{Pu}; m\^eme si tous les ingr\'edients de la preuve y sont contenus, le r\'esultat n'y est pas explicit\'e; cependant les papiers ult\'erieurs s'y r\'ef\`erent. Une preuve directe est donn\'ee par Juschenko et Monod dans \cite{JM} (voir aussi Matui \cite[Proposition 3.2]{Ma1}, dans un langage un peu diff\'erent mais \`a port\'ee plus g\'en\'erale).

\begin{lem}[Putnam]\label{lputnam}
Si $X$ est compact totalement s\'epar\'e et $\vpi$ est un auto\-hom\'eo\-mor\-phisme minimal, alors pour tout $x\in X$ le stabilisateur $\lb\vpi\rb_{[x\rangle}$ de l'orbite positive $\vpi^\N(x)$ est localement fini.
\end{lem}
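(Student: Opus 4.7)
Le plan est de d\'emontrer que tout sous-groupe $H=\langle\psi_1,\dots,\psi_k\rangle$ engendr\'e par un nombre fini d'\'el\'ements de $\lb\vpi\rb_{[x\rangle}$ est fini. Chaque $\psi_j$ est d\'etermin\'e par sa fonction \'etag\'ee associ\'ee $\kappa_j:X\to\Z$, continue sur le compact $X$, et donc born\'ee en valeur absolue par un entier $M$ (ind\'ependant de $j$, quitte \`a prendre le maximum). En identifiant la $\vpi$-orbite de $x$ \`a $\Z$ via $\vpi^n(x)\leftrightarrow n$, chaque $\psi_j$ agit sur $\Z$ par une permutation de d\'eplacement au plus $M$, qui pr\'eserve $\N$; par minimalit\'e l'orbite est dense dans $X$, donc par continuit\'e l'action du groupe plein-topologique sur cette orbite est fid\`ele, et $H$ se plonge dans le groupe (abstrait) de ces permutations de $\Z$.

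Dans un second temps, on utilise la minimalit\'e pour b\^atir une partition de Kakutani-Rokhlin bien adapt\'ee. On choisit un voisinage clouvert $V$ de $x$ suffisamment petit pour que les $\vpi^iV$ soient deux \`a deux disjoints pour $0\le i\le 2M$ (ce qui force tous les temps de retour $r_q$ \`a $V$, en nombre fini $p$, \`a v\'erifier $r_q\ge 2M+1$), et raffin\'e en sorte que chaque $\kappa_j$ soit constante sur chaque \'etage $\vpi^iV_q$ de la tour correspondante. L'orbite positive de $x$ se d\'ecoupe alors en une concat\'enation de blocs cons\'ecutifs $B_0,B_1,\dots\subset\N$ de longueurs $r_{q_0},r_{q_1},\dots$, chaque bloc correspondant \`a une travers\'ee compl\`ete d'une tour donn\'ee.

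L'\'etape-cl\'e consiste \`a montrer que $H$ respecte, au moins \`a partir d'un certain rang, ce d\'ecoupage en blocs, et que la permutation induite sur chaque bloc $B_\ell$ ne d\'epend que du type $q_\ell$ de la tour. Moyennant cela, $H$ se plonge dans le groupe fini $\prod_{q=1}^p\mathfrak{S}_{r_q}$, et la conclusion en d\'ecoule imm\'ediatement. L'obstacle principal se situe pr\'ecis\'ement l\`a: une composition $\psi_{i_1}\cdots\psi_{i_m}$ a un d\'eplacement a priori $\le mM$, qui peut largement exc\'eder les hauteurs $r_q$ des tours, si bien que des \'el\'ements de $H$ pourraient, en principe, franchir plusieurs fronti\`eres de blocs. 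Il faudra exploiter essentiellement l'invariance de $\N$ sous $H$---qui interdit tout {\og flux net\fg} \`a travers la fronti\`ere en $0$---combin\'ee \`a la rigidit\'e impos\'ee par la continuit\'e sur $X$: en effet, par minimalit\'e, tout \'el\'ement de $H$ est d\'etermin\'e par son action sur n'importe quelle queue de l'orbite positive. Une approche alternative, plus directe, suivie par Juschenko-Monod, consiste \`a analyser l'action au moyen de la fonction de premier retour $\vpi_V$: le probl\`eme se r\'eduit \`a l'\'etude d'un groupe de permutations d'un alphabet fini, pour lequel la finitude s'obtient par un argument combinatoire direct.
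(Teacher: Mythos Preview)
Your outline sets up a correct framework---identifying the orbit with $\Z$, the bounded-displacement action, and a block decomposition coming from minimality---but the key step is not proved, and in fact the blocks you define are not the right ones. With your Kakutani--Rokhlin blocks $B_\ell$ (defined by successive returns to $V=\bigcup_q V_q$), there is no reason for a generator $\psi_j$ to respect the boundary between $B_0$ and $B_1$: the hypothesis $\psi_j(\N)=\N$ controls the \emph{left} boundary of $B_0$, but says nothing about its right boundary unless the displacement pattern near position $r_{q_0}$ happens to replicate the pattern near position $0$, and your construction does not arrange this (the block $B_1$ may well have type $q_1\neq q_0$). Note also that your worry about \emph{compositions} having large displacement is a red herring: once each generator preserves each block, every word in the generators does too; the difficulty is already present for single generators.

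The paper's proof fixes this by choosing the block boundaries differently. Instead of cutting at returns to $V$, one cuts at the set $I\subset\Z$ of positions $p$ where the displacement pattern $\big(\kappa_j(\vpi^{p+i}(x))\big)_{0\le i<M,\;1\le j\le k}$ coincides with that at $p=0$; minimality (phrased as minimality of an auxiliary subshift on the alphabet $\{-M,\dots,M\}^k$) guarantees that $I$ is syndetic, so the gaps $P_n=[p_n,p_{n+1})$ between consecutive elements of $I$ have bounded length. For $0\le i<M$ one then has $\psi_j(p_n+i)-p_n=\psi_j(i)\ge 0$ because $\psi_j(\N)\subset\N$, while for $i\ge M$ the bound $|\psi_j(p_n+i)-(p_n+i)|\le M$ already gives $\psi_j(p_n+i)\ge p_n$; hence every generator maps each $P_n$ into $\bigcup_{n'\ge n}P_{n'}$, and by symmetry of the generating set it actually preserves each $P_n$. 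Your Kakutani--Rokhlin setup can be repaired along exactly these lines: cut at returns to the single piece $V_{q_0}\ni x$ rather than to all of $V$, so that every block begins with the same $M$-step pattern as $B_0$; then the argument above goes through verbatim and embeds $H$ into a finite product of symmetric groups.
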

\begin{proof}
Soit $i_x$ la fonction orbitale $n\mapsto \vpi^n(x)$; soit $j_x$ l'action de $\lb\vpi\rb$ sur $\Z$ obtenue \`a partir de celle sur l'orbite de $x$ via la bijection $i_x:\Z\to\vpi^\Z(x)$; autrement dit, pour $\psi\in\lb\vpi\rb$, on d\'efinit $j_x(\psi)$ comme la permutation $i_x^{-1}\circ \psi\circ i_x$ de $\Z$.

L'assertion du lemme revient \`a affirmer que toute partie finie $F$ (qu'on peut supposer sym\'etrique) de $\lb\vpi\rb_{[x\rangle}$ engendre un groupe agissant sur $\Z$ (via $j_x$) avec orbites finies. Posons $\tau_x(\psi)(n)=j_x(\psi)(n)-n$, si bien que $\tau_x(\psi):\Z\to\Z$ est une fonction born\'ee. Fixons $F$. Soient $$m=\sup_{\theta\in F,\,n\in\Z}|\tau_x(\theta)(n)|,\quad A=\{-m,\dots,m\}^F.$$ On a une application anti-\'equivariante naturelle $\alpha$ continue de $X$ dans le d\'ecalage $(A^\Z,T)$ (avec $T(w)(n)=w(n-1)$), qui envoie $y\in X$ sur la suite $n\mapsto (\theta\mapsto \tau_y(\theta)(n))$. Son image est un sous-d\'ecalage minimal. Ceci implique facilement (voir par exemple \cite[Theorem 7.2]{MH}) que si on d\'efinit $I$ comme l'ensemble des $n\in\Z$ tels que $T^{-n}\alpha(x)$ et $\alpha(x)$ co\"\i ncident sur $\{0,\dots,m-1\}$, alors $I$ est coborn\'e, au sens o\`u la fonction $d(\cdot,I)$ est born\'ee sur $\Z$. On peut indexer $I$ de fa\c con strictement croissante: 
\mbox{$I=\{p_n,n\in\Z\}$,} si bien que la suite $(p_{n+1}-p_n)_{n\in\Z}$ est born\'ee. On d\'efinit l'intervalle entier \mbox{$P_n=\{p_n,p_n+1,\dots,p_{n+1}-1\}$,} de telle mani\`ere que $\Z$ est la r\'eunion disjointe des $P_n$. Si $0\le k\le m-1$ et $\theta\in F$, alors \begin{align*}j_x(\theta)(p_n+k)-p_n= & \tau_x(\theta)(p_n+k)+k\\
= & T^{-p_n}\tau_x(\theta)(k)+k=T^{-p_n}\alpha(x)(k)(\theta)+k\\
=& \alpha(x)(k)(\theta)+k=\tau_x(\theta)(k)+k=j_x(\theta)(k)\ge 0 
\end{align*}
car $\theta\in \lb\vpi\rb_{[x\rangle}$. Comme les \'el\'ements de $F$ translatent les points d'au plus $m$, cela d\'emontre que $j_x(\theta)(P_n)\subset\bigcup_{n'\ge n}P_{n'}$ pour tout $\theta\in F$, et comme $F$ est suppos\'e sym\'etrique, le m\^eme argument montre que $j_x(\theta)(P_n)\subset\bigcup_{n'\le n}P_{n'}$, et par cons\'equent  chaque $P_n$ est stable par $j_x(F)$. Comme ils sont de cardinal born\'e, la preuve est termin\'ee.
\end{proof}

\begin{proof}[Preuve du th\'eor\`eme \ref{tab}]
Soit $\psi\in \lb\vpi\rb^0$. Soit $m$ une borne sup\'erieure sur la fonction \'etag\'ee associ\'ee. 
Soient $I=\vpi^{N^c}(x)\cap\psi^{-1}(\vpi^{\N}(x))$ et $J=\vpi^N(x)\cap\psi^{-1}(\vpi^{\N^c}(x))$. Par hypoth\`ese, $I$ et $J$ ont m\^eme cardinal et sont inclus dans $\vpi^{\{-m,\dots,m\}}(x)$. Le fait \ref{fait_sn} montre qu'il existe une involution $s\in\lb\vpi\rb$ qui \'echange les \'el\'ements de $I$ et les \'el\'ements de $J$, et dont la fonction \'etag\'ee associ\'ee est born\'ee par $2m$. Dans la preuve du fait, en choisissant leurs supports assez petits, on peut supposer qu'ils \'evitent l'ensemble fini $\vpi^{\{-2m,\dots,2m\}}(x)$. Par cons\'equent, l'\'el\'ement $s\psi$ stabilise $\vpi^\N(x)$. Par le lemme \ref{lputnam}, $s\psi$ est d'ordre fini.
\end{proof}

On va maintenant donner une version plus pr\'ecise du th\'eor\`eme \ref{tab}, due \`a Matui.

\begin{propp}\label{02}
Le groupe $\lb\vpi\rb^0$ est engendr\'e par ses \'el\'ements d'ordre 2. 
\end{propp}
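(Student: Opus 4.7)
Le plan consiste \`a combiner le th\'eor\`eme~\ref{tab} (qui affirme que $\lb\vpi\rb^0$ est engendr\'e par ses \'el\'ements de torsion) avec la d\'ecomposition classique d'un cycle de longueur $d$ en produit de $d-1$ transpositions. Remarquons d'abord que toute involution $s\in\lb\vpi\rb$ appartient automatiquement \`a $\lb\vpi\rb^0$: en effet $\mo(s)\in\Z$ v\'erifie $2\mo(s)=\mo(s^2)=0$, donc $\mo(s)=0$. Il suffit d\`es lors d'\'etablir que tout \'el\'ement de torsion de $\lb\vpi\rb$ est un produit d'involutions.

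Soit $\psi\in\lb\vpi\rb$ avec $\psi^n=1$; on peut supposer $X$ infini (le cas fini est \'el\'ementaire et d\'ecoule de la d\'ecomposition des permutations en transpositions). La minimalit\'e de $\vpi$ entra\^\i ne, puisque $X$ est infini, que son action est libre; pour tout $\theta=\vpi^{\kappa(\cdot)}\in\lb\vpi\rb$, l'ensemble de ses points fixes co\"\i ncide alors avec le clouvert $\{\kappa=0\}$. En appliquant cette remarque aux it\'er\'es $\psi^d$, les parties
\[
X_d=\bigl\{x\in X:\psi^d(x)=x\text{ et }\psi^i(x)\neq x\text{ pour }1\le i<d\bigr\},\quad d\mid n,
\]
sont des clouverts $\psi$-invariants formant une partition finie de $X$, et la restriction $\psi|_{X_d}$ est un auto\-hom\'eo\-mor\-phisme d'ordre $d$ de $X_d$ tel que $\psi^i|_{X_d}$ soit sans point fixe pour tout $1\le i\le d-1$.

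Fixons un diviseur $d$ de $n$. Le lemme~\ref{l:domaine} fournit alors un clouvert $U_d\subset X_d$ tel que $X_d=\bigsqcup_{i=0}^{d-1}\psi^i(U_d)$. Pour chaque entier $i$ v\'erifiant $1\le i\le d-1$, on d\'efinit $\tau_i\in\lb\vpi\rb$ comme l'involution qui \'echange les clouverts disjoints $U_d$ et $\psi^i(U_d)$ par action de $\psi^{\pm i}$ et vaut l'identit\'e hors de leur r\'eunion; cette d\'efinition est licite puisque $\psi^i\in\lb\vpi\rb$. L'identit\'e classique $(0,1,\dots,d-1)=(0,d-1)(0,d-2)\cdots(0,1)$ dans le groupe sym\'etrique, transport\'ee par la bijection $i\leftrightarrow\psi^i(U_d)$, donne alors l'\'egalit\'e $\psi|_{X_d}=\tau_{d-1}\tau_{d-2}\cdots\tau_1$ (o\`u $\psi|_{X_d}$ d\'esigne par abus l'\'el\'ement de $\lb\vpi\rb$ \'egal \`a $\psi$ sur $X_d$ et \`a l'identit\'e ailleurs, ce qui est licite car $X_d$ est clouvert). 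Les supports des $\psi|_{X_d}$ \'etant deux \`a deux disjoints lorsqu'on fait varier $d$, leur produit reconstitue $\psi$ et en fournit une \'ecriture comme produit d'involutions.

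Le seul point v\'eritablement non trivial est la v\'erification que les $X_d$ sont clouverts, ce qui repose sur la libert\'e de l'action de $\vpi$ (cons\'equence imm\'ediate de la minimalit\'e sur un espace infini); le reste s'en d\'eduit par une traduction directe de la combinatoire classique de la d\'ecomposition d'un cycle en transpositions.
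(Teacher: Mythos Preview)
Your proof is correct and follows essentially the same route as the paper's own argument: reduce to torsion elements via the th\'eor\`eme~\ref{tab}, split $X$ into the clouverts on which $\psi$ acts as a free $\Z/d\Z$-action, apply the lemme~\ref{l:domaine} to obtain a fundamental domain, and then transport the classical decomposition of a $d$-cycle into transpositions. The paper phrases the first reduction slightly differently (invoking the stronger fact, extracted from the proof of th\'eor\`eme~\ref{tab}, that every element of $\lb\vpi\rb^0$ is the product of an involution and an element of the locally finite group $\lb\vpi\rb_{[x\rangle}$), and it packages the cycle-into-transpositions step as an embedding of $\mathfrak{S}_d$ via the fait~\ref{fait_sn}; but the substance is identical.
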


\begin{proof}
Dans la preuve du th\'eor\`eme \ref{tab}, on a montr\'e, apr\`es avoir fix\'e 
\mbox{$x\in X$} que tout \'el\'ement de $\lb\vpi\rb^0$ est produit d'une involution et d'un \'el\'ement de $\lb\vpi\rb_{[x\rangle}$, qui est localement fini par le lemme \ref{lputnam}. Il suffit donc de montrer que tout \'el\'ement d'ordre fini est produit d'\'el\'ements d'ordre 2. L'argument du lemme \ref{decompo} montre que tout \'el\'ement d'ordre fini est produit (\`a supports disjoints) d'\'el\'ements $\psi_i$ d'ordre fini $n_i$ d\'efinissant une action libre d'un groupe cyclique $\Z/n_i\Z$ sur le support de $\psi_i$. Par le lemme \ref{l:domaine}, il existe un clouvert $U$ tel que le support de $\psi_i$ est union disjointe des $(\vpi(U))_{0\le i\le n_i-1}$. L'argument du fait \ref{fait_sn} montre qu'on peut plonger le groupe sym\'etrique sur $\{0,\dots,n_i-1\}$ dans $\lb\vpi\rb$, de fa\c con \`a envoyer le $n_i$-cycle $(012\dots)$ sur $\psi_i$. Donc $\psi_i$ est produit d'\'el\'ements d'ordre~2.
\end{proof}

\begin{cor}
Le quotient $\lb\vpi\rb^0/\lb\vpi\rb'$ est un 2-groupe ab\'elien \'el\'ementaire et l'ab\'elianis\'e $\lb\vpi\rb/\lb\vpi\rb'$ est isomorphe au produit direct de $\Z$ et de $\lb\vpi\rb^0/\lb\vpi\rb'$.\qed
\end{cor}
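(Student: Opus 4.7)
Le plan est de déduire ce corollaire directement de la proposition \ref{02} combinée au morphisme $\mo$. On observe d'emblée que $\lb\vpi\rb^0$ contient $\lb\vpi\rb'$, puisque le quotient $\lb\vpi\rb/\lb\vpi\rb^0 \simeq \Z$ est abélien. En particulier, comme $\lb\vpi\rb' \supset [\lb\vpi\rb^0,\lb\vpi\rb^0]$, le groupe $\lb\vpi\rb^0/\lb\vpi\rb'$ est un quotient de l'abélianisé de $\lb\vpi\rb^0$, donc est abélien.

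Pour la première assertion, la proposition \ref{02} affirme que $\lb\vpi\rb^0$ est engendré par ses involutions; leurs images dans le quotient abélien $\lb\vpi\rb^0/\lb\vpi\rb'$ sont d'ordre divisant $2$ et engendrent encore tout le groupe. Par conséquent, $\lb\vpi\rb^0/\lb\vpi\rb'$ est d'exposant divisant~$2$, c'est-à-dire un $2$-groupe abélien élémentaire.

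Pour la seconde assertion, je considère la suite exacte courte
\[
1 \longrightarrow \lb\vpi\rb^0/\lb\vpi\rb' \longrightarrow \lb\vpi\rb/\lb\vpi\rb' \longrightarrow \Z \longrightarrow 0
\]
induite par $\mo$. Comme $\mo(\vpi) = 1$, l'image $[\vpi]$ de $\vpi$ dans $\lb\vpi\rb/\lb\vpi\rb'$ engendre un sous-groupe cyclique infini: l'égalité $\mo(\vpi^n) = n$ montre que $\vpi^n \in \lb\vpi\rb' \subset \lb\vpi\rb^0$ force $n = 0$. Ce sous-groupe se projette isomorphiquement sur $\Z$ et ne rencontre $\lb\vpi\rb^0/\lb\vpi\rb'$ qu'en l'identité. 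Le groupe ambiant étant abélien, les deux sous-groupes $\langle [\vpi] \rangle$ et $\lb\vpi\rb^0/\lb\vpi\rb'$ commutent et engendrent l'ensemble, ce qui fournit la décomposition en produit direct souhaitée.

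Il n'y a ici aucune véritable difficulté: toute la substance technique a été épuisée dans la proposition \ref{02} et dans la définition de $\mo$; il ne reste qu'à manipuler des morphismes et une suite exacte scindée de manière essentiellement formelle.
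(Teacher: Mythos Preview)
Your proof is correct and is precisely the straightforward deduction the paper intends: the corollary is stated with a bare \qed, and the argument you give --- Proposition~\ref{02} forces the abelian quotient $\lb\vpi\rb^0/\lb\vpi\rb'$ to have exponent dividing~$2$, and the section $n\mapsto[\vpi]^n$ splits the short exact sequence induced by $\mo$ --- is exactly what is meant.
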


La classe d'isomorphie du groupe $\lb\vpi\rb^0/\lb\vpi\rb'$ est donc d\'etermin\'ee par sa dimension comme espace vectoriel sur $\Z/2\Z$. Celle-ci d\'epend, en g\'en\'eral, de $\vpi$. Matui \cite{Ma1} prouve que ce groupe est isomorphe au groupe des co-invariants sous $\vpi$ du groupe des fonctions continues de $X$ dans $\Z/2\Z$.

\subsection{Approximation et pr\'esentation infinie}\label{approxi}

\begin{defn}\label{lef}
Un \mbox{(semi-)}\penalty0 groupe $G$ est {\em finiment approximable} (ou LEF {\og locally embeddable into finite groups\fg}) s'il v\'erifie l'une des conditions \'equivalentes suivantes
\begin{itemize}
\item tout syst\`eme fini d'\'egalit\'es et in\'egalit\'es sans param\`etre ayant une solution dans $G$ a une solution dans un \mbox{(semi-)}\penalty0 groupe fini;
\item $G$ est isomorphe \`a un sous-groupe d'un ultraproduit de \mbox{(semi-)}\penalty0 groupes finis;
\item (pour $G$ groupe d\'enombrable) $G$ est isomorphe au quotient d'un groupe r\'esiduellement fini par un sous-groupe distingu\'e $N$ qui est r\'eunion d'une suite croissante de sous-groupes distingu\'es finis de $G$.
\end{itemize}
\end{defn}

Un groupe est finiment approximable comme groupe si et seulement s'il l'est comme \mbox{(semi-)}\penalty0 groupe. Un \mbox{(semi-)}\penalty0 groupe est finiment approximable si et seulement si tout ses sous-\mbox{(semi-)}\penalty0 groupes de type fini le sont. Un \mbox{(semi-)}\penalty0 groupe r\'esi\-duel\-lement fini est finiment approximable; la r\'eciproque est vraie pour les \mbox{(semi-)}\penalty0 groupes de pr\'esentation finie mais pas pour les groupes de type fini (ces observations sont dues \`a St\"epin \cite{St2}). Il d\'ecoule de la d\'efinition qu'ils forment une classe stable par passage aux sous-\mbox{(semi-)}\penalty0 groupes et limites inductives filtrantes. On va montrer le r\'esultat suivant.

\begin{thm}[Grigorchuk-Medynets \cite{GrMe}]\label{clef}
Pour tout $X$ compact totalement s\'epar\'e muni d'une action minimale de $\Z$, le semi-groupe (et donc le groupe) plein-topologique est finiment approximable.
\end{thm}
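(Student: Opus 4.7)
Le plan est de montrer directement la propri\'et\'e LEF en associant \`a toute partie finie $S\subset\lb\vpi\rb$ une action par permutations de l'ensemble fini $\Lambda$ des atomes d'une partition clouverte judicieusement choisie; la version semi-groupe s'ensuit en rempla\c cant $\mathfrak{S}(\Lambda)$ par le mono\"ide fini $\Lambda^\Lambda$. Soit $\bar S=S\cup\{1\}\cup\{\psi\eta:\psi,\eta\in S\}$, ensemble fini, et soit $M=\max_{\psi\in\bar S}\sup|\kappa_\psi|$. La minimalit\'e de $\vpi$ fournit, via une repr\'esentation de Bratteli-Vershik, une partition de Kakutani-Rokhlin $\mathcal{P}$ de $X$ en tours clouvertes, de base $U=\bigsqcup_{j,l}U_j^l$ et de hauteurs $h_j>2M$, satisfaisant: (i) chaque $\kappa_\psi$, $\psi\in\bar S$, est constante sur chaque atome $\vpi^i(U_j^l)$ de $\mathcal{P}$; (ii) $\vpi$ envoie bijectivement atomes sur atomes (autrement dit, pour chaque $(j,l)$, $\vpi^{h_j}(U_j^l)=U_{j'}^{l'}$ pour un unique $(j',l')$, et cette correspondance est bijective); (iii) la partition s\'epare les points t\'emoins n\'ecessaires pour distinguer les \'el\'ements de $\bar S$ deux \`a deux.

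Avec ces pr\'eparations, pour tout $\psi\in\bar S$ et tout atome $A$, on a $\psi(A)=\vpi^{\kappa_\psi(A)}(A)$, qui est encore un atome puisque $\vpi$ -- et donc toute puissance de $\vpi$ -- envoie atome sur atome par (ii). Ceci d\'efinit une application $\iota:\bar S\to\mathfrak{S}(\Lambda)$, $\psi\mapsto(A\mapsto\psi(A))$. Pour $\psi,\eta\in S$, on a $\iota(\psi\eta)=\iota(\psi)\circ\iota(\eta)$, car les deux c\^ot\'es envoient chaque atome $A$ sur l'atome $\psi(\eta(A))$; donc $\iota$ pr\'eserve la table de multiplication partielle de $S$. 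L'injectivit\'e de $\iota$ sur $\bar S$ vient de (iii) et de $h_j>2M$: si $\psi\neq\eta$ dans $\bar S$, il existe un atome $A$ o\`u $\kappa_\psi(A)\neq\kappa_\eta(A)$ avec $|\kappa_\psi(A)-\kappa_\eta(A)|\le 2M<h_j$, ce qui interdit la co\"incidence $\vpi^{\kappa_\psi(A)}(A)=\vpi^{\kappa_\eta(A)}(A)$. On obtient ainsi un plongement de la table de multiplication de $S$ dans le groupe fini $\mathfrak{S}(\Lambda)$, ce qui \'etablit la finitude approximative.

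L'obstacle principal est l'existence d'une partition de Kakutani-Rokhlin satisfaisant simultan\'ement (i)--(iii), et notamment la propri\'et\'e \og nested\fg{} forte (ii) qui n'est pas automatique. Dans le cadre standard des syst\`emes de Cantor minimaux, elle d\'ecoule des repr\'esentations de Bratteli-Vershik (Herman-Putnam-Skau 1992): on dispose d'une suite imbriqu\'ee de partitions de Kakutani-Rokhlin avec hauteurs tendant vers l'infini, chaque colonne \'etant construite par empilement de colonnes de la partition pr\'ec\'edente selon un diagramme de Bratteli, ce qui r\'ealise la correspondance sommet/base sous $\vpi^{h_j}$. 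Le raffinement suppl\'ementaire pour satisfaire (i) et (iii) se fait en subdivisant la base compatiblement avec la structure nest\'ee, op\'eration qui pr\'eserve les hauteurs des tours puisque le temps de premier retour \`a $U$ ne d\'epend que de $U$. Le cas semi-groupe se traite identiquement, en rempla\c cant le groupe sym\'etrique par le mono\"ide fini $\Lambda^\Lambda$, les \'el\'ements non bijectifs de $\lb\vpi\rb\os$ induisant des applications non injectives.
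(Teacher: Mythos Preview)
Your approach via Kakutani--Rokhlin towers is natural, but there is a genuine gap at condition~(ii). You require a finite clopen partition $\mathcal{P}$ of $X$ that is simultaneously (i) subordinate to the level sets of each $\kappa_\psi$, $\psi\in\bar S$, and (ii) globally permuted by~$\vpi$. These two requirements are in general incompatible. Condition~(ii) says precisely that the Boolean subalgebra of $\Clo(X)$ generated by $\mathcal{P}$ is finite and $\vpi$-invariant; condition~(i) forces that subalgebra to contain the level sets of each $\kappa_\psi$. As soon as $\bar S$ contains an element one of whose level sets has infinite $\vpi$-orbit in $\Clo(X)$ --- the generic situation for a non-odometer (D\'efinition~\ref{dodo}, Fait~\ref{carodo}) --- no such finite invariant partition exists. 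For instance, for the Sturmian system $\tilde r_\alpha$ of the introduction every power of $\vpi$ is again minimal, so the only finite clopen partition permuted by $\vpi$ is the trivial one, and (ii) with $h_j>2M$ is vacuous.

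Your Bratteli--Vershik justification conflates two different things. The nesting property says that each level-$(n{+}1)$ column is a stack of level-$n$ columns; it does \emph{not} say that the Vershik map sends level-$n$ atoms to level-$n$ atoms. At the top of a level-$n$ tower the image under $\vpi$ depends on the edge at level $n{+}1$, so a single top atom typically spreads over several base atoms. The paper sidesteps this obstruction entirely: it first reduces to a minimal subshift $Y$ via the factor map $\lb X\rt Y\rb\os\to\lb Y\rb\os$, then writes $Y$ as a decreasing intersection of irreducible subshifts of finite type, each with dense periodic points; residual finiteness of the corresponding semi-groups is then immediate, and LEF passes to the direct limit (Proposition~\ref{sdlef}, Lemme~\ref{intirr}). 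A tower argument \emph{can} be made to work, but one must replace your $\iota$ by a map built from a periodic approximation of $\vpi$ (wrapping tops back to their own bases) and then verify injectivity and partial multiplicativity by hand --- neither is automatic once $\vpi$ no longer genuinely permutes the atoms.
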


Notons que ce r\'esultat est faux pour un auto\-hom\'eo\-mor\-phisme arbitraire, voir l'exemple de la proposition \ref{h3}.

\begin{cor}[Matui \cite{Ma1}]\label{non_pf}
Pour $\Gamma=\Z$, consid\'erons un sous-d\'ecalage $(\Gamma,X)$ minimal infini. Soit $\Lambda$ un sous-groupe de $\lb\Gamma\rb$ (ou sous-semi-groupe de $\lb\Gamma\rb\os$) contenant $\lb\Gamma\rb'$. Alors il n'est pas de pr\'esentation finie.
\end{cor}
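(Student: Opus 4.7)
Le plan consiste \`a combiner le th\'eor\`eme \ref{clef} de finie approximabilit\'e avec la simplicit\'e et l'infinitude de $\lb\Gamma\rb'$ (th\'eor\`eme \ref{dersim} et fait \ref{fait_sn}), via la strat\'egie classique suivante: un (semi-)groupe de pr\'esentation finie et finiment approximable est n\'eces\-sairement r\'esi\-duel\-lement fini. Comme $\lb\Gamma\rb'$ est simple infini, il n'admet aucun morphisme non trivial vers un groupe fini, ce qui fournira la contradiction voulue.

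Concr\`etement, je commencerais par noter que la classe des (semi-)groupes finiment approximables \'etant stable par passage aux sous-(semi-)groupes, tout sous-(semi-)groupe $\Lambda$ de $\lb\Gamma\rb\os$ est lui-m\^eme finiment approximable. Ensuite je prouverais le lemme auxiliaire suivant: si $\Lambda$ admet une pr\'esentation finie $\langle S\mid R\rangle$ et est finiment approximable, alors pour tout $g\in\Lambda$ avec $g\neq 1$, il existe un morphisme $\phi:\Lambda\to F$ vers un (semi-)groupe fini tel que $\phi(g)\neq\phi(1)$. En effet, en appliquant la condition LEF au syst\`eme fini form\'e des \'egalit\'es impos\'ees par $R$ et de l'in\'egalit\'e $g\neq 1$ (lue entre les mots en $S$correspondants), on obtient un (semi-)groupe fini $F$ et une application $S\to F$ respectant les relations de $R$ et s\'eparant $g$ de $1$; par d\'efinition d'une pr\'esentation, cette application s'\'etend en le morphisme voulu.

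Je conclurais alors comme suit. Supposons par l'absurde que $\Lambda$ soit de pr\'esentation finie. Par le fait \ref{fait_sn}, le groupe $\lb\Gamma\rb'\subset \Lambda$ contient un \'el\'ement non trivial $g$; le lemme pr\'ec\'edent fournit un morphisme $\phi:\Lambda\to F$ vers un (semi-)groupe fini tel que $\phi(g)\neq\phi(1)$. La restriction $\phi|_{\lb\Gamma\rb'}$ est un morphisme de semi-groupes d\'efini sur un groupe; son image est donc automatiquement un sous-groupe fini de $F$ (d'unit\'e $\phi(1)$, puisque pour $h\in\lb\Gamma\rb'$ on a $\phi(h)\phi(h^{-1})=\phi(1)$). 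On obtient ainsi un morphisme de groupes non trivial du groupe simple infini $\lb\Gamma\rb'$ (th\'eor\`eme \ref{dersim} et fait \ref{fait_sn}) vers un groupe fini, ce qui est impossible.

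Tout le travail dynamique \'etant d\'ej\`a accompli dans le th\'eor\`eme \ref{clef}, le seul point de vigilance sera l'adaptation au cadre semi-groupoidal: bien qu'un morphisme de semi-groupes ne pr\'eserve pas n\'ecessairement l'unit\'e globale, sa restriction \`a $\lb\Gamma\rb'$ a une image qui reste un groupe, ce qui permet d'invoquer la simplicit\'e. C'est l\`a, \`a mon sens, la principale v\'erification technique; le reste est formel.
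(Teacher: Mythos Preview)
Your proof is correct and follows exactly the same route as the paper: $\Lambda$ is finiment approximable by th\'eor\`eme \ref{clef}, a finitely presented LEF (semi-)group is residually finite, and this contradicts the fact that the subgroup $\lb\Gamma\rb'$ is simple and infinite (th\'eor\`eme \ref{dersim}, fait \ref{fait_sn}). The only difference is that you unpack the implication ``pr\'esentation finie $+$ LEF $\Rightarrow$ r\'esiduellement fini'' (which the paper quotes as a known observation of St\"epin) and spell out the semi-group case more carefully, but the underlying argument is identical.
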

\begin{proof}
Par le th\'eor\`eme \ref{clef}, $\Lambda$ est finiment approximable; si par l'absurde il est de pr\'esentation finie, il en d\'ecoule qu'il est aussi r\'esiduellement fini, et donc que $\lb\Gamma\rb'$ est aussi r\'esiduellement fini. Or c'est un groupe simple par le th\'eor\`eme \ref{dersim}, et il est infini par le fait \ref{fait_sn}, ce qui est contradictoire.
(Cet argument, bas\'e sur la r\'esiduelle finitude, combine l'id\'ee de Grigorchuk-Medynets \cite{GrMe} d'utiliser la r\'esiduelle finitude, avec, en amont, l'approche de Matui qui a permis de montrer le th\'eor\`eme \ref{clef}.)
\end{proof}

\begin{rema}
Il est fructueux et naturel de penser \`a un r\'esultat de pr\'esen\-tation infinie $G=G_\infty$ comme un r\'esultat d'approximation: un groupe (de type fini) de pr\'esentation infinie peut \^etre approch\'e par des groupes $G_n$ obtenus en tronquant une pr\'esentation sur un nombre fini de g\'en\'erateurs, ou parfois en prenant des quotients $G_n/H_n$ de ces derniers (par exemple finis) qui ne sont pas quotients de~$G_\infty$.
\end{rema}

Le th\'eor\`eme \ref{clef} se prouve par sp\'ecification au cas des sous-d\'ecalages, auquel cas il est valable sous des conditions moins restrictives que la minimalit\'e. On a besoin d'introduire quelques notions classiques.

\begin{defn}
Dans un d\'ecalage $A^\Z$, un {\em motif} est une fonction partiellement d\'efinie, \`a domaine de d\'efinition un intervalle entier fini, de $\Z$ vers $A$, \`a translation pr\`es. Un {\em motif} d'un sous-d\'ecalage $X\subset A^\Z$ est {\em autoris\'e} ou {\em interdit} selon qu'il est ou non la restriction d'un \'el\'ement de $X$ \`a un intervalle entier fini.

On dit qu'un sous-d\'ecalage $X\subset A^\Z$ est {\em de type fini} s'il est d\'efini par un nombre fini de motifs interdits, autrement dit s'il existe un clouvert $U\subset A^\Z$ tel que $X=\bigcap_{n\in\Z}T^nU$, o\`u $T$ est la fonction de d\'ecalage sur $A^\Z$.

On dit qu'un sous-d\'ecalage $X\subset A^\Z$ est {\em irr\'eductible} si pour tout couple $(w_1,w_2)$ de motifs autoris\'es dans $X$, il existe un motif autoris\'e de $X$ admettant $w_1$ comme segment initial et $w_2$ comme segment terminal.
\end{defn}

Notons qu'un sous-d\'ecalage $X\subset A^\Z$ est caract\'eris\'e par l'ensemble de ses motifs autoris\'es. On v\'erifie ais\'ement qu'un sous-d\'ecalage de type fini irr\'eductible sur $\Z$ a un ensemble dense de points p\'eriodiques, et qu'un sous-d\'ecalage minimal sur $\Z$ est \'egalement irr\'eductible (voir \cite{MH}).

\begin{propp}\label{sdlef}
Pour tout sous-d\'ecalage irr\'eductible sur $\Z$, le semi-groupe (et donc le groupe) plein-topologique est finiment approximable. 
\end{propp}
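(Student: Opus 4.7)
Le plan est de montrer directement que toute partie finie $F\subset\lb\vpi\rb\os$, avec sa table de multiplication et ses inégalités mutuelles, se réalise dans un semi-groupe fini. L'idée-clé est de remplacer $X$ par un modèle cyclique fini, colorié par un long mot autorisé de $X$ fourni par l'irréductibilité, puis de transposer les éléments de $F$ en transformations de ce cycle. Il convient de noter que ce ne sera pas l'orbite d'un point périodique de $\vpi$ (un sous-décalage irréductible peut en être dépourvu), mais une structure cyclique construite \emph{à la main} à partir d'un mot autorisé.

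Étape préparatoire. J'écrirais $X\subset A^\Z$ et choisirais $N$ assez grand pour que, pour chaque $\psi\in F$ ainsi que pour chaque composé apparaissant dans les relations à préserver, la fonction étagée associée $\kappa_\psi$ ne dépende que de la restriction $x|_{[-N,N]}$. Chaque $\kappa_\psi$ provient alors d'une fonction $\tilde\kappa_\psi : W_N\to\Z$, où $W_N$ désigne l'ensemble fini des mots autorisés de longueur $2N+1$ dans $X$. Je noterais $D$ un majorant commun des $|\tilde\kappa_\psi|$, qui servira de marge pour la composition.

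Construction du cycle. En appliquant itérativement l'irréductibilité --- qui permet de {\og ponter\fg} deux mots autorisés quelconques --- je construirais un mot autorisé $W\in A^{[0,L]}$ de $X$ de longueur $L$ grande, jouissant de deux propriétés: (a) tout $u\in W_{N+D}$ apparaît comme sous-mot de $W$ à une position éloignée d'au moins $2N+2D$ des deux bords; (b) les segments $W|_{[0,2N+2D]}$ et $W|_{[L-2N-2D,L]}$ coïncident à translation près. Après identification de ce chevauchement, $W$ définirait un coloriage $\bar W:\Z/L'\Z\to A$ dont toute fenêtre cyclique de longueur $2N+1$ est un élément de $W_N$. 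Pour chaque $\psi\in F$, je poserais alors $\tilde\psi(i) = i + \tilde\kappa_\psi(\bar W|_{[i-N,i+N]}) \pmod{L'}$; cette transformation vit dans le monoïde fini des transformations du cycle.

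Vérification et obstacle. Pour une égalité $\psi_1\cdots\psi_r = \psi'$ dans $\lb\vpi\rb\os$, la règle de composition $\kappa_{\psi_1\psi_2}(x)=\kappa_{\psi_2}(x)+\kappa_{\psi_1}(\vpi^{\kappa_{\psi_2}(x)}(x))$ dans $X$ se transpose directement dans le modèle cyclique, parce que toute fenêtre de largeur $2(N+D)+1$ dans $\bar W$ (loin du joint) est un sous-mot d'un élément autorisé de $X$; par récurrence sur $r$, on obtient $\tilde\psi_1\cdots\tilde\psi_r = \tilde{\psi'}$. Pour une inégalité $\psi\neq\psi'$, un mot $u\in W_N$ tel que $\tilde\kappa_\psi(u)\neq\tilde\kappa_{\psi'}(u)$ apparaît, par la propriété (a), dans la zone centrale de $\bar W$, ce qui assure la distinction à la position correspondante. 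L'obstacle principal sera précisément le contrôle près du joint du cycle: il faut que les fenêtres de lecture successives associées à un produit de longueur bornée restent à l'intérieur de la zone sûre, ce qui motive la marge $2N+2D$ introduite en (a). Une fois ce point délicat justifié, on aura que $\lb\vpi\rb\os$ est finiment approximable, et l'énoncé pour $\lb\vpi\rb$ en découlera par l'équivalence entre les versions {\og semi-groupe\fg} et {\og groupe\fg} de la condition LEF signalée juste après la définition \ref{lef}.
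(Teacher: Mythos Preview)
Ton approche est correcte et prend une route nettement différente de celle du papier. Le papier écrit $X$ comme intersection décroissante de sous-décalages de type fini irréductibles $X_n$ (lemme~\ref{intirr}), observe que chacun a des points périodiques denses, si bien que $\lb X_n\rb\os$ est résiduellement fini, puis montre que l'application naturelle $\underrightarrow{\lim}\,\lb X_n\rb\os\to\lb X\rb\os$ est un isomorphisme; la stabilité de l'approximabilité finie par limites inductives conclut. Ton argument, plus constructif, évite le détour par les SFT et fabrique directement un modèle fini pour chaque partie finie $F$; en contrepartie il exige un contrôle combinatoire explicite que le papier délègue à l'existence de points périodiques dans les $X_n$.

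Une rectification sur ce contrôle: tu présentes le joint du cycle comme l'obstacle principal et invoques la marge de (a) pour le régler. C'est en réalité le chevauchement (b) qui fait le travail: avec un recouvrement de longueur $2N+2D+1$, \emph{toute} fenêtre de longueur $2(N+D)+1$ dans $\bar W$, y compris celles traversant le joint, est un sous-mot de $W$ et donc autorisée dans $X$. La relation $\tilde\psi_1\tilde\psi_2=\widetilde{\psi_1\psi_2}$ vaut alors en toute position du cycle --- ce qui est indispensable, puisque $\psi\mapsto\tilde\psi$ doit respecter le produit partout sur $\Z/L'\Z$, et non seulement {\og loin du joint\fg}. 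La propriété (a), elle, sert aux inégalités: elle garantit que chaque mot séparateur $u\in W_N$ apparaît effectivement dans le cycle. Deux points à expliciter pour que la rédaction soit complète: d'une part l'action n'est pas libre en général, donc {\og la\fg} fonction étagée associée est en fait un \emph{choix} fixé de $\kappa_\psi$ pour chaque $\psi$; d'autre part $L'>2D$ assure que $\tilde\kappa_\psi(u)\neq\tilde\kappa_{\psi'}(u)$ dans $\Z$ entraîne bien $\tilde\psi(i)\neq\tilde\psi'(i)$ modulo $L'$.
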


\begin{lem}\label{intirr}
Soit $X\subset A^\Z$ un sous-d\'ecalage de type fini irr\'eductible sur $\Z$ et non vide. Alors $X$ est l'intersection d\'ecroissante d'une suite de sous-d\'ecalages de type fini irr\'eductibles.
\end{lem}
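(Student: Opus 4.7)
Le plan est de construire la suite d\'ecroissante de mani\`ere explicite. (Comme l'\'enonc\'e, lu \`a la lettre, est trivial en prenant $Y_n=X$ pour tout $n$, je l'interpr\`ete sous la forme substantielle utile pour la proposition \ref{sdlef}, \`a savoir que l'hypoth\`ese sur $X$ est seulement d'\^etre un sous-d\'ecalage irr\'eductible non vide.)

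Pour chaque $n\ge 1$, je prendrais $Y_n\subset A^\Z$ comme le sous-d\'ecalage de type fini dont l'ensemble des motifs interdits est exactement $A^n\smallsetminus L_n(X)$, o\`u $L_n(X)$ d\'esigne l'ensemble des motifs de longueur $n$ autoris\'es dans $X$; autrement dit, $w\in Y_n$ si et seulement si tout facteur de longueur $n$ de $w$ appartient \`a $L_n(X)$. Le plan comporte trois v\'erifications de routine et une \'etape substantielle. Routine: $Y_n\supset X$ par d\'efinition; $Y_{n+1}\subset Y_n$ car tout facteur de longueur $n$ d'un \'el\'ement de $Y_{n+1}$ est facteur d'un facteur de longueur $n+1$ dans $L_{n+1}(X)$, donc appartient \`a $L_n(X)$; et $\bigcap_n Y_n=X$ car tout $w$ dans l'intersection a tous ses facteurs finis dans $L(X)$, donc est limite (dans $A^\Z$) d'\'el\'ements de $X$, qui est ferm\'e.

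L'\'etape substantielle est l'irr\'eductibilit\'e de chaque $Y_n$. Partant de $u,v\in L(Y_n)$, j'\'etendrais d'abord $u$ \`a droite et $v$ \`a gauche dans $L(Y_n)$ (ce qui est possible puisque $Y_n\supset X$ est non vide) de fa\c{c}on que tous deux aient une longueur $\ge n$. En notant $u'$ le suffixe de longueur $n$ de $u$ et $v'$ le pr\'efixe de longueur $n$ de $v$, on a par construction $u',v'\in L_n(X)$. L'irr\'eductibilit\'e de $X$ fournit alors un mot $s$ avec $u'sv'\in L(X)$, et je montrerais que $u s v\in L(Y_n)$ par un examen cas par cas: tout facteur de longueur $n$ de $usv$ est contenu soit dans $u$, soit dans $u'sv'$, soit dans $v$, le cas interm\'ediaire provenant pr\'ecis\'ement de ce que les ancres $u'$, $v'$ ont longueur exactement $n$, donc absorbent tout facteur de longueur $n$ chevauchant l'un des deux raccords.

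L'obstacle principal est cette derni\`ere \'etape, et le point crucial est de prendre les ancres de longueur \'egale \`a l'ordre $n$ du sous-d\'ecalage de type fini: toute ancre plus courte laisserait des facteurs de longueur $n$ chevaucher les raccords sans contr\^ole, alors qu'une ancre de longueur $n$ force chacun de ces facteurs \`a \^etre contenu dans $u'sv'\in L(X)$, ce qui permet la r\'eduction \`a l'irr\'eductibilit\'e de $X$ lui-m\^eme.
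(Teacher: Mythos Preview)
Your proof is correct and is essentially identical to the paper's own argument: the paper also takes $Y_n=X_{P_n}$ with $P_n=A^n\smallsetminus L_n(X)$, reduces irreducibility of $Y_n$ to that of $X$ by anchoring with the length-$n$ suffix and prefix, and concludes that $w_1uw_2$ is autoris\'e in $Y_n$ by exactly the overlap analysis you describe. Your reading of the statement is also the right one --- the paper's proof nowhere uses the hypothesis ``de type fini'' on $X$, and the lemma is applied in the proof of Proposition~\ref{sdlef} to a general irreducible subshift, so the substantive version is indeed the one you prove.
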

\begin{proof}
Pour tout ensemble de mots $Z$, on note $X_Z$ le sous-d\'ecalage d\'efini par l'ensemble de mots interdits $Z$.
Pour $n\ge 1$, soit $P_n$ l'ensemble des motifs interdits de taille $n$. Clairement $X=\bigcap_{n\ge 1}X_{P_n}$ et chaque $X_{P_n}$ est un sous-d\'ecalage de type fini; il reste \`a v\'erifier que $X_{P_n}$ est irr\'eductible. Consid\'erons des motifs $w_1$ et $w_2$ de $X_{P_n}$, qu'on peut supposer de taille au moins $n$. Soient $u_1$ le segment terminal de taille $n$ de $w_1$ et $u_2$ le segment initial de taille $n$ de $w_2$. Comme $u_1$ et $u_2$ sont des motifs de $X$, il existe, par irr\'eductibilit\'e, un motif $v$ de $X$ ayant $u_1$ pour segment initial et $u_2$ pour segment terminal. En \'ecrivant $v=u_1uu_2$, on en d\'eduit que $w_1uw_2$ est autoris\'e dans $X_{P_n}$, ce qui d\'emontre que $X_{P_n}$ est irr\'eductible.
\end{proof}

\begin{proof}[Preuve de la proposition \ref{sdlef}]
Si $X$ est fini, il n'y a rien \`a d\'emontrer. On suppose donc $X$ infini. Il d\'ecoule alors de l'irr\'eductibilit\'e que $X$ est sans point isol\'e.

Gr\^ace au lemme \ref{intirr}, on \'ecrit $X=\bigcap X_n$ (intersection d\'ecroissante) o\`u $X_n\subset A^\Z$ est un sous-d\'ecalage ayant un ensemble dense de points p\'eriodiques. On affirme que le morphisme naturel $\underrightarrow{\lim}\lb X_n\rb\os\to \lb X\rb\os$ est un isomorphisme. La surjectivit\'e d\'ecoule de celle de $\lb X_n\rb\os\to \lb X\rb\os$ pour tout $n$; montrons l'injectivit\'e. Par la surjectivit\'e de $\lb A^\Z\rb\os\to\lb X\rb\os$, cela revient \`a v\'erifier que si $\psi,\psi'\in \lb A^\Z\rb\os$ et $\psi$ et $\psi'$ co\"\i ncident sur $X$, alors ils co\"\i ncident sur $X_n$ pour $n$ assez grand. Il suffit, par compacit\'e, de le montrer localement sur $x$. En supposant le contraire, il existe donc $x\in X$ tel que pour tout voisinage clouvert $V$ de $x$ et tout $n$, les fonctions $\psi$ et $\psi'$ ne co\"\i ncident pas sur $V\cap X_n$. On choisit $V$ sur qui $\psi$ et $\psi'$ sont des translations par des entiers distincts $k\neq k'$. Soit $\ell=|k-k'|$. Ainsi, tout \'el\'ement de $U\cap X$ est $\ell$-p\'eriodique, si bien que $U\cap X$ est fini, et donc $x$ est isol\'e dans $X$, ce qui est contradictoire.

Or $\lb X_n\rb\os$ agit sur $X_n$ avec un ensemble dense d'orbites finies, donc est r\'esi\-duellement fini. Comme \^etre finiment approximable est stable par limites inductives filtrantes, on en d\'eduit que $\underrightarrow{\lim}\lb X_n\rb\os$ l'est aussi, et donc $\lb X\rb\os$ est finiment approximable.
\end{proof}

\begin{rema}
Une preuve de la proposition \ref{sdlef} dans le cadre restreint aux groupes pleins-topologiques (au lieu des semi-groupes) aurait \'et\'e compliqu\'ee par le fait que les applications de restriction ne sont pas forc\'ement continues.
\end{rema}

\begin{proof}[D\'emonstration du th\'eor\`eme \ref{clef}] Montrons que le semi-groupe $\lb X\rb\os$ est 
finiment approximable. Il suffit de montrer que quelle que soit la partie finie $F$, il existe un morphisme $u$ du sous-semi-groupe engendr\'e par $F$ vers un groupe qu'on sait d\'ej\`a finiment approximable, et qui est injectif en restriction \`a $F$. On indexe injectivement $F=\{f_i\}$ et on choisit pour $i\neq j$ un \'el\'ement $x_{ij}\in X$ tel que $f_i(x_{ij})\neq f_j(x_{ij})$.
On consid\`ere une partition finie de $X$ en clouverts, sur lesquels les $f_i$  se trivialisent, et s\'eparant $f_i(x_{ij})$ et $f_j(x_{ij})$ pour tous $i\neq j$. On consid\`ere l'alg\`ebre bool\'eenne $\Z$-invariante engendr\'ee par cette partition, $Y$ le quotient correspondant. On voit que $Y$ est minimal et, par le fait \ref{fait_sd}, est un sous-d\'ecalage. On note
$\lb X\rt Y\rb\os$ l'ensemble des \'el\'ements de $\lb X\rb\os$ dont la fonction \'etag\'ee associ\'ee est constante sur les fibres de $X\to Y$ (ceci est d\'efini en toute g\'en\'eralit\'e ($\Gamma$ quelconque) par W.~Krieger \cite{Kri}). Ainsi on a un morphisme $u$ de semi-groupes canonique $\lb X\rt Y\rb\os\to\lb Y\rb\os$; on remarque que $F\subset \lb X\rt Y\rb\os$ et que par construction $f_i$ et $f_j$ ont des images distinctes par cette projection, pour tous $i\neq j$. Comme $\lb Y\rb\os$ est finiment approximable par la proposition \ref{sdlef}, on en d\'eduit que $\lb X\rb\os$ l'est aussi.
\end{proof}

\subsection{Sous-groupes ab\'eliens libres et sous-semi-groupes libres}\label{sgl}

Dans tout groupe, il est naturel de s'int\'eresser \`a la question d'existence de sous-groupes et sous-semi-groupes libres. On commence par la proposition suivante.

\begin{propp}
Soient $X$ un compact totalement s\'epar\'e infini et $\vpi$ un auto\-hom\'eo\-mor\-phisme minimal. Alors $\lb\vpi\rb'$ n'est pas de torsion; il contient en fait un groupe ab\'elien libre de rang infini.
\end{propp}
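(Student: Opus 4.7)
Le plan consiste à construire une famille infinie $(\alpha_n)_{n\ge 1}$ d'éléments d'ordre infini dans $\lb\vpi\rb'$, à supports deux à deux disjoints et préservés, qui commutent donc, et qui engendrent librement un sous-groupe abélien libre de rang infini. L'élément modèle, pour tout couple $(A,B)$ de clouverts non vides disjoints de $X$, est
\[\alpha_{A,B}:=(\vpi_A\vpi_B^{-1})^2\in\lb\vpi\rb,\]
où $\vpi_A$ désigne la fonction de premier retour sur $A$. Par minimalité et compacité, $k_{A,\vpi}$ est borné sur $A$ (un sous-recouvrement fini de $X$ par les $\vpi^{-n}(A)$ donne la borne), donc $\vpi_A\in\lb\vpi\rb$ et $\alpha_{A,B}\in\lb\vpi\rb$.

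On vérifie ensuite trois propriétés de $\alpha_{A,B}$. Premièrement, $\alpha_{A,B}\in\lb\vpi\rb'$: la caractérisation $\mo(\psi)=\int k_\psi\,d\mu$ (pour une probabilité $\vpi$-invariante $\mu$) combinée au lemme de Kac ($\int_A k_{A,\vpi}\,d\mu=1$) donne $\mo(\vpi_A)=1$, donc $\vpi_A\vpi_B^{-1}\in\lb\vpi\rb^0$; son carré tombe alors dans $\lb\vpi\rb'$ puisque $\lb\vpi\rb^0/\lb\vpi\rb'$ est un $2$-groupe abélien élémentaire (proposition~\ref{02} et son corollaire). Deuxièmement, $\alpha_{A,B}$ est d'ordre infini: comme $A\cap B=\emptyset$, l'élément $\vpi_A\vpi_B^{-1}$ coïncide sur $A$ avec $\vpi_A$ et sur $B$ avec $\vpi_B^{-1}$, donc $\alpha_{A,B}$ coïncide sur $A$ avec $(\vpi_A|_A)^2$; or l'auto\-homéo\-mor\-phisme induit $\vpi_A|_A$ est minimal sur le clouvert $A$, qui est infini car $X$ est sans point isolé (propriété immédiate pour un compact infini muni d'un auto\-homéo\-mor\-phisme minimal), ce qui interdit toute périodicité de $\vpi_A|_A$. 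Troisièmement, $\alpha_{A,B}$ a pour support $A\cup B$ et préserve $A$ et $B$ séparément --- un calcul direct.

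Pour conclure, on utilise l'absence de point isolé de $X$ pour construire une suite $(S_n)_{n\ge 1}$ de clouverts non vides deux à deux disjoints (par différences successives d'une suite strictement décroissante de voisinages clouverts d'un point fixé); dans chaque $S_n$ (encore sans point isolé), on choisit deux clouverts non vides disjoints $A_n,B_n\subset S_n$, et on pose $\alpha_n:=\alpha_{A_n,B_n}\in\lb\vpi\rb'$. Les supports $A_n\cup B_n\subset S_n$ étant deux à deux disjoints et préservés par l'élément correspondant, les $\alpha_n$ commutent deux à deux. L'indépendance s'obtient en restreignant toute relation $\prod\alpha_n^{k_n}=1$ (à support fini) à $A_m$, ce qui donne $(\vpi_{A_m}|_{A_m})^{2k_m}=\mathrm{id}$ et force $k_m=0$ par l'infinitude de l'ordre de $\vpi_{A_m}|_{A_m}$. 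L'obstacle principal est d'assurer l'appartenance de $\alpha_{A,B}$ à $\lb\vpi\rb'$ et non seulement au noyau $\lb\vpi\rb^0$; c'est le passage au carré, justifié par la $2$-torsion de $\lb\vpi\rb^0/\lb\vpi\rb'$, qui résout cette difficulté, le reste de la construction étant routinier.
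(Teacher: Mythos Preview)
Your argument is correct, but it follows a different route from the paper's, and it is worth comparing them.

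The paper proceeds by a direct, self-contained commutator construction: choosing a clopen $U$ disjoint from $V=\vpi(U)$ and the involution $\sigma$ exchanging them, one has $\sigma\vpi_U\sigma^{-1}=\vpi_V$, so $\sigma$ and $\vpi_U$ generate a copy of $\Z\wr(\Z/2\Z)\cong\Z^2\rtimes\langle\sigma\rangle$. The element $[\vpi_U,\sigma]=\vpi_U\vpi_V^{-1}$ lies in the derived subgroup as an explicit commutator and has infinite order, without appealing to any structural information about $\lb\vpi\rb/\lb\vpi\rb'$. Localizing via $\lb\vpi_W\rb=\lb\vpi\rb_{[W]}$ on a family of disjoint clopens $W_n$ gives the free abelian group of infinite rank.

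Your approach instead trades this explicit commutator for the structural result that $\lb\vpi\rb^0/\lb\vpi\rb'$ is of exponent~$2$ (the corollary of Proposition~\ref{02}): once you know $\mo(\vpi_A)=1$ (your appeal to $\iota_\mu=\mo$ and the tower decomposition is correct), squaring $\vpi_A\vpi_B^{-1}\in\lb\vpi\rb^0$ forces it into $\lb\vpi\rb'$. This is perfectly valid and there is no circularity, since Proposition~\ref{02} is established earlier and independently. The cost is that you rely on the Putnam lemma (through Theorem~\ref{tab}) to justify the $2$-torsion of the abelianized kernel, whereas the paper's argument is entirely elementary. On the other hand, your construction is more flexible: you can take \emph{any} pair of disjoint nonempty clopens $A,B$, while the paper needs the specific relation $V=\vpi(U)$ to make the wreath-product identification work. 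The disjoint-support step for infinite rank is essentially the same in both arguments.
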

\begin{proof}On choisit un clouvert non vide $U\subset X$ disjoint de $V=\vpi(U)$. Soit $\sigma$ l'\'el\'ement d'ordre deux qui \'echange $U$ et $V$ par action de $\vpi^{\pm 1}$ et agit par l'identit\'e ailleurs. Alors $\vpi_V=\sigma\vpi_U\sigma^{-1}$, et on voit donc que $\sigma$ et $\vpi_U$ engendrent un groupe isomorphe au groupe $\Z^2\rtimes\langle\sigma\rangle$ (qui est un produit en couronne $\Z\wr(\Z/2\Z)$). Ce groupe contient bien la diagonale de $\Z^2$ dans son sous-groupe d\'eriv\'e.

Si $W$ est un clouvert de $X$, notons $\lb\vpi\rb_{[W]}$ l'ensemble des \'el\'ements de $\lb\vpi\rb$ qui sont l'identit\'e hors de $W$. Soit $W$ un clouvert non vide et montrons que $\lb\vpi\rb_{[W]}$ n'a pas un d\'eriv\'e de torsion. Comme l'action est minimale, la fonction de retour $\vpi_W$ est bien d\'efinie, et on a $\lb \vpi_W\rb=\lb\vpi\rb_{[W]}$. Le cas pr\'ec\'edent s'applique donc.

Consid\'erant une famille infinie $(W_n)$ de clouverts non vides deux \`a deux disjoints, les sous-groupes $\lb\vpi\rb'_{[W_n]}\subset \lb\vpi\rb'$ ont des supports deux \`a deux disjoints, si bien que $\lb\vpi\rb'$ contient une copie du produit direct restreint $\bigoplus_n\lb\vpi\rb'_{[W_n]}$. Comme $\lb\vpi\rb'_{[W_n]}$ contient un sous-groupe cyclique infini, on en d\'eduit que $\lb\vpi\rb'$ contient un groupe ab\'elien libre de rang infini d\'enombrable.
\end{proof}

Notons que le groupe de l'allumeur de r\'everb\`eres $(\Z/2\Z)\wr\Z$ contient un semi-groupe libre \`a deux g\'en\'erateurs (cela se voit directement, et a \'et\'e largement g\'en\'eralis\'e, par exemple par Rosenblatt \cite{Ros} au cas des groupes de type fini r\'esolubles non virtuellement nilpotents). Le th\'eor\`eme suivant, d\^u \`a Matui \cite{Ma2}, g\'en\'eralise une construction de Dahmani, Fujiwara et Guirardel \cite{DFG} dans le cas des \'echanges d'intervalles.

Rappelons qu'un groupe est {\em \`a croissance exponentielle} s'il poss\`ede une partie finie $S$ telle qu'en notant $S^n$ l'ensemble des produits de $n$ \'elements de $S$, on ait 
\mbox{$\underline{\lim}\#(S^n)^{1/n}>1$.}

\begin{thm}[Matui]\label{expo}
Soient $X$ un compact totalement s\'epar\'e et $\vpi$ un auto\-hom\'eo\-mor\-phisme minimal. On suppose en outre que $(X,\vpi)$ n'est pas un odom\`etre (cf. d\'efinition \ref{dodo}).
Alors tout sous-groupe du groupe plein-topologique $\lb\vpi\rb$ contenant $\lb\vpi\rb'$ contient un sous-groupe isomorphe au groupe de l'allumeur de r\'everb\`eres. En particulier, il contient un semi-groupe libre \`a deux g\'en\'erateurs et a une croissance exponentielle.
\end{thm}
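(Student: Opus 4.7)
The final two assertions follow immediately from the first, as the paragraph preceding the statement already observes that the lamplighter $(\Z/2\Z)\wr\Z$ contains a free sub-semigroup on two generators (and hence is of exponential growth). Moreover, any subgroup $\Lambda$ of $\lb\vpi\rb$ containing $\lb\vpi\rb'$ inherits every subgroup of $\lb\vpi\rb'$, so it is enough to embed a lamplighter inside $\lb\vpi\rb'$ itself.

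My strategy is to exhibit an infinite-order element $t \in \lb\vpi\rb'$ and an involution $\sigma \in \lb\vpi\rb'$ such that (a) $\sigma$ commutes with $t^n\sigma t^{-n}$ for every $n \in \Z$, and (b) the family $(t^n\sigma t^{-n})_{n\in\Z}$ is linearly independent over $\Z/2\Z$, in the sense that no non-trivial finite product of these elements equals the identity. Property (a) makes $\langle t^n\sigma t^{-n} : n\in\Z\rangle$ an elementary abelian $2$-group, and (b) forces it to be the full direct sum $\bigoplus_{\Z}\Z/2\Z$; together with the $t$-conjugation shifting the indices, this identifies $\langle\sigma,t\rangle$ with $(\Z/2\Z)\wr\Z$ as required.

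The non-odometer hypothesis enters through Fact~\ref{carodo}: it supplies a clopen $\Omega\subset X$ whose $\vpi$-orbit in $\Clo(X)$ is infinite, equivalently a continuous equivariant surjection $\pi:X\to Y$ onto an infinite minimal sous-décalage $(Y,T)\subset A^\Z$. The role of $Y$ is to provide, via its cylinders, arbitrarily long combinatorial patterns in the return times of clopen towers over $X$. Concretely, for each $N\ge 1$ one extracts a clopen base $V\subset X$ whose Kakutani–Rokhlin partition has return-time function $k_{V,\vpi}$ spread over a window of length at least $N$; such a partition is ruled out for every odometer. Inside this "unbalanced" tower, $\sigma$ is taken to be an exchange of two clopen slots lying in specific levels whose $\pi$-image is a single cylinder of $Y$, while $t$ is built as the composition of a local level-shift inside the tower and a carefully chosen lift (through the relative full group of $\pi$ in the sense of the proof of Theorem~\ref{clef}) of an infinite-order element of $\lb Y\rb'$ whose existence follows from $(Y,T)$ being infinite and minimal. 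Commutation (a) is read off from the explicit supports, which are locally disjoint at bounded scale.

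The main obstacle is the independence property (b). Since $t\in\lb\vpi\rb'$ preserves every $\vpi$-invariant probability measure on $X$ (of which there is at least one by Kakutani, as recalled just before Lemma~\ref{decompo}), the supports of the $t^n\sigma t^{-n}$ cannot be pairwise disjoint for all $n$: their total measure would otherwise be infinite. Hence (a) must hold despite recurrence of the supports, and (b) cannot follow from a naive disjoint-support argument. The delicate point is to align $\sigma$'s support with the cylinder filtration of $Y$ so that the $t$-orbit of this support projects under $\pi$ to an infinite family of pairwise distinct cylinder patterns of $Y$; here the aperiodicity of the infinite minimal subshift $(Y,T)$ is used essentially, and guarantees that no non-trivial $\Z/2$-relation among the $t^n\sigma t^{-n}$ can survive after projecting by $\pi$. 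This projective witness for independence is the real content of the non-odometer hypothesis in the construction.
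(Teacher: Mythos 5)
Your plan correctly identifies the target (an involution $\sigma$ and an infinite-order $t$ whose conjugates $t^n\sigma t^{-n}$ generate an elementary abelian $2$-group permuted by $t$) and, to your credit, you put your finger on the central difficulty: since every element of $\lb\vpi\rb$ preserves a $\vpi$-invariant probability measure, the supports of the $t^n\sigma t^{-n}$ must overlap, so neither commutation nor independence can come from disjointness. But the proposal then stops exactly where the proof has to begin. ``Commutation is read off from the explicit supports, which are locally disjoint at bounded scale'' is in direct tension with your own observation that the supports recur, and the independence claim is reduced to an unproved assertion that no $\Z/2$-relation ``survives after projecting by $\pi$''. The lift of an infinite-order element of $\lb Y\rb'$ through the relative full group is not constructed, and nothing is said about why the resulting $t$ and $\sigma$ lie in the derived subgroup --- which is genuinely an issue, since the natural candidates fail: the first-return map $\vpi_U$ has $\mo(\vpi_U)=1$, hence is not in $\lb\vpi\rb'=\Ker(\mo)'$.

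The mechanism you are missing is the one Matui's proof (as presented in the paper) supplies. One takes a clopen $U$ with infinite orbit in $\Clo(X)$, disjoint from $\vpi(U)$, sets $\psi=\vpi_U$, shows $\psi$ is not an odometer on $U$ (else $\vpi^n(U)=U$), and picks a clopen $V\subset U$ with infinite $\psi$-orbit. For a finite $F\subset\Z$ one defines $A_F=\sum_{n\in F}\mathbf{1}_{\psi^nV}$ \emph{as a mod-$2$ sum of indicators}, and $\sigma_F$ as the swap of $A_F$ with $\vpi(A_F)$ by $\vpi^{\pm1}$. Because every $\sigma_F$ is such a swap between $U$ and $\vpi(U)$, one has $\sigma_F\sigma_{F'}=\sigma_{F\bigtriangleup F'}$: the map $F\mapsto\sigma_F$ is a \emph{homomorphism} from $(\Z/2\Z)^{(\Z)}$, so commutation is automatic despite overlapping supports, and injectivity reduces to the linear independence of the family $(\mathbf{1}_{\psi^nV})$ in $(\Z/2\Z)^U$ --- proved by an elementary translation argument using only that the $\psi^nV$ are pairwise distinct. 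The shift is implemented by $\Psi=\psi\cdot(\vpi\psi\vpi^{-1})$, which satisfies $\Psi\sigma_F\Psi^{-1}=\sigma_{F+1}$. Finally, since $\mo(\Psi)=2\neq0$, this lamplighter lives in $\lb\vpi\rb$ but not in $\lb\vpi\rb'$; the paper gets around this by doubling the construction on $\vpi^2(U)\cup\vpi^3(U)$ and conjugating the two copies by an involution $s$, so that $(\Psi,\Psi^{-1})$ and $(\sigma_{\{0\}},\sigma_{\{0\}})$ generate a lamplighter inside the derived subgroup. None of these three points (the homomorphism $F\mapsto\sigma_F$, the independence of the indicators, the doubling trick) is present in your proposal, and without them the argument does not close.
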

\begin{proof}
Comme $X$ n'est pas un odom\`etre, commen\c cons par observer que, par compacit\'e, il existe $x\in X$ poss\'edant des voisinages clouverts arbitrairement petits \`a orbite infinie dans $\Clo(X)$. 

Il suffit bien entendu de d\'emontrer le th\'eor\`eme dans le cas de $\lb\vpi\rb'$, mais commen\c cons par $\lb\vpi\rb$ pour \'eviter, dans un premier temps, des complications techniques non essentielles. Comme $x$ n'est pas fixe, on peut se donner un voisinage clouvert $U$ de $x$ disjoint de $\vpi(U)$, et dont l'orbite dans $\Clo(X)$ par $\vpi$ est infini. Soit $\psi=\vpi_U$ la fonction de premier retour. On remarque que $\vpi_U$ n'est pas un odom\`etre: sinon, il existerait un entier $m\ge 1$, une partition $(U_i)_{i\in\Z/m\Z}$ de $U$ en clouverts et des $n_i\in\Z$ tels que $\psi(U_i)=U_{i+1}$ pour tout $i$ et $\psi=\vpi^{n_i}$ sur $U_i$. Si $n=\sum n_i$, on en d\'eduit que $\psi^m=\vpi^n$ sur $U$, donc $\vpi^n(U)=U$ contredisant le choix de $U$. Il existe donc un clouvert $V$ inclus dans $U$, dont la $\psi$-orbite est infinie.

V\'erifions que dans le $\Z/2\Z$-espace vectoriel $(\Z/2\Z)^U$, la famille des $\mathbf{1}_{\psi^nV}$ est libre: sinon on a une relation du type $\sum_{k=a}^b\eps_i\mathbf{1}_{\psi^nV}=0$ avec $a\le b$, $\eps_i\in\{0,1\}$ et $\eps_a=\eps_b=1$. Elle montre, en translatant, que pour tout $k\ge b$, l'\'el\'ement $\mathbf{1}_{\psi^kV}$ est dans le sous-groupe additif engendr\'e par les $\mathbf{1}_{\psi^\ell V}$ pour $\ell\in\{a,\dots k-1\}$; par r\'ecurrence il en d\'ecoule qu'il appartient en fait au sous-groupe additif engendr\'e par les $\mathbf{1}_{\psi^\ell V}$ pour $\ell\in\{a,\dots b-1\}$, qui est un sous-groupe fini ind\'ependant de $k$. 
Ceci contredit que les $(\mathbf{1}_{\psi^kV})_{k\ge b}$ sont deux \`a deux 
distincts; la famille $(\mathbf{1}_{\psi^nV})_{n\in\Z}$ est donc libre.

Si $F\in (\Z/2\Z)^{(\Z)}$ est une partie finie de $\Z$, on d\'efinit $A_F=\sum_{n\in F}\mathbf{1}_{\psi^nU}\in(\Z/2\Z)^U$, et $\sigma_F\in\lb\vpi\rb$ comme l'involution qui \'echange $A_F$ et $\vpi(A_{F})$ par action de $\vpi^{\pm 1}$, et agit par l'identit\'e ailleurs. On a $\sigma_F\in\lb\vpi\rb$ et l'application $F\mapsto\sigma_F$ est un morphisme injectif, par ce qui pr\'ec\`ede.
 Si on d\'efinit $\Psi$ comme le produit (commutatif) de $\psi$ et $\vpi\circ\psi\circ\vpi^{-1}$, alors on v\'erifie que $\Psi\circ\sigma_F\circ\Psi^{-1}=\sigma_{T(F)}$, o\`u $T:n\mapsto n+1$ est la translation de $\Z$. Ainsi $\Psi$ et $\sigma_{\{0\}}$ engendrent un groupe $H$ isomorphe au groupe de l'allumeur de r\'everb\`eres. 

Pour obtenir un allumeur \`a l'int\'erieur de $\lb\vpi\rb'$, on suppose en outre que les $\vpi^iU$ sont disjoints pour $0\le i\le 3$ (on peut le supposer car l'orbite de $x$ a au moins 4 \'el\'ements). Consid\'erons le groupe $H$ pr\'ec\'edemment construit, et l'\'el\'ement d'ordre deux $s$ \'echangeant $U\cup \vpi(U)$ et $\vpi^2(U)\cup \vpi^3(U)$ par action de $\vpi^{\pm 2}$ et agissant par l'identit\'e ailleurs; on voit que le groupe engendr\'e par $s$ et $H$ est isomorphe au produit semidirect $(H\times H)\rtimes \langle s\rangle$, o\`u $s$ agit par permutation des facteurs. Il est facile de voir que ce groupe contient, dans son d\'eriv\'e, un groupe isomorphe \`a l'allumeur, par exemple le sous-groupe de $H\times H$ engendr\'e par $(\Psi,\Psi^{-1})$ et $(\sigma_{\{0\}},\sigma_{\{0\}})$.
\end{proof}

\begin{rema}
R\'eciproquement, dans le cas d'un odom\`etre $X$ (pour une action d'un groupe ab\'elien $\Gamma$ quelconque), on v\'erifie sans peine que tout sous-groupe de type fini de $\lb\Gamma,X\rb$ est virtuellement ab\'elien, donc \`a croissance polynomiale; en particulier, $\lb\Gamma,X\rb$ ne contient pas de sous-semi-groupe libre \`a deux g\'en\'erateurs.
\end{rema}

\subsection{Autres r\'esultats}
Mentionnons quelques autres r\'esultats sur la structure de $\lb\vpi\rb$, quand $\vpi$ est un auto\-hom\'eo\-mor\-phisme minimal d'un espace compact totalement s\'epar\'e.

\subsubsection{Isomorphismes et automorphismes}\label{isoa}

Un r\'esultat de Giordano, Putnam et Skau \cite{GPS}, sous une version l\'eg\`erement am\'elior\'ee par Bezuglyi et Medynets \cite{BeM} (mais avec une approche diff\'erente) montre que si $\vpi_1,\vpi_2$ sont des auto\-hom\'eo\-mor\-phismes minimaux d'espaces de Cantor $X_i$ et, si pour $i=1,2$ on se donne un sous-groupe $\Lambda_i$  de $\lb\vpi_i\rb$ contenant $\lb\vpi_i\rb'$, alors tout isomorphisme $\Lambda_1\to\Lambda_2$ est induit par un hom\'eo\-mor\-phisme $h:X_1\to X_2$, au sens o\`u $\Lambda_2=h\Lambda_1h^{-1}$. En particulier, le groupe des automorphismes de $\Lambda_1$ s'identifie au normalisateur de $\Lambda_1$ dans $\Homeo(X_1)$. 

D'autre part, un th\'eor\`eme de la th\`ese de Boyle \cite{Bo} (disponible dans \cite{GPS95} ou, g\'en\'eralis\'e, dans \cite{BoTo}) montre que pour un auto\-hom\'eo\-mor\-phisme minimal $\vpi$ de l'espace de Cantor $X$, si $\psi\in\lb\vpi\rb$ a les m\^emes orbites que $\vpi$ alors le groupe cyclique $\langle\psi\rangle$ est conjugu\'e \`a $\langle\vpi\rangle$ dans $\lb\vpi\rb$. Giordano, Putnam et Skau combinent ces r\'esultats pour montrer que, sous les hypoth\`eses pr\'ec\'edentes, $\langle\vpi_1\rangle$ et $\langle\vpi_2\rangle$ sont conjugu\'es dans $\Homeo(X)$, et d\'eduisent \'egalement que le groupe des automorphismes de tout sous-groupe $\Lambda$ de $\lb\vpi\rb$ contenant $\lb\vpi\rb'$ est engendr\'e par $\lb\vpi\rb$ et par le commutant de $\vpi$ dans $\Homeo(X)$. En particulier, le groupe des automorphismes ext\'erieurs de $\lb\vpi\rb^0$ (voir \S\ref{abelianise}) s'identifie au commutant $C_\vpi$ de $\vpi$ dans $\Homeo(X)$, et celui de $\lb\vpi\rb$, au quotient de $C_\vpi$ par son sous-groupe cyclique central~$\langle\vpi\rangle$. 

\subsubsection{Distorsion}

Dans $\lb\vpi\rb$, pour $\vpi$ auto\-hom\'eo\-mor\-phisme minimal d'un espace compact totalement s\'epar\'e infini $X$, les sous-groupes cycliques sont non distordus; autrement dit pour toute partie finie $S$ et tout sous-groupe cyclique infini $\langle z\rangle$ inclus dans le sous-groupe engendr\'e par $S$, la longueur de $z$ par rapport \`a $S$ cro\^\i t lin\'eairement. En effet, en fixant $x\in X$ et en consid\'erant l'action $j_x$ de $\lb\vpi\rb$ sur $\Z$ par permutations \`a d\'eplacement born\'e obtenue en identifiant l'orbite de $x$ \`a $\Z$ (voir la preuve du lemme \ref{lputnam}), on voit que si la longueur de $(z^n)$ cro\^\i t sous-lin\'eairement, alors ses orbites dans $\Z$ sont finies; en particulier, en prenant la r\'eunion des orbites contenues dans $\N$, on obtient une partie finie $A$ telle que $A\bigtriangleup\N$ est invariant par $z$. Or le lemme de Putnam (lemme \ref{lputnam}) implique que le stabilisateur de $\N+k$ est localement fini pour tout $k\in\Z$, et l'argument du d\'ebut de la preuve du crit\`ere \ref{cr_jm} implique alors que le stabilisateur de $A\bigtriangleup\N$ est localement fini, si bien que $z$ est de torsion.

Ce r\'esultat de non-distorsion des groupes cycliques est \'egalement vrai dans le cas du groupe des \'echanges d'intervalles (C.~Novak \cite{Nov}).

\subsection{Quelques exemples non minimaux}\label{enm}
On va indiquer quelques exemples de sous-d\'ecalages non minimaux sur $\Z$, pour qui la structure du groupe plein-topologique est tr\`es diff\'erente du cas d'un sous-d\'ecalage minimal.

On commence par une construction due \`a van Douwen \cite{VD}. 
Consid\'erons un alphabet $A=\{a_1,a_2,\dots,a_q\}$ \`a $q$ lettres. Consid\'erons le d\'ecalage propre $\vpi$ associ\'e \`a l'ensemble des mots bi-infinis propres \[A^\Z_{\textnormal{pro}}=\{w\in A^\Z\mid \forall n\in\Z,\; w(n)\neq w(n+1)\}.\]

\begin{propp}\label{psgl}
Si $q\ge 3$, alors le groupe plein-topologique $\lb A^\Z_{\textnormal{pro}}\rb$ contient un sous-groupe libre non ab\'elien (agissant fid\`element sur au moins une orbite).
\end{propp}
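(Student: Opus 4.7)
The plan is to produce two elements $g,h\in\lb A^\Z_{\textnormal{pro}}\rb$ generating a free subgroup of rank 2, and to establish freeness by a ping-pong argument carried out along the $\varphi$-orbit of a suitable aperiodic word. First I would pick a $\varphi$-aperiodic $w_0\in A^\Z_{\textnormal{pro}}$, which exists for $q\ge 3$ since the proper shift is then an uncountable subshift of finite type with uncountably many orbits. Identifying the orbit $O(w_0)$ with $\Z$ via $n\mapsto\varphi^n(w_0)$, any $f\in\lb\varphi\rb$ induces a bounded-displacement permutation $\tau_f$ of $\Z$ in the sense of the introduction to \S\ref{abelianise} and of the proof of Lemme~\ref{lputnam}. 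Showing that $g,h$ generate $F_2$ thus reduces to showing that $\tau_g,\tau_h$ do, and faithfulness on $O(w_0)$ is automatic.

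Next I would construct $g$ and $h$ by local rules, each specified by a finite clopen partition of $X$ into cylinders $\mathrm{Cyl}(I,f)$ over a fixed finite window $I$ of proper words, with a power of $\varphi$ prescribed on each cylinder. The assumption $q\ge 3$ is essential here: in the proper shift every position has $q-1\ge 2$ admissible successors and predecessors, and this branching is precisely what is needed to encode an action on the Cayley tree of $F_2$. Following the combinatorial construction of van Douwen \cite{VD}, one chooses the two rules so that, when read off the successive letters of any proper word, the induced bounded-displacement permutations $\tau_g,\tau_h$ of $\Z$ realize an explicit free pair of such permutations known to generate $F_2$ (this is the essential content of van Douwen's work on free subgroups of bounded-displacement permutation groups of $\Z$).

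The main obstacle is to verify two things simultaneously. First, the local rules must define global homeomorphisms of $A^\Z_{\textnormal{pro}}$ (not merely partial maps): one must check that the images of the chosen cylinders under their prescribed $\varphi$-powers again form a clopen partition of $X$. This is a finite combinatorial check facilitated by the fact that the only constraint defining $A^\Z_{\textnormal{pro}}$ (non-repetition of adjacent letters) is itself local and preserved by local rules. Second, on the orbit $O(w_0)$ one must check the ping-pong configuration for $\tau_g,\tau_h$: partitioning $\Z$ into regions $A_g^\pm,A_h^\pm$ and verifying that any nontrivial reduced word in $g^{\pm 1},h^{\pm 1}$ sends a chosen basepoint into one of these regions. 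This is exactly van Douwen's analysis translated along our identification of $O(w_0)$ with $\Z$.

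Once both points are secured, the standard ping-pong lemma delivers an embedding $F_2\hookrightarrow\lb A^\Z_{\textnormal{pro}}\rb$, and nontriviality of every reduced word already at the level of its action on $O(w_0)$ yields the parenthetical faithfulness statement. I expect the delicate step to be (i), since van Douwen's original rules must be adjusted so that the global map is bijective on all of $A^\Z_{\textnormal{pro}}$; the prescription of each $\varphi$-power on each window-cylinder should be chosen so that the induced map permutes the finite algebra of cylinders of window $I\cup(I+n_{\max})\cup(I-n_{\max})$, which reduces the verification to the combinatorics of a finite bipartite graph of cylinders.
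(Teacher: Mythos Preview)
Your proposal is a plan rather than a proof: the two elements $g,h$ are never actually defined. You defer their construction to ``the combinatorial construction of van Douwen'' and then to ``van Douwen's analysis'', but no explicit local rules are written down, and you yourself flag step~(i) --- global bijectivity on $A^\Z_{\textnormal{pro}}$ --- as delicate and unresolved. A referee cannot check a ping-pong argument on regions $A_g^\pm,A_h^\pm\subset\Z$ when neither the regions nor the maps have been specified. As written, this is a genuine gap.

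The paper's argument (which \emph{is} van Douwen's construction) is both more explicit and considerably simpler than what you outline. Rather than aiming directly for an $F_2$ pair, one defines, for each letter $a_i$, the involution $\sigma_i\in\lb A^\Z_{\textnormal{pro}}\rb$ given by $\sigma_i(w)=\vpi^{\kappa_i(w)}w$ with $\kappa_i(w)=1$ if $w(0)=a_i$, $\kappa_i(w)=-1$ if $w(1)=a_i$, and $\kappa_i(w)=0$ otherwise (properness makes these cases exclusive). Bijectivity is immediate since each $\sigma_i$ is visibly an involution, so your feared ``delicate step~(i)'' evaporates. Freeness of the product is checked not by a ping-pong partition of $\Z$ but by a one-line computation: given a reduced word $m=\sigma_{k_n}\cdots\sigma_{k_1}$ with $k_j\neq k_{j+1}$, one chooses any $w\in A^\Z_{\textnormal{pro}}$ with $w(j)=a_{k_j}$ for $1\le j\le n$, $w(0)\notin\{a_{k_1},a_{k_n}\}$, $w(n+1)\neq a_{k_n}$, and reads off $m^{-1}w=\vpi^{-n}w\neq w$. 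Thus the $\sigma_i$ generate the free product $(\Z/2\Z)^{\ast q}$, which for $q\ge 3$ contains a free subgroup of rank~$2$ (of index~$2$ when $q=3$). Faithfulness on a single orbit follows since $A^\Z_{\textnormal{pro}}$ has a dense orbit. The moral: using $q$ involutions instead of two infinite-order elements trades a subtle ping-pong verification for an essentially trivial one.
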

\begin{proof}
La preuve consiste \`a plonger dans $\lb A^\Z_{\textnormal{pro}}\rb$ le produit libre de $q$ groupes cycliques d'ordre 2, sachant que le produit libre de 3 groupes cycliques d'ordre 2 contient un sous-groupe d'indice 2 libre \`a 2 g\'en\'erateurs. Pour toute lettre $a_i$, on d\'efinit une involution $\sigma_i\in\lb A^\Z_{\textnormal{pro}}\rb$ comme ceci: si $w\in A^\Z_{\textnormal{pro}}$, on pose 
$$\sigma_i(w)=\vpi^{\kappa_i(w)}w,\textnormal{ o\`u}\quad\kappa_i(w) =
\left\{ \begin{array}{cl} 1         &        \mbox{si } w(0)=a_i; \\
  -1                 &               \mbox{si } w(1)=a_i; \\
   0                 &             \mbox{dans les autres cas},
\end{array}\right.$$
en remarquant que les conditions $w(0)=a_i$ et $w(1)=a_i$ sont exclusives. 
On v\'erifie bien que $\sigma_i$ est une involution \'echangeant les clouverts disjoints $\{\kappa_i=1\}$ et $\{\kappa_i=-1\}$ et appartient \`a $\lb A^\Z_{\textnormal{pro}}\rb$. V\'erifions que les $\sigma_i$ engendrent un produit libre de groupes cycliques d'ordre 2. Il faut montrer que pour tout mot non trivial $m=\prod_{j=1}^n\sigma_{{k_j}}$ sans deux lettres cons\'ecutives \'egales ($n\ge 1$ et $k_j\neq k_{j+1}$ pour tout $1\le j<n$), on a $m\neq 1$. En effet, on choisit $w$ tel que $w(j)=k_j$ pour tout $j=1,\dots,n$, tel que $w(0)\notin\{k_1,k_n\}$ et $w(n+1)\neq k_n$. Alors on v\'erifie que $m^{-1}\cdot w=\vpi^{-n}w\neq w$.

Remarquons qu'il existe dans $A^\Z_{\textnormal{pro}}$ une orbite dense (choisir un mot bi-infini propre contenant tout mot fini propre comme sous-mot) et $\lb A^\Z_{\textnormal{pro}}\rb$ agit donc fid\`element sur cette orbite.
\end{proof}

Cette construction est apparent\'ee \`a la premi\`ere preuve connue, due \`a Schreier \cite[p.~170]{Sch}, de la r\'esiduelle finitude des groupes libres.

\begin{cor}[Schreier]
Tout groupe libre est r\'esiduellement fini.
\end{cor}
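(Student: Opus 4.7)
Le plan est de combiner la construction de la proposition \ref{psgl} avec le r\'esultat d'approximation finie pour les sous-d\'ecalages irr\'eductibles (proposition \ref{sdlef}). Je me ram\`enerai d'abord \`a la seule r\'esiduelle finitude de $F_2$, le groupe libre \`a deux g\'en\'erateurs: si $F$ est un groupe libre de base $S$ et $g\neq 1$ dans $F$, alors $g$ s'\'ecrit comme mot r\'eduit en un nombre fini de g\'en\'erateurs $S_0\subset S$ et appartient donc au sous-groupe libre de rang fini $F'=\langle S_0\rangle$, qui se plonge dans $F_2$ (plongement classique). Tout morphisme $\pi:F'\to Q$ vers un groupe fini s\'eparant $g$ de l'identit\'e s'\'etend \`a $F$ en posant $\tilde{\pi}(s)=e_Q$ pour $s\in S\smallsetminus S_0$, fournissant le morphisme requis sur $F$ tout entier.

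Pour la r\'esiduelle finitude de $F_2$, j'invoquerai la proposition \ref{psgl} avec $q=3$: le groupe plein-topologique $\lb A^\Z_{\textnormal{pro}}\rb$ contient une copie du produit libre $(\Z/2\Z)^{*3}$, lequel contient $F_2$ comme sous-groupe d'indice deux. Il suffit alors de montrer que $\lb A^\Z_{\textnormal{pro}}\rb$ est finiment approximable. Pour appliquer la proposition \ref{sdlef}, il faut v\'erifier que $A^\Z_{\textnormal{pro}}$ (sur un alphabet \`a trois lettres) est un sous-d\'ecalage irr\'eductible sur $\Z$, ce qui est imm\'ediat: \'etant donn\'es deux motifs propres $u$ et $v$, on choisit une lettre $a$ distincte de la derni\`ere lettre de $u$ et de la premi\`ere lettre de $v$ (possible car il y a trois lettres), et alors $uav$ est un motif propre ayant $u$ pour segment initial et $v$ pour segment terminal.

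La proposition \ref{sdlef} entra\^\i ne alors que $\lb A^\Z_{\textnormal{pro}}\rb$ est finiment approximable; cette propri\'et\'e \'etant h\'erit\'ee par sous-groupes (observation directe \`a partir de la d\'efinition \ref{lef}), il en est de m\^eme pour $F_2$. Enfin, puisque $F_2$ est de pr\'esentation finie, il est r\'esiduellement fini, en vertu du fait (rappel\'e apr\`es la d\'efinition \ref{lef}) qu'un groupe de pr\'esentation finie finiment approximable est r\'esiduellement fini. Ce dernier fait, l\'eg\`erement non trivial, constitue le seul obstacle notable de la d\'emonstration: il repose sur un principe de compacit\'e assurant que dans une approximation finie suffisamment fine, toutes les relations d\'efinissantes de $F_2$ sont satisfaites, de sorte que l'approximation fournit en r\'ealit\'e un v\'eritable quotient fini; tout le reste d\'ecoule m\'ecaniquement des r\'esultats \'etablis plus haut.
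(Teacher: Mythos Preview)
Your argument is correct, but it takes a detour that the paper avoids. The paper's proof is shorter: after reducing to free groups of finite rank and embedding them in $\lb A^\Z_{\textnormal{pro}}\rb$ via Proposition~\ref{psgl}, it observes directly that the periodic sequences are dense in $A^\Z_{\textnormal{pro}}$, so that $\lb A^\Z_{\textnormal{pro}}\rb$ acts with a dense set of finite orbits and is therefore \emph{r\'esiduellement fini} outright. Residual finiteness then passes to subgroups, and one is done.

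Your route goes through Proposition~\ref{sdlef} (irr\'eductibilit\'e $\Rightarrow$ LEF) and then invokes the implication ``LEF + pr\'esentation finie $\Rightarrow$ r\'esiduellement fini'' for $F_2$. This works, but it is heavier than necessary in two respects. First, $A^\Z_{\textnormal{pro}}$ is already a sous-d\'ecalage \emph{de type fini} irr\'eductible (the forbidden patterns are the two-letter words $a_ia_i$), so by the remark preceding Proposition~\ref{sdlef} it has dense periodic points; one can conclude residual finiteness of $\lb A^\Z_{\textnormal{pro}}\rb$ directly, without the limiting argument inside Proposition~\ref{sdlef}. Second, you then need the extra (and, as you note, slightly non-trivial) fact that a finitely presented LEF group is residually finite, whereas the paper never leaves the world of residual finiteness at all. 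Your reduction to $F_2$ via the classical embedding $F_n\hookrightarrow F_2$ is also an extra step: the paper simply takes $q$ large enough in Proposition~\ref{psgl} to accommodate any given finite-rank free group.
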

\begin{proof}
Clairement on peut se ramener au cas d'un groupe libre de type fini; un tel groupe peut se plonger dans $\lb A^\Z_{\textnormal{pro}}\rb$ par la proposition.
Il  suffit alors de remarquer que comme la r\'eunion des orbites finies, c'est-\`a-dire l'ensemble des suites p\'eriodiques, est dense dans $A^\Z_{\textnormal{pro}}$, le groupe plein-topologique $\lb A^\Z_{\textnormal{pro}}\rb$ est r\'esiduellement fini.
\end{proof}

La construction de la proposition a \'et\'e adapt\'ee par Elek et Monod \cite{EM} pour exhiber un sous-d\'ecalage minimal sur $\Z^2$ dont le groupe plein-topologique contient un sous-groupe libre. 

Passons \`a des exemples plus \'el\'ementaires, toujours avec $\Gamma=\Z$. On consid\`ere l'alphabet $\{a,b\}$ et $Y\subset A^\Z$ l'ensemble des suites ne contenant pas le motif $ba$. Ainsi $Y$ est constitu\'e des deux suites constantes et des suites commen\c cant par une infinit\'e de $a$ et terminant par une infinit\'e de $b$. (C'est un sous-d\'ecalage de type fini mais pas irr\'eductible.) On peut identifier $Y$ \`a la compactification $\Z\cup\{\pm\infty\}$ o\`u $n\in\Z$ correspond \`a l'\'el\'ement pour lequel la transition de $a$ \`a $b$ est en position $(n,n+1)$, et $+\infty$ et $-\infty$ correspondent \`a la suite constante $a$ et $b$ respectivement. 

\begin{propp}\label{h2}
Sous cette identification, $\lb Y\rb$ est le groupe des permutations de $\Z$ qui co\"\i ncident avec une translation hors d'un ensemble fini.
\end{propp}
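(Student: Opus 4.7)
Le plan est de d\'ecrire explicitement la topologie de $Y$ et l'action de $\vpi$ sous l'identification $Y=\Z\cup\{\pm\infty\}$, puis d'analyser ce que signifie concr\`etement la continuit\'e et l'\'etagement de la fonction $\kappa$ associ\'ee \`a $\psi\in\lb Y\rb$. D'abord, je remarquerais que $Y$ est la compactification \`a deux points de $\Z$: les points $n\in\Z$ sont isol\'es (car le clouvert $\{w:w(n)=a,\;w(n+1)=b\}$ ne contient que l'\'el\'ement $n$), tandis que les voisinages clouverts de $+\infty$ (resp.\ $-\infty$) sont exactement les ensembles contenant une queue $\{n\ge N\}\cup\{+\infty\}$ (resp.\ $\{n\le -N\}\cup\{-\infty\}$). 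De plus, on v\'erifie imm\'ediatement que le d\'ecalage $\vpi$ agit par $n\mapsto n+1$ sur $\Z$ et fixe $\pm\infty$ (qui sont les deux seuls points fixes).

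Soit alors $\psi\in\lb Y\rb$, et $\kappa:Y\to\Z$ la fonction continue \'etag\'ee associ\'ee, v\'erifiant $\psi(y)=\vpi^{\kappa(y)}(y)$. Comme $\kappa$ est localement constante et que les seuls points non isol\'es de $Y$ sont $\pm\infty$, il existe $N\ge 0$ et $k^+,k^-\in\Z$ tels que $\kappa(n)=k^+$ pour tout $n\ge N$ (et $\kappa(+\infty)=k^+$) et $\kappa(n)=k^-$ pour tout $n\le -N$ (et $\kappa(-\infty)=k^-$). En particulier $\psi$ fixe $\pm\infty$ et induit une permutation $\sigma$ de $\Z$ v\'erifiant $\sigma(n)=n+k^+$ pour $n\ge N$ et $\sigma(n)=n+k^-$ pour $n\le -N$.

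Le point cl\'e, et probablement l'\'etape la plus d\'elicate mais \'el\'ementaire, est de montrer que la condition que $\sigma$ soit une bijection force $k^+=k^-$. Pour cela, un simple argument de comptage: pour $M\ge N$ grand, l'image par $\sigma$ de $\{n\ge M\}$ est $\{m\ge M+k^+\}$ et l'image de $\{n\le -M\}$ est $\{m\le -M+k^-\}$; ces deux images sont disjointes pour $M$ assez grand, et leur compl\'ementaire dans $\Z$ a exactement $2M-1+(k^+-k^-)$ \'el\'ements, alors que la partie centrale $\{-M+1,\dots,M-1\}$ de $\Z$, qui doit lui \^etre envoy\'ee bijectivement par $\sigma$, a exactement $2M-1$ \'el\'ements. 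Donc $k^+=k^-$, et $\sigma$ co\"\i ncide avec la translation $n\mapsto n+k^+$ hors de $\{-N+1,\dots,N-1\}$.

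Pour l'inclusion r\'eciproque, je partirais d'une permutation $\sigma$ de $\Z$ co\"\i ncidant avec une translation $n\mapsto n+k$ hors d'un ensemble fini, je l'\'etendrais par $\sigma(\pm\infty)=\pm\infty$, et je d\'efinirais $\kappa(n)=\sigma(n)-n$ sur $\Z$ et $\kappa(\pm\infty)=k$. Cette fonction est \`a support fini hors des valeurs aux bouts, donc locale\-ment constante et \'etag\'ee, et l'extension est un hom\'eo\-mor\-phisme de $Y$ car $\sigma(n)\to\pm\infty$ quand $n\to\pm\infty$ (puisque $\sigma$ est une translation au voisinage des bouts); elle appartient donc bien \`a $\lb Y\rb$.
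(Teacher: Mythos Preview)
Your proof is correct and follows essentially the same approach as the paper: analyse the locally constant function $\kappa$ near the two non-isolated points $\pm\infty$ to obtain two translation constants $k^\pm$, then use bijectivity to force $k^+=k^-$. The paper phrases the first step in terms of the subshift structure (the fonction \'etag\'ee factors through $w|_{\{-k,\dots,k\}}$, then one evaluates on the constant sequences), which is exactly your topological observation rewritten in cylinder language; the counting argument you spell out is left implicit in the paper. For the reverse inclusion the paper takes a slightly different but equally short route: rather than building $\kappa$ directly for an arbitrary $\sigma\in H_2$, it checks that the two generators of $H_2$ (the translation $n\mapsto n+1$ and the transposition $(0,1)$) lie in $\lb Y\rb$. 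Both arguments are immediate; yours is perhaps more self-contained, while the paper's avoids having to verify continuity of the extended map.
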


Ce groupe est bien connu. Il a un morphisme vers $\Z$, donn\'e par la longueur de translation \`a l'infini, dont le noyau est constitu\'e des permutations \`a support fini de $\Z$. Il est finiment approximable (voir la d\'efinition \ref{lef}) \cite[p.~127]{Neu}, mais pas r\'esiduellement fini puisqu'il contient un groupe simple infini. Il peut s'interpr\'eter comme le groupe des permutations du graphe de Cayley standard de $\Z$ qui pr\'eserve presque toute ar\^ete et fixe les bouts.

\begin{proof}[Preuve de la proposition \ref{h2}]
Soit $H_2$ le groupe d\'efini dans la proposition. V\'eri\-fions d'abord que $H_2\subset\lb Y\rb$. Comme $H_2$ est engendr\'e par la translation $\tau(n)=n+1$ et par la transposition $(0,1)$, il suffit de v\'erifier que cette derni\`ere est dans $\lb Y\rb$: c'est bien le cas car elle est d\'efinie par la fonction $\kappa$ d\'efinie par $\kappa(w)=1$ si $(w(0),w(1),w(2))=(a,b,b)$, par $\kappa(w)=-1$ si $(w(0),w(1),w(2))=(a,a,b)$ et $\kappa(w)=0$ sinon.

Montrons maintenant que tout $\sigma\in\lb Y\rb$ appartient \`a $H_2$. Par d\'efinition de $\lb Y\rb$, il existe $k$ tel que la fonction \'etag\'ee $\kappa$ d\'efinissant $\sigma$ est de la forme $\kappa(w)=\kappa'(w|_{\{-k,\dots,k\}})$. Soient $f_a,f_b$ les fonctions constantes \'egales \`a $a$ et $b$ sur $\{-k,\dots,k\}$. Soit $N_-=\kappa'(f_b)$ et $N_+=\kappa'(f_a)$. Alors on voit que $\sigma(n)=n+N_+$ pour tout $n\ge k$ et $\sigma(n)=n+N_-$ pour tout $n\le -k-1$. Cet argument montre en fait que tout $\sigma\in\lb Y\rb\os$ est une translation de chaque c\^ot\'e \`a partir d'un certain rang. Le fait que $\sigma$ est une permutation implique de plus que $N_+=N_-$, donc $\lb Y\rb\subset H_2$.
\end{proof}

L'exemple suivant est similaire. On consid\`ere un alphabet $A=\{a,b,c\}$ et on consid\`ere le sous-d\'ecalage $Y'$ d\'efini par l'interdiction de tous les motifs de deux lettres, sauf $aa$, $ab$, $bc$, $cb$. Ainsi $Y'$ est constitu\'e de l'ensemble $Z$ des suites translat\'ees de $\dots aaabcbcbc\dots$, et de trois points, la suite constante $a$ (not\'ee $+\infty$, et les deux suites $\dots bcbcbc\dots$, 
not\'ees $-\infty_{\textnormal{pair}}$ 
(pour celle avec $w(0)=c$) et $-\infty_{\textnormal{impair}}$ (pour celle avec $w(0)=b$). Comme $Z$ est dense, pour d\'ecrire $\lb Y'\rb$, il suffit de comprendre son action sur $Z$. On identifie $Z$ \`a $\Z$ en identifiant $n$ \`a l'unique suite $w\in Z$ telle $(w(n),w(n+1))=(a,b)$. Remarquons, pour la consistance des notations, que l'\'el\'ement correspondant \`a $2n$ (resp.\ $2n+1$) tend vers
$-\infty_{\textnormal{pair}}$ (resp.\ $-\infty_{\textnormal{impair}}$) quand $n$ tend vers $-\infty$.

\begin{propp}\label{h3}
Sous cette identification, $\lb Y'\rb$ est le groupe des permutations de $\Z$ qui, en restriction \`a chacune des parties $\N$, $-2\N$ et $-2\N-1$, co\"\i ncident avec une translation hors d'un ensemble fini. 
\end{propp}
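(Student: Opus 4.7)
Je suivrais de près l'approche de la proposition \ref{h2}. La première étape consiste à élucider la topologie de $Y'$: chaque $w_n \in Z$ est isolé dans $Y'$, car le cylindre $\{w\in Y' : w(n) = a,\, w(n+1) = b\}$ ne contient que $w_n$ (les seuls motifs autorisés forcent $w(m) = a$ pour tout $m \le n$ et $(w(n+1), w(n+2), \dots) = (b,c,b,c,\dots)$). Les points $+\infty$, $-\infty_{\textnormal{pair}}$ et $-\infty_{\textnormal{impair}}$ sont donc les seuls points limites de $Y'$, et tout auto\-hom\'eo\-mor\-phisme de $Y'$ les permute. En particulier, $\vpi$ fixe $+\infty$ et échange $-\infty_{\textnormal{pair}}$ et $-\infty_{\textnormal{impair}}$. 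Notons $H_3$ le groupe décrit dans l'énoncé; il s'agit d'établir l'égalité $\lb Y'\rb = H_3$.

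Pour l'inclusion $\lb Y'\rb \subseteq H_3$, je considérerais $\psi \in \lb Y'\rb$ et sa fonction étagée associée $\kappa : Y' \to \Z$. Comme $\kappa$ est continue à valeurs dans $\Z$ discret, elle est localement constante, donc constante dans un voisinage de chacun des trois points limites, où elle prend des valeurs respectives $k_+$, $k_p$, $k_i$. Traduit via l'identification $n \leftrightarrow w_n$, ceci donne exactement $\psi(n) = n + k_+$ pour $n \in \N$ suffisamment grand, $\psi(n) = n + k_p$ pour $n \in -2\N$ suffisamment grand en valeur absolue, et $\psi(n) = n + k_i$ pour $n \in -2\N-1$ suffisamment grand en valeur absolue; donc $\psi|_Z \in H_3$.

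Réciproquement, étant donné $\sigma \in H_3$ avec paramètres asymptotiques $(k_+, k_p, k_i)$, je définirais $\kappa : Y' \to \Z$ en posant $\kappa(w_n) = \sigma(n) - n$ sur $Z$, et $\kappa(+\infty) = k_+$, $\kappa(-\infty_{\textnormal{pair}}) = k_p$, $\kappa(-\infty_{\textnormal{impair}}) = k_i$. Par hypothèse, $\kappa$ est localement constante et ne prend qu'un nombre fini de valeurs; l'application $w \mapsto \vpi^{\kappa(w)}(w)$ est donc continue et étend $\sigma$ de $Z$ à $Y'$. Pour conclure qu'il s'agit d'un élément de $\lb Y'\rb$, il reste à vérifier qu'elle induit une bijection sur les trois points limites, ce qui, compte tenu de l'action de $\vpi$ sur ces points, équivaut à la condition $k_p \equiv k_i \pmod 2$.

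Le point délicat sera alors de constater que cette condition de parité découle automatiquement du fait que $\sigma$ est une permutation de $\Z$. Pour le voir, j'effectuerais un décompte élémentaire: pour $M, N \gg 0$, la bijectivité de $\sigma$ se traduit, en comparant les cardinaux des images des trois régions asymptotiques $\{n \ge N\}$, $-2\N \cap (-\infty, -M]$ et $(-2\N-1)\cap (-\infty, -M]$ à leurs complémentaires, par l'égalité $2 k_+ = k_p + k_i$. Celle-ci impose en particulier que $k_p + k_i$ est pair, d'où la parité commune de $k_p$ et $k_i$, ce qui achève la preuve.
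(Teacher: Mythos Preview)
Your proof is correct and follows exactly the template the paper has in mind: the paper does not give a proof but states ``La preuve, similaire \`a celle de la proposition \ref{h2}, est laiss\'ee au lecteur,'' and your argument is precisely the adaptation of that proof to the three-ended situation. The identification of the topology, the two inclusions via the \'etag\'ee function $\kappa$, and the reduction of bijectivity to a parity constraint on $(k_p,k_i)$ are all exactly as intended.

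One small remark on the final step: your counting argument for $2k_+ = k_p + k_i$ is correct (it is the Houghton relation, reflecting that $\lb Y'\rb$ contains $H_3$ with index~2), but as written it is a bit elliptic since the three ``r\'egions asymptotiques'' are infinite. A slightly more direct route to the parity conclusion alone, avoiding the full relation, is: if $k_p \not\equiv k_i \pmod 2$, say $k_p$ even and $k_i$ odd, then for all sufficiently negative $n$ one has $\sigma(n) \equiv 0 \pmod 2$ regardless of the parity of $n$, so the odd integers far to the left are missed by $\sigma$, contradicting surjectivity. Either argument works; yours has the advantage of also exhibiting the homomorphism $\lb Y'\rb \to \Z^2$ implicit in the paper's subsequent discussion of $H_3$.
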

La preuve, similaire \`a celle de la proposition \ref{h2}, est laiss\'ee au lecteur.

Ce groupe est connu, du moins son sous-groupe d'indice 2 constitu\'e des permutations qui, modulo un ensemble fini, pr\'eservent $-2\N$ et $-2\N-1$: c'est le groupe de Houghton $H_3$ (introduit dans \cite{Hou}). En effet, si on munit $\Z$ d'une structure de graphe en joignant $n$ \`a $n+1$ pour tout $n\ge -1$ et $n$ \`a $n+2$ pour tout $n\le -2$, le graphe obtenu est constitu\'e de 3 branches infinies jointes en 0; $H_3$ est par d\'efinition le groupe des permutations de $\Z$ qui pr\'eservent la structure de graphe sauf en un nombre fini d'ar\^etes et fixent les bouts. Ici, $\lb Y'\rb$ est son sur-groupe d'indice 2, le stabilisateur du bout $+\infty$. On d\'eduit que $\lb Y'\rb$ est un groupe de pr\'esentation finie (puisque $H_3$ l'est, par un r\'esultat non publi\'e de Burns et Solitar, prouv\'e par Brown dans \cite{Bro}). Comme il contient le groupe simple infini des permutations \`a support fini altern\'ees, $\lb Y'\rb$ n'est donc pas finiment approximable (voir la d\'efinition \ref{lef}), en contraste avec $\lb Y\rb$.

En g\'en\'eral, pour $n\ge 2$, le groupe de Houghton $H_n$ est le groupe des permutations d'un graphe constitu\'e de $n$ branches infinies jointes en un point, qui pr\'eservent la structure de graphe sauf en un nombre fini d'ar\^etes, et fixent les bouts.
Une g\'en\'eralisation facile de l'exemple pr\'ec\'edent fournit un sous-d\'ecalage de type fini dont le groupe plein-topologique contient le groupe de Houghton $H_n$ comme sous-groupe d'indice fini.

\section{Moyennabilit\'e}\label{smoy}
\subsection{Le crit\`ere}\label{lecri}
On va commencer par une construction g\'en\'erale. \'etant donn\'e un groupe $\Gamma$, on d\'efinit le groupe des {\em translations par morceaux} $\TM(\Gamma)$ comme le groupe plein-topologique associ\'e \`a l'action \`a gauche de $\Gamma$ sur l'espace topologique discret $\Gamma$. Si $\Gamma$ est muni d'une m\'etrique invariante \`a gauche dont les boules sont finies (tout groupe d\'enombrable poss\`ede une telle m\'etrique), on peut interpr\'eter ce groupe de mani\`ere purement m\'etrique, \`a savoir comme l'ensemble des permutations $\sigma$ de $\Gamma$ \`a {\em d\'eplacement born\'e} au sens o\`u $\sup_{\gamma\in\Gamma}d(\gamma,\sigma(\gamma))<\infty$.

Si $\Gamma$ agit par auto\-hom\'eo\-mor\-phismes sur un espace topologique $X$, et si $x\in X$, soit $i_x$ l'application orbitale $\gamma\mapsto \gamma x$ de $\Gamma$ vers $X$. Si $x$ a un stabilisateur trivial, $i_x$ est une bijection de $\Gamma$ vers l'orbite $\Gamma x$, et l'action de $\lb \Gamma,X\rb$, restreinte \`a l'orbite $\Gamma x$, est conjugu\'ee, via la bijection $i_x$, \`a une action de $\lb \Gamma,X\rb$ sur $\Gamma$ par translations par morceaux, donn\'ee par le morphisme $j_x:\lb \Gamma,X\rb\to\TM(\Gamma)$ d\'efini par $j_x(\gamma)=i_x^{-1}\circ \gamma\circ i_x$.

On a vu (proposition \ref{psgl}) que pour $\Gamma=\Z$, pour un sous-d\'ecalage convenable $(X,\vpi)$ avec une orbite dense, le groupe $\lb \Z,X\rb=\lb\vpi\rb$ peut contenir un sous-groupe libre agissant fid\`element sur l'orbite d'un point $x$. Par cons\'equent, en prenant l'image par $j_x$, on voit que le groupe $\TM(\Z)$ contient \'egalement des sous-groupes libres, et n'est donc pas moyennable. Cependant c'est en utilisant ce groupe que Juschenko et Monod prouvent leur r\'esultat.

\begin{thm}[Crit\`ere de Juschenko-Monod]\label{cr_jm}
Soit $G$ un groupe et consid\'erons une action de $G$ sur $\Z$ par permutations \`a d\'eplacement born\'e, c'est-\`a-dire, donn\'ee par un morphisme $G\to\TM(\Z)$. On suppose que le stabilisateur $\{g\in G\mid g(\N)=\N\}$ de $\N$ est moyennable. Alors $G$ est lui-m\^eme moyennable. 
\end{thm}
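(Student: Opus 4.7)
Le plan est d'utiliser la structure tr\`es particuli\`ere donn\'ee par l'hypoth\`ese de d\'eplacement born\'e pour ramener la moyennabilit\'e de $G$ \`a la construction d'une moyenne invariante sur un $G$-ensemble d\'enombrable convenable o\`u les stabilisateurs sont d\'ej\`a moyennables. L'id\'ee directrice est que $G$ agit naturellement sur l'ensemble d\'enombrable des parties de $\Z$ commensurables \`a $\N$, et que cet ensemble h\'erite d'une structure affine sur un groupe ab\'elien.

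Premi\`ere \'etape (r\'eduction). L'hypoth\`ese de d\'eplacement born\'e implique que pour tout $g\in G$ la diff\'erence sym\'etrique $g(\N)\bigtriangleup\N$ est finie: si $d(g)=\sup_n|g(n)-n|$, alors $g(n)\in\N$ pour $n\ge d(g)$ et $g(n)\notin\N$ pour $n\le-d(g)-1$. Ainsi $G$ pr\'eserve l'ensemble d\'enombrable $\mathcal{E}=\{A\subseteq\Z:A\bigtriangleup\N\text{ fini}\}$, et le stabilisateur de $\N\in\mathcal{E}$ est le sous-groupe moyennable $H$ de l'hypoth\`ese; les stabilisateurs dans l'orbite $G\cdot\N$ lui sont conjugu\'es. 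On invoque alors le principe classique suivant: si $G$ agit sur un ensemble avec stabilisateurs moyennables et moyenne $G$-invariante, alors $G$ est moyennable (pour $f\in\ell^\infty(G)$, on pose $\tilde f(g)=m_H(h\mapsto f(gh))$, on voit que $\tilde f$ descend au quotient $G/H$, puis on int\`egre contre la moyenne invariante sur $G/H$). Il suffit donc de construire une moyenne $G$-invariante sur $G\cdot\N\cong G/H$.

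Deuxi\`eme \'etape (reformulation affine). La bijection $\iota:A\mapsto A\bigtriangleup\N$ identifie $\mathcal{E}$ au groupe ab\'elien $V:=\bigoplus_{\Z}\Z/2\Z$ des parties finies de $\Z$. Sous cette identification, l'action de $G$ devient l'action affine $g\cdot F=g_{\ast}F+\phi_g$, o\`u $g_{\ast}$ d\'esigne l'automorphisme de $V$ obtenu par permutation des coordonn\'ees et $\phi_g:=g(\N)\bigtriangleup\N$ est un 1-cocycle \`a valeurs dans $V$. L'orbite $G\cdot\N$ correspond \`a l'orbite de l'origine, \`a savoir $\{\phi_g:g\in G\}\subseteq V$. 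On cherche alors une suite $(\mu_n)$ de probabilit\'es \`a support fini sur $V$ telle que, pour tout $g\in G$, $\|g\cdot\mu_n-\mu_n\|_1\to 0$; toute valeur d'adh\'erence faible-* de $(\mu_n)$ fournira la moyenne $G$-invariante souhait\'ee.

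L'obstacle principal est cette construction. On dispose de deux ingr\'edients g\'eom\'etriques: (a) $V$ \'etant ab\'elien est moyennable, et les sous-groupes $V_n=\bigoplus_{|i|\le n}\Z/2\Z$ forment une suite de F\o lner canonique qui absorbe les translations par les cocycles $\phi_g$ (d\`es que $n$ est assez grand par rapport au support de $\phi_g$, la translation par $\phi_g$ pr\'eserve $V_n$); (b) le d\'eplacement born\'e assure que $g_{\ast}V_n\subseteq V_{n+d(g)}$, donc la partie lin\'eaire pr\'eserve presque la filtration. Mais le choix na\"\i f $\mu_n=$ mesure uniforme sur $V_n$ \'echoue: son pouss\'e par $g_{\ast}$ est la mesure uniforme sur le sous-groupe {\em distinct} $g_{\ast}V_n$ de $V$, et la distance $\ell^1$ entre deux mesures uniformes sur des sous-groupes finis distincts de $V$ reste une constante strictement positive (elle est essentiellement de l'ordre de $2(1-2^{-k})$ o\`u $k=|[-n,n]\setminus g([-n,n])|\ge 1$ en g\'en\'eral). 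La difficult\'e cruciale est donc de trouver une mesure $\mu_n$ suffisamment \'etal\'ee pour absorber le d\'ecalage induit par $g_{\ast}$ --- par exemple en convolant des mesures uniformes sur une cha\^{\i}ne $V_{n_1}\subset V_{n_2}\subset\cdots$ bien choisie, ou en utilisant une mesure produit \`a poids d\'ecroissants --- puis de v\'erifier par un argument probabiliste soigneux que l'effet de bord induit par $g_{\ast}$ devient asymptotiquement n\'egligeable devant la taille totale du support.
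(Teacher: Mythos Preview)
Your setup is essentially that of the paper: you correctly pass to the affine action of $G$ on $V=\mathsf{C}_2^{(\Z)}$ via $A\mapsto A\bigtriangleup\N$, and you correctly diagnose why the uniform measure on $V_n$ fails. But two genuine gaps remain.

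First, a logical inconsistency in your reduction. You announce that it suffices to find a $G$-invariant mean on the single orbit $G\cdot\N\cong G/H$, but you then look for a mean on \emph{all} of $V$. These are not the same thing: a finitely additive mean on $V$ may well give measure zero to every $G$-orbit, so it does not restrict to a mean on $G/H$. To use a mean on $V$ via the principle you quote, you need amenability of the stabilizer of \emph{every} point of $V$, i.e.\ of every $G_B$ with $B\bigtriangleup\N$ finite --- not just of $G_\N=H$. This is a genuine extra step: the paper handles it by first showing that each $G_{\N+k}$ is amenable (an induction on $k$ using that stabilizers of points of $\Z$ inside $G_{\N+k}$ sit in $G_{\N+k\pm 1}$), and then that any $G_B$ sits in the group generated by some $G_{\N+k}$ and the locally finite group of finitely supported permutations.

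Second --- and this is the heart of the matter --- you stop exactly where the real difficulty begins. The suggestions you make (convolutions of uniform measures along a chain, or product measures with decaying weights) are in the right spirit, but you give no construction and no estimate. The paper's solution is to pass to the Fourier-dual picture: one looks for functions $h_n\in L^2(\mathsf{C}_2^\Z)$ that are (i) asymptotically $\TM(\Z)$-invariant and (ii) asymptotically supported on the hyperplane $\{x_0=0\}$; the Fourier transforms $\widehat{h_n}$ then give an asymptotically invariant sequence in $\ell^2(V)$ for the whole semidirect product $V\rtimes\TM(\Z)$. The key is that one can take $h_n$ of \emph{product} form $h_n(x)=\prod_j h_{nj}(x_j)$ with $h_{nj}(0)=\sqrt{2}\cos\theta_{nj}$, $h_{nj}(1)=\sqrt{2}\sin\theta_{nj}$, and $\theta_{nj}=\frac{\pi}{4}\min(\sqrt{|j|/n},1)$; then $\langle h_n,g^{-1}h_n\rangle=\prod_j\cos(\theta_{nj}-\theta_{n\,g(j)})$, and the slowly varying profile $j\mapsto\theta_{nj}$ makes this product tend to $1$ for bounded-displacement $g$. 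Without an explicit construction of this type and the accompanying estimate, your argument is a plan rather than a proof.
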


On discutera la preuve de ce crit\`ere plus loin. Commen\c cons par la principale cons\'equence.

\begin{cor}
Le groupe plein-topologique $\lb\vpi\rb$ associ\'e \`a un auto\-hom\'eo\-mor\-phisme minimal d'un compact totalement s\'epar\'e $X$ est moyennable.
\end{cor}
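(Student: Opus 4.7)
Le plan est d'appliquer directement le critère de Juschenko-Monod (théorème~\ref{cr_jm}) au groupe $G=\lb\vpi\rb$, via une action naturelle sur $\Z$ obtenue en identifiant $\Z$ à une $\vpi$-orbite. D'abord, si $X$ est fini, alors $\lb\vpi\rb$ est un sous-groupe du groupe fini $\Homeo(X)$, et est donc trivialement moyennable; on peut donc supposer $X$ infini. La minimalité de $\vpi$ entraîne alors que toute $\vpi$-orbite est infinie (une orbite finie serait fermée donc égale à $X$), et en particulier l'action de $\vpi$ sur $X$ est libre.

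Fixons un point $x\in X$ et considérons l'application orbitale $i_x:\Z\to\vpi^\Z(x)$, $n\mapsto\vpi^n(x)$, qui est bijective par liberté. Pour tout $\psi\in\lb\vpi\rb$, la fonction étagée associée $\kappa_\psi:X\to\Z$ est continue et prend, par compacité de $X$, un nombre fini de valeurs; comme tout élément de $\lb\vpi\rb$ préserve chaque $\vpi$-orbite, le morphisme $j_x:\lb\vpi\rb\to\TM(\Z)$ défini par $j_x(\psi)=i_x^{-1}\circ\psi\circ i_x$ est bien à valeurs dans les permutations à déplacement borné de $\Z$ (avec $\sup_n|\kappa_\psi(\vpi^n(x))|$ comme borne de déplacement).

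Il reste à vérifier l'hypothèse cruciale du critère, à savoir que le stabilisateur de $\N$ sous cette action est moyennable. Or un élément $\psi\in\lb\vpi\rb$ stabilise $\N$ via $j_x$ si et seulement si $\psi(\vpi^\N(x))=\vpi^\N(x)$, c'est-à-dire $\psi\in\lb\vpi\rb_{[x\rangle}$. Le lemme de Putnam (lemme~\ref{lputnam}) affirme précisément que ce groupe est localement fini; il est donc moyennable, puisque les groupes localement finis, étant limites directes de groupes finis, sont moyennables. Le théorème~\ref{cr_jm} s'applique donc et fournit la moyennabilité de $\lb\vpi\rb$.

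Il n'y a pas vraiment d'obstacle ici: toute la difficulté a été concentrée en amont, dans la preuve du critère de Juschenko-Monod (théorème~\ref{cr_jm}) et dans le lemme de Putnam (lemme~\ref{lputnam}); une fois ces deux résultats acquis, le corollaire en découle par assemblage direct. On notera simplement qu'il est essentiel ici que $j_x$ ne soit pas supposé injectif (ce qui serait faux en général si une sous-orbite était non fidèle), le critère s'appliquant à tout morphisme vers $\TM(\Z)$.
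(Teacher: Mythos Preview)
Your proof is correct and follows essentially the same approach as the paper: fix $x\in X$, transport the action to $\Z$ via $j_x$, identify the stabilizer of $\N$ with $\lb\vpi\rb_{[x\rangle}$, invoke Putnam's lemma to get local finiteness, and conclude by the Juschenko--Monod criterion. You simply add a few extra details (the finite case, freeness of the action, why $j_x$ lands in $\TM(\Z)$) that the paper leaves implicit.
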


Le th\'eor\`eme s'applique: fixons en effet $x\in X$ et faisons agir $\lb\vpi\rb$ sur $\Z$ par permutations \`a d\'eplacement born\'e via $j_x$. Alors le stabilisateur de $\N$ est \'egal au stabilisateur $\lb\vpi\rb_{[x\rangle}$ de l'orbite positive $\{\vpi^{n}x:n\in\N\}$ de $x$, qui, par le lemme de Putnam (lemme \ref{lputnam}) est localement fini (c'est-\`a-dire r\'eunion croissante filtrante de groupes finis), donc moyennable. 

\subsection{Sur la d\'emonstration du crit\`ere}

Le crit\`ere est bas\'e sur le r\'esultat analytique suivant. Notons $\mathsf{C}_2$ le groupe cyclique d'ordre 2 et $\mathsf{C}_2^{(\Z)}$ le groupe des fonctions \`a support fini $\Z\to \mathsf{C}_2$, qui s'identifie \`a l'ensemble des parties finies de $\Z$.

\begin{thm}[Juschenko-Monod]\label{jm_an}
L'action affine de $\mathsf{C}_2^{(\Z)}\rtimes\TM(\Z)$ sur $\mathsf{C}_2^{(\Z)}$ pr\'eserve une moyenne.
\end{thm}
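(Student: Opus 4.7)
La strat\'egie est de v\'erifier une condition de type Reiter pour l'action affine du groupe $G := \mathsf{C}_2^{(\Z)} \rtimes \TM(\Z)$ sur $\mathsf{C}_2^{(\Z)}$: pour construire une moyenne $G$-invariante, il suffit d'exhiber, pour toute partie finie $S \subset G$ et tout $\varepsilon > 0$, une mesure de probabilit\'e $\mu$ sur $\mathsf{C}_2^{(\Z)}$ telle que $\|g_\ast\mu - \mu\|_1 < \varepsilon$ pour tout $g \in S$; un point d'accumulation faible-$\ast$ d'une telle famille fournit alors une moyenne invariante sur $\ell^\infty(\mathsf{C}_2^{(\Z)})$. \'Ecrivons un \'el\'ement g\'en\'eral sous la forme $g = (B, \sigma)$ avec $B \in \mathsf{C}_2^{(\Z)}$ fini et $\sigma \in \TM(\Z)$ de d\'eplacement born\'e par $d$; on a $g \cdot A = B \triangle \sigma(A)$, si bien que les deux familles de g\'en\'erateurs \`a g\'erer sont les translations $A \mapsto B \triangle A$ et les permutations de coordonn\'ees $A \mapsto \sigma(A)$.

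Avant toute construction, j'observerais que les deux candidats les plus naturels pour $\mu$ \'echouent, ce qui est au centre de la difficult\'e. La mesure uniforme sur les sous-ensembles \`a $k$ \'el\'ements de $[-n, n]$ est presque invariante par toute $\sigma$ fix\'ee, car $\sigma([-n,n])$ ne diff\`ere de $[-n,n]$ que par au plus $2d$ \'el\'ements; mais d\`es qu'on la translate par $B = \{0\}$, le cardinal change et l'image est port\'ee par des sous-ensembles de cardinal diff\'erent de $k$, donnant une distance $\ell^1$ proche de $1$. \`A l'inverse, la mesure produit Bernoulli$(1/2)$ sur $[-n, n]$ est exactement invariante par translation par tout $B \subset [-n,n]$, mais un d\'ecompte direct donne $\|\sigma_\ast \mu - \mu\|_1 = 1 - 2^{-|[-n,n] \triangle \sigma([-n,n])|}$, minor\'e par une constante strictement positive ne d\'ependant que de $d$. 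Interpoler par Bernoulli$(p_n)$ conduit \`a une erreur de translation de l'ordre de $|B|\,|1-2p_n|$ et \`a une erreur de permutation de l'ordre de $4dp_n$: la premi\`ere exige $p_n \to 1/2$, la seconde $p_n \to 0$, ce qui est incompatible pour une seule mesure produit.

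L'obstacle principal --- o\`u l'argument de Juschenko-Monod apporte son id\'ee d\'ecisive --- consiste donc \`a construire une mesure mixte absorbant simultan\'ement les deux erreurs. Je proc\'ederais par un mod\`ele \`a deux \'etages: pour chaque $n$, on tire d'abord un cardinal $k$ suivant une loi $\lambda_n$ sur $\{0, \ldots, 2n+1\}$ au profil de type F\o lner (par exemple uniforme sur un intervalle $[k_n - \ell_n, k_n + \ell_n]$ avec $\ell_n \to \infty$ et $\ell_n = o(n)$), puis, conditionnellement \`a $k$, on tire uniform\'ement un sous-ensemble de $[-n,n]$ \`a $k$ \'el\'ements. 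Toute permutation $\sigma$ \`a d\'eplacement born\'e commute avec la statistique de cardinal, de sorte que l'erreur de permutation ne provient que de l'effet de bord $|[-n,n] \triangle \sigma([-n,n])| \le 2d$ et tend vers $0$ avec $n$; par ailleurs, une translation par un $B$ fini d\'ecale le cardinal d'au plus $2|B|$, et la presque-invariance de $\lambda_n$ par de tels d\'ecalages entra\^{\i}ne $\|B_\ast\mu_n - \mu_n\|_1 \to 0$. V\'erifier pr\'ecis\'ement ces deux estim\'ees combinatoires (essentiellement des d\'ecomptes hyperg\'eom\'etriques) et les combiner pour obtenir une suite $(\mu_n)$ convenable constitue le point technique central; le passage \`a la limite faible-$\ast$ conclut ensuite.
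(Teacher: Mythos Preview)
Votre plan contient une erreur qui le fait \'echouer. Les mesures que vous proposez --- m\'elange, selon une loi $\lambda_n$, des mesures uniformes $\mu_n^{(k)}$ sur les $k$-sous-ensembles de $I=[-n,n]$ --- sont \emph{\'echangeables} sur $I$: la marginale $p:=P_\mu(j\in A)$ vaut $E_\lambda[k]/N$ pour tout $j\in I$ et $0$ pour $j\notin I$. Or, pour \emph{toute} probabilit\'e $\mu$ sur $\mathsf{C}_2^{(\Z)}$, en testant sur l'\'ev\'enement $\{0\in A\}$ on obtient $\|\tau_{0*}\mu-\mu\|_1\ge 2|2P(0\in A)-1|$, et en testant sur $\{-n\in A\}$ avec $\sigma$ le d\'ecalage de~$1$ on obtient $\|\sigma_*\mu-\mu\|_1\ge 2\,P(-n\in A)$. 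Pour vos mesures, les deux marginales valent $p$, d'o\`u
\[
\|\tau_{0*}\mu-\mu\|_1+\|\sigma_*\mu-\mu\|_1\;\ge\;2|2p-1|+2p\;\ge\;1,
\]
le minimum \'etant atteint en $p=1/2$. Aucune loi $\lambda_n$ ne peut donc rendre les deux termes simultan\'ement petits: votre mod\`ele \`a deux \'etages ne r\'esout pas la tension que vous avez vous-m\^eme identifi\'ee, il la d\'eplace simplement. L'erreur pr\'ecise se trouve dans l'affirmation {\og l'erreur de permutation ne provient que de l'effet de bord\fg}: on a bien $\sigma_*\mu_n^{(k)}=\mu_{\sigma(I)}^{(k)}$, mais $\|\mu_I^{(k)}-\mu_{\sigma(I)}^{(k)}\|_1=2k/N$ pour le d\'ecalage de~$1$, et ceci est $\approx 1$ d\`es que $k\approx N/2$, condition impos\'ee par la presque-invariance sous~$\tau_0$.

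L'id\'ee d\'ecisive de Juschenko--Monod (simplifi\'ee par de la Salle) est pr\'ecis\'ement de renoncer \`a l'\'echangeabilit\'e: on passe par dualit\'e de Fourier au groupe compact $\mathsf{C}_2^\Z$, o\`u l'on cherche $h_n\in L^2(\mathsf{C}_2^\Z)$ presque $\TM(\Z)$-invariants et presque support\'es par $\{x_0=0\}$; la transform\'ee de Fourier donne alors des vecteurs presque invariants dans $\ell^2(\mathsf{C}_2^{(\Z)})$ pour toute l'action affine. Concr\`etement, $|\widehat{h_n}|^2$ est une mesure de Bernoulli \emph{produit} de param\`etres $p_j=\tfrac12(1-\sin 2\theta_{nj})$ variant avec la position: $p_0=1/2$ (ce qui rend $\tau_0$ exactement invariante), $p_j=0$ pour $|j|\ge n$, et le profil $\theta_{nj}=\tfrac{\pi}{4}\min(\sqrt{|j|/n},1)$ varie assez lentement pour que $\sum_j(\theta_{nj}-\theta_{n\,\sigma(j)})^2\to 0$. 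C'est cette d\'ependance spatiale du param\`etre, inaccessible \`a un sch\'ema \'echangeable, qui r\'econcilie les deux contraintes.
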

Ici, l'action est simplement l'action affine engendr\'ee par l'action $\Z/2\Z$-lin\'eaire de $\TM(\Z)$ et par le groupe des translations. Le th\'eor\`eme s'applique pour d\'emon\-trer le crit\`ere \ref{cr_jm} \`a l'aide du fait suivant.

\begin{fact}\label{stabm}
Si un groupe discret $G$ agit sur un ensemble $X$ en pr\'eservant une moyenne, et si chaque point a un stabilisateur moyennable, alors $G$ est lui-m\^eme moyennable.
\end{fact}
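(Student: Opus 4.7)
Voici l'approche que j'envisagerais pour ce fait. L'id\'ee est de fabriquer une moyenne $G$-invariante sur $G$ en combinant la moyenne $m$ sur $X$ avec des moyennes invariantes sur les stabilisateurs, orbite par orbite. On utilise l'axiome du choix pour s\'electionner, dans chaque $G$-orbite $O\subset X$, un point $x_O\in O$, ainsi que (par moyennabilit\'e de $G_{x_O}$) une moyenne $n_O$ sur $G_{x_O}$ invariante par translations \`a gauche.

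\'Etant donn\'ee une fonction born\'ee $f:G\to\R$, je construirais une fonction born\'ee $\tilde f:X\to\R$ comme suit: pour $y\in X$ situ\'e dans l'orbite $O$, on \'ecrit $y=gx_O$ (choix d'un $g\in G$ arbitraire) et on pose
\[\tilde f(y)=n_O\bigl(h\mapsto f(gh)\bigr).\]
La premi\`ere v\'erification cruciale est que cette d\'efinition ne d\'epend pas du choix de $g$: si $gx_O=g'x_O$, alors $g^{-1}g'\in G_{x_O}$, et l'invariance \`a gauche de $n_O$ entra\^\i ne l'\'egalit\'e des deux moyennes. On pose alors $M(f)=m(\tilde f)$; c'est clairement une forme lin\'eaire positive de norme 1 sur $\ell^\infty(G)$, autrement dit une moyenne sur $G$.

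Il reste \`a v\'erifier l'invariance \`a gauche de $M$, qui est la v\'eritable \'etape cl\'e. Pour $k\in G$ et $f\in\ell^\infty(G)$, posons $L_kf(g)=f(k^{-1}g)$. Pour $y=gx_O$, on calcule
\[\widetilde{L_kf}(y)=n_O\bigl(h\mapsto f(k^{-1}gh)\bigr)=\tilde f(k^{-1}y),\]
puisque $k^{-1}y=(k^{-1}g)x_O$ reste dans la m\^eme orbite $O$ et s'\'evalue avec le m\^eme repr\'esentant $x_O$. L'invariance de $m$ sous l'action de $G$ sur $X$ donne alors $M(L_kf)=m(\tilde f\circ k^{-1})=m(\tilde f)=M(f)$, ce qui conclut que $G$ est moyennable.

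L'unique subtilit\'e conceptuelle est de bien disposer de l'axiome du choix pour les repr\'esentants $x_O$ et les moyennes $n_O$ simultan\'ement; une fois cela fait, les v\'erifications sont purement m\'ecaniques. Aucun probl\`eme technique s\'erieux n'est \`a pr\'evoir, et la d\'emonstration se d\'ecline essentiellement comme le classique th\'eor\`eme disant qu'une extension d'un groupe moyennable par un groupe moyennable est moyennable, adapt\'e ici \`a une action avec stabilisateurs variables.
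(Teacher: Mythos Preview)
Your proof is correct. The well-definedness check and the verification of left-invariance are both fine, and the whole argument is the standard ``averaging over cosets'' trick, applied orbit by orbit.

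The paper, however, takes a different route: it uses the fixed-point characterization of amenability. Given an affine action of $G$ on a compact convex set $K$, the paper picks (as you do) one representative $i$ per orbit, uses amenability of the stabilizer $H_i$ to find a fixed point $\kappa_i\in K$, and extends $i\mapsto\kappa_i$ to a $G$-equivariant map $X\to K$. The invariant mean on $X$ is then pushed forward to a mean on $K$, which by Riesz gives an invariant probability measure on $K$, whose barycenter is the desired fixed point. Structurally the two proofs are parallel --- both are ``choose a section, average along fibres, then push forward'' --- but yours works directly with the invariant-mean definition and avoids the detour through compact convex sets, Riesz representation, and barycenters. Your argument is the more elementary one; the paper's has the slight advantage of verifying the fixed-point property directly, which is what is actually used afterwards in the proof of the Juschenko--Monod criterion.
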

\begin{proof}
On commence par rappeler que $G$ est moyennable si, et seulement si, pour toute action de $G$ par transformations affines continues sur un espace localement convexe pr\'eservant un convexe compact non vide $K$, l'action fixe un point de $K$ (au besoin, on choisit cela comme d\'efinition de moyennabilit\'e; voir par exemple \cite{Gre}). Soit donc une telle action; il suffit en fait de montrer que $G$ pr\'eserve une probabilit\'e  sur les bor\'eliens de $K$, car le barycentre de cette derni\`ere (voir par exemple \cite[Theorem 2.29]{Luk}) est alors un point de $K$ fix\'e par~$G$.

On choisit donc un sous-ensemble $I\subset X$ contenant exactement un \'el\'ement de chaque $G$-orbite, et pour tout $i\in I$ soit $H_i$ son stabilisateur. Par moyennabilit\'e, $H_i$ fixe un point $\kappa_i\in K$. L'application $i\mapsto\kappa_i$ s'\'etend de mani\`ere unique en une fonction $G$-\'equivariante $X\to K$. L'image directe d'une moyenne invariante sur $X$ est une moyenne invariante sur $K$, d\'efinissant une forme lin\'eaire positive normalis\'ee $G$-invariante sur $\ell^\infty(K)$. Celle-ci se restreint en une forme lin\'eaire positive normalis\'ee sur $C(K)$, qui d\'efinit par le th\'eor\`eme de repr\'esentation de Riesz une unique mesure de probabilit\'e de Radon sur $K$, qui par unicit\'e est $G$-invariante. 
\end{proof}

\begin{proof}[Preuve du th\'eor\`eme \ref{cr_jm}]
Soit maintenant $G$ v\'erifiant le crit\`ere du th\'eor\`eme \ref{cr_jm} et montrons que $G$ est moyennable.

L'hypoth\`ese de stabilisateur implique en particulier que le noyau de l'action de $G$ sur $\Z$ est moyennable; la moyennabilit\'e \'etant stable par extensions et passage au quotient, on peut donc se ramener au cas d'une action fid\`ele, ce qui revient \`a supposer que $G$ est contenu dans $\TM(\Z)$.

%On peut se ramener pour cela au cas d'une action fid\`ele, car le noyau de l'action est moyennable (\'etant contenu dans le stabilisateur global de $\N$) et la moyennabilit\'e est stable par extensions et passage au quotient.V\'erifions maintenant que $G_B$ est moyennable, en 

V\'erifions d'abord que pour toute partie $B$ de $\Z$ telle que $\N\bigtriangleup B$ est finie, le stabilisateur $G_B$ de $B$ dans $G$ est moyennable. Commen\c cons par le cas 
de $B=\N+k=\{k,k+1,\dots\}$ pour $k\in\Z$. En effet, remarquons d'abord que tout sous-groupe de $\TM(\Z)$ pr\'eserve une moyenne sur chacune de ses orbites dans $\Z$. Si par l'absurde $G_{\N+k}$ est non moyennable, alors pour tout $j\in\Z$, en appliquant le fait \ref{stabm} \`a l'orbite de $j$ pour l'action de $G_{\N+k}$, on obtient que le stabilisateur $(G_{\N+k})_j$ est non moyennable. Or pour $j=k$ ce stabilisateur est contenu dans $G_{\N+k+1}$ et pour $j=k-1$ ce stabilisateur est contenu dans $G_{\N+k-1}$. Ceci montre que $G_{\N+k+1}$ et $G_{\N+k-1}$ sont non moyennables, et donc par une r\'ecurrence de chaque c\^ot\'e, $G_{\N+j}$ est non moyennable pour tout $j\in\Z$, contredisant pour $j=0$ l'hypoth\`ese du crit\`ere. Si maintenant $B$ est quelconque, il existe une permutation \`a support fini de $\Z$ \'echangeant $B$ et $\N+k$ pour un certain (unique) $k\in\Z$, on remarque alors que le stabilisateur $G_B$ est inclus dans le sous-groupe engendr\'e par le stabilisateur de $G_{\N+k}$, qui est moyennable par ce qui pr\'ec\`ede, et par le groupe des permutations \`a support fini, qui est distingu\'e et moyennable. Le stabilisateur $G_B$ est donc lui-m\^eme moyennable.
  
Terminons maintenant la preuve. On a une premi\`ere action, dite ordinaire, de $\TM(\Z)$ sur le $\Z/2\Z$-espace vectoriel $\mathsf{C}_2^{(\Z)}$, donn\'ee par $w\cdot \mathbf{1}_A=\mathbf{1}_{w(A)}$. On a \'egalement une action affine de $\TM(\Z)$ sur $\mathsf{C}_2^{(\Z)}$, de partie lin\'eaire l'action ordinaire, donn\'ee par $$w\cdot \mathbf{1}_A=\mathbf{1}_{w(A)}+\mathbf{1}_\N-\mathbf{1}_{w(\N)}.$$ Le stabilisateur de $\mathbf{1}_A$ pour cette action affine est \'egal au stabilisateur pour l'action ordinaire de la diff\'erence sym\'etrique $A\bigtriangleup\N$. Cette action affine pr\'eserve une moyenne par le th\'eor\`eme \ref{jm_an}.
Pour toute partie finie $A$, le stabilisateur de $\mathbf{1}_A$  dans $G$ pour l'action affine est \'egal au stabilisateur $G_{\N\bigtriangleup A}$ de $\N\bigtriangleup A$ pour l'action ordinaire dont on vient de v\'erifier la moyennabilit\'e. Le fait \ref{stabm} s'applique donc \`a l'action affine de $G$ sur $\mathsf{C}_2^{(\Z)}$, si bien que $G$ est moyennable.
\end{proof}

\subsection{Preuve du th\'eor\`eme \ref{jm_an}}

Le groupe $\mathsf{C}_2^\Z$ des fonctions $\Z\to\mathsf{C}_2$ est muni de la topologie produit, qui en fait un groupe compact, et de la mesure produit de la mesure \'equir\'epartie sur $\mathsf{C}_2=\{0,1\}$, qui est sa mesure de Haar.

Juschenko et Monod observent que l'existence d'une moyenne invariante comme dans le th\'eor\`eme \ref{jm_an} d\'ecoule de l'existence d'une suite d'\'el\'ements non nuls $f_n\in L^2(\mathsf{C}_2^\Z,\R)$ telle que
\begin{enumerate}[(1)]
\item\label{fnai} $(f_n)$ est asymptotiquement $\TM(\Z)$-invariante, c'est-\`a-dire que pour tout $g\in\TM(\Z)$ on a $\lim_{n\to\infty}\|f_n-gf_n\|_2/\|f_n\|_2=0$;
\item\label{fnze} $(f_n)$ est asymptotiquement support\'ee par l'hyperplan $H_0=\mathsf{C}_2^{\Z\smallsetminus\{0\}}=\{u\in \mathsf{C}_2^\Z:u(0)=0\}$, au sens o\`u $\lim_{n\to\infty}\|f_n\mathbf{1}_{H_0}\|_2/\|f_n\|_2=1$.
\end{enumerate}

En effet, consid\'erons une telle suite $(f_n)$. La suite des transform\'ees de Fourier $(\widehat{f_n})$ est une suite dans $\ell^2\big(\mathsf{C}_2^{(\Z)}\big)$, asymptotiquement $\TM(\Z)$-invariante par (\ref{fnai}). La transform\'ee de Fourier entrelace la multiplication par $\mathbf{1}_{H_0}$ \`a la projection sur l'ensemble des fonctions dans $\ell^2\big(\mathsf{C}_2^{(\Z)}\big)$ 
qui sont invariantes par $\mathsf{C}_2^{(\{0\})}$. Donc par (\ref{fnze}), la suite $(\widehat{f_n})$ est asymptotiquement $\mathsf{C}_2^{(\{0\})}$-invariante. Puisque $\mathsf{C}_2^{(\Z)}\rtimes\TM(\Z)$ est engendr\'e par $\mathsf{C}_2^{(\{0\})}$ et $\TM(\Z)$, on en d\'eduit que la suite $(\widehat{f_n})$ est asymptotiquement $\mathsf{C}_2^{(\Z)}\rtimes\TM(\Z)$-invariante; autrement dit en posant $u_n=|\widehat{f_n}|/\|f_n\|_2$, on a $\|u_n\|_2=1$ et $\|u_n-gu_n\|_2\to 0$ pour tout $g\in \mathsf{C}_2^{(\Z)}\rtimes\TM(\Z)$. Ainsi $\|u_n^2\|_1=1$ et, par l'in\'egalit\'e de Cauchy-Schwarz, on obtient
$$\|u_n^2-gu_n^2\|_1\le \|u_n+gu_n\|_2\|u_n-gu_n\|_2\le 2\|u_n-gu_n\|_2,$$
si bien que $\|u_n^2-gu_n^2\|_1\to 0$. 
Par cons\'equent, si $m$ est une valeur d'adh\'erence faible-* dans $\ell^\infty\big(\mathsf{C}_2^{(\Z)}\big)$ de la suite de probabilit\'es $(u_n^2)$ de $\ell^1\big(\mathsf{C}_2^{(\Z)}\big)$, alors $m$ d\'efinit une moyenne $\mathsf{C}_2^{(\Z)}\rtimes\TM(\Z)$-invariante sur~$\mathsf{C}_2^{(\Z)}$.

Cette observation \'etant faite, Juschenko et Monod donnent explicitement une suite $(f_n)_{n\ge 1}$, \`a savoir, en identifiant $\mathsf{C}_2$ et $\{0,1\}$
\begin{eqnarray*}
f_n\colon\qquad \{0,1\}^\Z & \to & ]0,1]\\
x=(x_j)_{j\in\Z} & \mapsto & \exp\left(-n\sum_{j\in\Z}x_je^{-|j|/n}\right)
\end{eqnarray*}

La condition (\ref{fnze}) se v\'erifie facilement, tandis que (\ref{fnai}) demande plus de travail (d'analyse \'el\'ementaire).

On va utiliser ici une autre suite de fonctions, qui nous a \'et\'e indiqu\'ee par M.~de la Salle, pour laquelle ces v\'erifications sont plus directes. On d\'efinit, pour $n\ge 1$
\begin{eqnarray*}
h_n\colon\qquad \{0,1\}^\Z & \to & [0,1]\\
x=(x_j)_{j\in\Z} & \mapsto & \prod_{j\in\Z}h_{nj}(x_j),
\end{eqnarray*}
avec
\[(h_{nj}(0),h_{nj}(1))= \sqrt{2}(\cos \theta_{nj},\sin \theta_{nj});      \qquad \theta_{nj} = \frac{\pi}4 \min\left(\sqrt{\frac{|j|}{n}},1\right)\in [0,\pi/4].  \]  

Remarquons que le produit d\'efinissant $h_n(x)$ est fini puisque $h_{nj} = 1$ si $|j|>n$. On observe \'egalement que $\theta_{n0}=0$, si bien que $h_{n0}(1)=0$ et $h_n(x)=0$ si $x_0=1$. En d'autres termes, on a $h_n=h_n\mathbf{1}_{H_0}$, ce qui donne (\ref{fnze}) (avec $h_n$ en lieu et place de $f_n$).
Notons que $\|h_n\|_2=1$ pour tout $n\ge 1$, puisque $\|h_{nj}\|_2=1$ pour tous $n$ et $j$ (l'ensemble $\{0,1\}$ \'etant muni de la mesure de Haar de $\mathsf{C}_2$). 
Il reste \`a prouver (\ref{fnai}), ou, ce qui revient au m\^eme:

\begin{propp}\label{mainprop2}
Pour tout $g\in\TM(\Z)$ on a
\[\lim_{n\to\infty}\langle h_n,g^{-1}h_n\rangle =1.\]
\end{propp}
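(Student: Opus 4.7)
L'approche est une estimation \'el\'ementaire reposant sur la structure produit tensoriel de $h_n$ et la propri\'et\'e de d\'eplacement born\'e de $g$. Je commencerais par r\'e\'ecrire le produit scalaire sous forme de produit sur $\Z$. L'action de $g$ sur $\{0,1\}^\Z$ est $(g\cdot x)_j=x_{g^{-1}(j)}$, donc $(g^{-1}h_n)(x)=h_n(g\cdot x)=\prod_k h_{n,g(k)}(x_k)$ apr\`es changement d'indice. Comme la mesure de Haar sur $\{0,1\}^\Z$ est la mesure produit, l'int\'egrale se factorise coordonn\'ee par coordonn\'ee, et un calcul imm\'ediat donne
\[\langle h_{nk},h_{n,g(k)}\rangle_{\{0,1\}}=\cos\theta_{nk}\cos\theta_{n,g(k)}+\sin\theta_{nk}\sin\theta_{n,g(k)}=\cos(\theta_{nk}-\theta_{n,g(k)}).\]
Donc $\langle h_n,g^{-1}h_n\rangle=\prod_{k\in\Z}\cos(\theta_{nk}-\theta_{n,g(k)})$; il faut montrer que ce produit tend vers 1. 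Puisque tous les facteurs sont dans $[0,1]$, c'est \'equivalent \`a $\sum_{k\in\Z}(-\log\cos(\theta_{nk}-\theta_{n,g(k)}))\to 0$, et comme $-\log\cos\theta\le \theta^2$ pour $\theta\in[0,\pi/4]$, il suffit de v\'erifier que $\sum_k(\theta_{nk}-\theta_{n,g(k)})^2\to 0$.

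Soit $M$ un entier tel que $|g(k)-k|\le M$ pour tout $k\in\Z$. Pour $|k|>n+M$, on a $|k|,|g(k)|>n$, donc $\theta_{nk}=\theta_{n,g(k)}=\pi/4$ et le terme correspondant s'annule. Pour $|k|\le M$, les deux angles sont d'ordre $O(1/\sqrt n)$, donc chacun des $O(M)$ termes contribue $O(1/n)$, et au total $O(M/n)$. Pour $M<|k|\le n$, les entiers $k$ et $g(k)$ sont de m\^eme signe, et l'on utilise l'identit\'e $\sqrt a-\sqrt b=(a-b)/(\sqrt a+\sqrt b)$ pour obtenir
\[|\theta_{nk}-\theta_{n,g(k)}|=\frac{\pi}{4\sqrt n}\bigl|\sqrt{|k|}-\sqrt{|g(k)|}\bigr|\le \frac{\pi M}{4\sqrt n\bigl(\sqrt{|k|}+\sqrt{|g(k)|}\bigr)}.\]
En \'elevant au carr\'e et en utilisant $|g(k)|\ge |k|-M$, on majore chaque terme par $CM^2/(n|k|)$ pour une constante universelle $C$; la somme sur $M<|k|\le n$ vaut donc $O((M^2\log n)/n)$. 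Enfin, pour $n<|k|\le n+M$, on a $\theta_{nk}=\pi/4$ et $\theta_{n,g(k)}=\frac{\pi}{4}\sqrt{|g(k)|/n}$ avec $|g(k)|/n\in[1-M/n,1]$, de sorte que $(\theta_{nk}-\theta_{n,g(k)})^2=O(M^2/n^2)$; il y a $O(M)$ tels termes contribuant $O(M^3/n^2)$.

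En sommant ces trois contributions, on obtient $\sum_k(\theta_{nk}-\theta_{n,g(k)})^2=O((M^2\log n)/n)$, qui tend vers 0 lorsque $n\to\infty$, d'o\`u la conclusion. La seule subtilit\'e est d'organiser correctement le d\'ecoupage selon les trois r\'egimes $|k|$ petit, $|k|$ moyen et $|k|$ proche de $n$; le choix de la fonction $\sqrt{|j|/n}$ (plut\^ot qu'une interpolation lin\'eaire) sert pr\'ecis\'ement \`a garantir que les diff\'erences d'angles d\'ecroissent en $1/\sqrt{|k|}$ dans le r\'egime interm\'ediaire, ce qui rend la somme $\sum 1/|k|$ du terme principal convergente apr\`es division par $n$.
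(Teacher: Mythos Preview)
Your proof is correct and follows essentially the same route as the paper: factorize $\langle h_n,g^{-1}h_n\rangle$ as $\prod_k\cos(\theta_{nk}-\theta_{n,g(k)})$, use $\cos\theta\ge e^{-\theta^2}$ on $[-\pi/4,\pi/4]$, and bound $\sum_k(\theta_{nk}-\theta_{n,g(k)})^2$ by $O(M^2\log n/n)$ via the square-root difference estimate. The only cosmetic difference is that the paper uses a single uniform bound $|\sqrt{j}-\sqrt{k}|\le|j-k|/\max(1,\sqrt{j})$ instead of splitting into three regimes; note also that in your middle range the displayed equality should really be an inequality (since $|g(k)|$ may exceed $n$ when $|k|$ is close to $n$, so $\theta_{n,g(k)}$ is truncated to $\pi/4$), but as $t\mapsto\min(t,1)$ is $1$-Lipschitz the bound you need still holds.
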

\begin{proof}On a $g^{-1}h_n(x)=h_n(gx)$ et $(gx)_j=x_{g^{-j}(j)}$, si bien que $(g^{-1}h_n)(x)=\prod_{j\in\Z}h_{n\,g(j)}(x_j)$. 
On a donc, par le th\'eor\`eme de Fubini, la formule 
\[\langle h_n,g^{-1}h_n\rangle=\prod_{j\in\Z}\langle h_{nj},h_{n\,g(j)}\rangle=\prod_{j\in\Z}\cos(\theta_{nj}-\theta_{n\,g(j)}).\]
En utilisant l'in\'egalit\'e $\cos(x)\ge \exp(-x^2)$ valable pour tout $x\in [-\pi/4,\pi/4]$, on en d\'eduit
\begin{equation}\langle h_n,g^{-1}h_n\rangle\ge \exp\left(-\sum_{j\in\Z}(\theta_{nj}-\theta_{n\,g(j)})^2\right).\label{hn}\end{equation}
On va majorer la somme dans le terme de droite. En utilisant que
\[|\sqrt{j}-\sqrt{k}|\le \frac{|j-k|}{\max(1,\sqrt{j})},\quad\forall j,k\in\N,\]
on obtient, en posant $c=\sup_{j\in\Z}\big||g(j)|-|j|\big|$
\[|\theta_{nj}-\theta_{n\,g(j)}|\le \frac{\pi c}{4\sqrt{n}}\frac{1}{\max(1,\sqrt{|j|})}.\]
 Il en d\'ecoule, en remarquant en outre que $\theta_{nj}-\theta_{n\,g(j)}=0$ si $|j|\ge n+c$

\begin{align*}
\sum_{j\in\Z}(\theta_{nj}-\theta_{n\,g(j)})^2 = &
\sum_{|j|\le n+c}(\theta_{nj}-\theta_{n\,g(j)})^2\\
\le & \frac{\pi^2c^2}{16n}\sum_{|j|\le n+c}\frac{1}{\max(1,|j|)}\\
\le & \frac{\pi^2c^2}{16n}(3+\log(n+c))\\
& =O\left(c^2\frac{\log n}n+c^2\log c\right)=O_c\left(\frac{\log n}n\right)
\end{align*}
donc par (\ref{hn}) on obtient que $\langle h_n,g^{-1}h_n\rangle$ tend bien vers~1 \`a $g$ fix\'e.
\end{proof}

\begin{rema}
La preuve de la proposition \ref{mainprop2} montre que pour tout $g\in\TM(\Z)$ fix\'e, on a $1-\langle h_n,g h_n\rangle=O(\log(n)/n)$, ou de mani\`ere \'equivalente $\|h_n-gh_n\|_2=O((\log n/n)^{1/2})$. 

D'autre part, remarquons que la preuve de la proposition s'applique sans changement \`a tout $g\in\TM(|\Z|)$, o\`u ce dernier d\'esigne le groupe des permutations $g$ de $\Z$ telles que $\sup_{n\in\Z}\Big||g(n)|-|n|\Big|<\infty$. Ce groupe contient strictement $\TM(\Z)$, par exemple il contient l'involution $n\mapsto -n$ qui n'est pas dans $\TM(\Z)$; il est \'egalement isomorphe au sous-groupe $\TM(\N)$ de $\TM(\Z)$, comme on voit en conjuguant par la bijection 
\mbox{2-lipschitzienne} de $\Z$ sur $\N$ qui envoie $n\ge 0$ sur $2n$ et $-n\le -1$ sur $2n-1$.
\end{rema}

\appendix
\numberwithin{thm}{section} 
\section{Classes \'el\'ementaires}

Soit $\mathcal{C}$ une classe de groupes (invariante par isomorphie). On d\'efinit $\mathscr{E}(\mathcal{C})$ comme la plus petite classe de groupes contenant $\mathcal{C}$ et stable par isomorphie et passage aux sous-groupes, quotients, extensions et limites inductives filtrantes. On l'appelle classe \'el\'ementaire engendr\'ee par $\mathcal{C}$.

Le lemme suivant est bien connu.
\begin{lem}\label{lre}
On suppose $\mathcal{C}$ stable par passage aux sous-groupes et aux quotients. On d\'efinit par r\'ecurrence transfinie
\begin{itemize}
\item $\mathscr{E}_0(\mathcal{C})=\mathcal{C}$;
\item si $\alpha$ est un ordinal, $\mathscr{E}_{\alpha+1}(\mathcal{C})$ comme l'ensemble des groupes isomorphes \`a une extension \`a noyau dans $\mathscr{E}_\alpha(\mathcal{C})$ et quotient dans $\mathcal{C}$, ou \`a une limite directe (c'est-\`a-dire limite inductive filtrante d'injections) de groupes de $\mathscr{E}_\alpha(\mathcal{C})$;
\item $\mathscr{E}_\alpha(\mathcal{C})=\bigcup_{\beta<\alpha}\mathscr{E}_\beta(\mathcal{C})$ pour $\alpha$ ordinal limite.
\end{itemize}
Alors $\mathscr{E}(\mathcal{C})=\bigcup_\alpha\mathscr{E}_\alpha(\mathcal{C})$. En particulier $\mathscr{E}(\mathcal{C})$ est la plus petite classe de groupes stable par isomorphie et par passage aux limites directes et extensions \`a quotient dans $\mathcal{C}$.
\end{lem}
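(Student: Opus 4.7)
Notons $\mathscr{E}_\infty(\mathcal{C})=\bigcup_\alpha\mathscr{E}_\alpha(\mathcal{C})$ la classe construite par r\'ecurrence transfinie et $\mathscr{F}(\mathcal{C})$ la plus petite classe contenant $\mathcal{C}$, stable par isomorphie, limites directes, et extensions \`a quotient dans $\mathcal{C}$. Le plan est d'\'etablir la cha\^\i ne $\mathscr{F}(\mathcal{C})\subseteq \mathscr{E}_\infty(\mathcal{C})\subseteq\mathscr{E}(\mathcal{C})\subseteq\mathscr{F}(\mathcal{C})$. L'inclusion $\mathscr{E}_\infty(\mathcal{C})\subseteq\mathscr{E}(\mathcal{C})$ est une r\'ecurrence transfinie imm\'ediate puisque $\mathscr{E}(\mathcal{C})$ contient $\mathcal{C}$ et est clos par les op\'erations invoqu\'ees. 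Pour $\mathscr{F}(\mathcal{C})\subseteq\mathscr{E}_\infty(\mathcal{C})$, on v\'erifie que $\mathscr{E}_\infty(\mathcal{C})$ est stable par les op\'erations d\'efinissant $\mathscr{F}$: pour tout syst\`eme direct $(G_i)_{i\in I}$ avec $G_i\in\mathscr{E}_{\alpha_i}(\mathcal{C})$, on prend $\alpha=\sup_i\alpha_i$ (licite car $I$ est un ensemble), donc chaque $G_i\in\mathscr{E}_\alpha(\mathcal{C})$ et $\varinjlim G_i\in\mathscr{E}_{\alpha+1}(\mathcal{C})$; m\^eme argument pour une extension.

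Le c\oe ur de la preuve est l'inclusion $\mathscr{E}(\mathcal{C})\subseteq\mathscr{F}(\mathcal{C})$, qui revient \`a \'etablir que $\mathscr{E}_\infty(\mathcal{C})$ est stable par sous-groupes, quotients et extensions g\'en\'erales. On proc\`ede par r\'ecurrence transfinie sur $\alpha$ pour les deux premi\`eres op\'erations. Pour $G\in\mathscr{E}_{\alpha+1}(\mathcal{C})$ provenant d'une extension $1\to N\to G\to Q\to 1$ avec $N\in\mathscr{E}_\alpha(\mathcal{C})$ et $Q\in\mathcal{C}$, tout sous-groupe $H\le G$ s'ins\`ere dans $1\to H\cap N\to H\to HN/N\to 1$, o\`u $H\cap N\in\mathscr{E}_\alpha(\mathcal{C})$ par r\'ecurrence et $HN/N\le Q$ est dans $\mathcal{C}$ par stabilit\'e hypoth\'etique de $\mathcal{C}$, donc $H\in\mathscr{E}_{\alpha+1}(\mathcal{C})$. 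Un quotient $G/K$ se traite sym\'etriquement via $NK/K\cong N/(N\cap K)$. Pour une limite directe $G=\varinjlim G_i$, un sous-groupe $H$ est $\varinjlim(H\cap G_i)$ et un quotient $G/K$ est $\varinjlim G_i/(G_i\cap K)$, ce qui permet de conclure.

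Le point le plus d\'elicat est la stabilit\'e par extensions quelconques. Fixant $N\in\mathscr{E}_\infty(\mathcal{C})$, on montre par r\'ecurrence transfinie imbriqu\'ee sur $\alpha$ que toute extension $1\to N\to G\to Q\to 1$ avec $Q\in\mathscr{E}_\alpha(\mathcal{C})$ v\'erifie $G\in\mathscr{E}_\infty(\mathcal{C})$. Le cas $\alpha=0$ est la stabilit\'e d\'ej\`a \'etablie. Au pas $\alpha+1$, si $Q$ est une extension $1\to M\to Q\to P\to 1$ avec $M\in\mathscr{E}_\alpha(\mathcal{C})$ et $P\in\mathcal{C}$, l'astuce-clef est de consid\'erer l'image r\'eciproque $\tilde M=\pi^{-1}(M)$ par la projection $\pi\colon G\to Q$: on obtient $1\to N\to\tilde M\to M\to 1$ avec $M\in\mathscr{E}_\alpha(\mathcal{C})$, d'o\`u $\tilde M\in\mathscr{E}_\infty(\mathcal{C})$ par r\'ecurrence interne, puis $1\to\tilde M\to G\to P\to 1$ avec $P\in\mathcal{C}$ donne $G\in\mathscr{E}_\infty(\mathcal{C})$. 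Si $Q=\varinjlim Q_i$ avec $Q_i\in\mathscr{E}_\alpha(\mathcal{C})$, alors $G=\varinjlim\pi^{-1}(Q_i)$, chaque $\pi^{-1}(Q_i)$ \'etant dans $\mathscr{E}_\infty(\mathcal{C})$ par r\'ecurrence. Les ordinaux limites se traitent en passant au $\beta<\alpha$ o\`u $Q\in\mathscr{E}_\beta(\mathcal{C})$.

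Le principal obstacle sera de s'assurer que la r\'ecurrence transfinie imbriqu\'ee (sur le rang de $Q$, \`a $N$ fix\'e) est bien fond\'ee et que le passage par les images r\'eciproques d\'ecompose correctement l'extension en deux couches, l'une trait\'ee par r\'ecurrence interne, l'autre par l'hypoth\`ese \'el\'ementaire; une fois ce m\'ecanisme d\'ebloqu\'e, toutes les autres v\'erifications sont des cas particuliers de manipulations standard avec le deuxi\`eme th\'eor\`eme d'isomorphisme.
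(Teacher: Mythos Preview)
Your proof is correct and follows essentially the same approach as the paper: the same transfinite induction showing each $\mathscr{E}_\alpha(\mathcal{C})$ is stable under subgroups and quotients, and the same nested transfinite induction on the rank of the quotient $Q$ (using the preimage $\pi^{-1}(M)$ to split a general extension into two manageable pieces) to obtain closure under arbitrary extensions. The only difference is that you spell out the subgroup/quotient stability and the triple inclusion more explicitly than the paper, which dispatches these in a single sentence; the substantive argument is identical.
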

\begin{proof}
Par une r\'ecurrence transfinie imm\'ediate, chaque $\mathscr{E}_\alpha(\mathcal{C})$ est stable par passage aux sous-groupes, et toute limite directe de groupes dans $\mathscr{E}_\alpha(\mathcal{C})$ est dans $\mathscr{E}_{\alpha+1}(\mathcal{C})$, si bien que $\mathcal{C}'=\bigcup_\alpha\mathscr{E}_\alpha(\mathcal{C})$ est stable par passage aux sous-groupes, quotients et limites directes; la stabilit\'e \`a la fois par passage au quotient et aux limites directes implique la stabilit\'e par passage aux limites inductives filtrantes. Il reste \`a v\'erifier la stabilit\'e de $\mathcal{C}'$ par passage aux extensions. V\'erifions, par r\'ecurrence transfinie sur $\beta$, qu'il est stable par extension avec quotient dans $\mathscr{E}_\beta(\mathcal{C})$. C'est vrai pour $\beta=0$; on suppose que c'est acquis pour tout $\beta'<\beta$. Si $\beta$ est limite il n'y a rien \`a faire, on suppose donc $\beta=\gamma+1$. On consid\`ere donc un groupe $G$, extension avec noyau $N$ dans $\mathcal{C}'$ et quotient $Q$ dans $\mathscr{E}_{\gamma+1}(\mathcal{C})$. Si $Q$ est limite directe de groupes dans $\mathscr{E}_\gamma(\mathcal{C})$, alors $G$ est limite directe de leurs images r\'eciproques dans $G$, qui sont par r\'ecurrence dans $\mathcal{C'}$, et donc $G$ est dans $\mathcal{C}'$. Sinon, $Q$ est extension avec noyau $M$ dans $C_\gamma$ et quotient dans~$\mathcal{C}$. L'image r\'eciproque $M_1$ de $M$ dans $G$ est donc, par r\'ecurrence, dans $\mathcal{C}'$, et donc $G$, extension de noyau $M_1$ et noyau dans $\mathcal{C}$, l'est donc \'egalement.
\end{proof}

Le lemme peut \^etre utile pour montrer qu'un groupe n'appartient pas \`a la classe \'el\'ementaire engendr\'ee par $\mathcal{C}$. Le cas le plus \'evident (servant souvent sous sa forme contrapos\'ee) est le suivant:

\begin{fact}\label{factsim}
Supposons la classe $\mathcal{C}$ stable par passage aux sous-groupes et aux quotients. Si $G$ est un groupe simple de type fini dans $\mathscr{E}(\mathcal{C})$, alors $G$ est dans $\mathcal{C}$.
\end{fact}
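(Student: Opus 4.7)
Le plan est de raisonner par induction transfinie en utilisant exactement la description donn\'ee par le lemme \ref{lre}. Soit $G$ un groupe simple de type fini appartenant \`a $\mathscr{E}(\mathcal{C})=\bigcup_\alpha\mathscr{E}_\alpha(\mathcal{C})$, et soit $\alpha$ le plus petit ordinal tel que $G\in\mathscr{E}_\alpha(\mathcal{C})$. Je vais montrer que $\alpha=0$, ce qui donnera la conclusion.

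Si $\alpha$ est un ordinal limite, alors par d\'efinition $G\in\bigcup_{\beta<\alpha}\mathscr{E}_\beta(\mathcal{C})$, donc $G\in\mathscr{E}_\beta(\mathcal{C})$ pour un certain $\beta<\alpha$, ce qui contredit la minimalit\'e de $\alpha$. On peut donc supposer $\alpha=\gamma+1$, et par d\'efinition deux cas se pr\'esentent.

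Premier cas: $G$ est (isomorphe \`a) une extension de noyau $N\in\mathscr{E}_\gamma(\mathcal{C})$ et quotient $Q\in\mathcal{C}$. La simplicit\'e de $G$ impose $N=\{1\}$ ou $N=G$. Dans le premier sous-cas, $G\simeq Q\in\mathcal{C}$ et on a termin\'e. Dans le second sous-cas, $G\in\mathscr{E}_\gamma(\mathcal{C})$, contredisant la minimalit\'e de $\alpha$. Second cas: $G$ est limite directe (c'est-\`a-dire r\'eunion filtrante croissante) d'une famille $(H_i)$ de sous-groupes appartenant \`a $\mathscr{E}_\gamma(\mathcal{C})$. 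Ici intervient l'hypoth\`ese de type-finitude: en choisissant une partie g\'en\'eratrice finie $S$ de $G$, il existe par filtration un indice $i$ tel que $S\subset H_i$, d'o\`u $G=H_i\in\mathscr{E}_\gamma(\mathcal{C})$, \`a nouveau en contradiction avec la minimalit\'e de $\alpha$.

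Dans tous les cas possibles avec $\alpha>0$ on aboutit \`a une contradiction ou \`a $G\in\mathcal{C}$, donc finalement $G\in\mathcal{C}$. Il n'y a pas vraiment d'obstacle technique: les deux ingr\'edients (simplicit\'e pour \'eliminer les extensions non triviales, type-finitude pour \'eliminer les limites directes) s'ins\`erent de mani\`ere parfaitement compl\'ementaire dans la dichotomie fournie par le lemme \ref{lre}, et c'est pr\'ecis\'ement ce qui justifie le choix de la forme de la r\'ecurrence dans ce lemme (ou, plus exactement, le fait que le lemme a \'et\'e formul\'e en s\'eparant ainsi extensions et limites directes au lieu de consid\'erer des limites inductives filtrantes arbitraires).
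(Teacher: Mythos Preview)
La proposition est correcte et suit exactement l'approche du papier, qui se contente d'une phrase (\og $G$ n'a aucune d\'ecomposition non triviale en extension ou en limite directe, donc l'ordinal minimal est~$0$\fg). Tu as simplement d\'epli\'e en d\'etail l'analyse de cas sous-jacente, en explicitant notamment le r\^ole de la simplicit\'e pour les extensions et de la type-finitude pour les limites directes.
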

\begin{proof}
Par hypoth\`ese, $G$ n'a aucune d\'ecomposition non triviale en extension ou en limite directe. Cela impose que l'ordinal minimal $\alpha$ tel que $G\in\mathscr{E}_\alpha(\mathcal{C})$ est~0.
\end{proof}

\begin{exe}
Soit $\mathcal{A}$ la classe des groupes de type fini ab\'eliens et $\mathcal{F}$ la classe des groupes finis. La classe $\mathscr{E}(\mathcal{A}\cup\mathcal{F})$, introduite par Day \cite[p.~520]{Da57} et notamment \'etudi\'ee par Chou \cite{Ch80} est appel\'ee classe des groupes {\em \'el\'ementairement moyennables}. Soit $\mathcal{SE}$ la classe des groupes de type fini \`a croissance sous-expo\-nentielle. La classe $\mathscr{E}(\mathcal{SE})$ est appel\'ee classe des groupes {\em sous-exponentiellement moyennables}.
On a \mbox{$\mathcal{A}\cup\mathcal{F}\subset \mathcal{SE}$,} si bien que $\mathscr{E}(\mathcal{A}\cup\mathcal{F})\subset \mathscr{E}(\mathcal{SE})$.

La classe des groupes moyennables est stable par passage aux extensions et aux limites directes, et contient les groupes \`a croissance sous-exponentielle, si bien que ces classes sont constitu\'ees de groupes moyennables (ce qui est l'unique justification de la terminologie!).
\end{exe}

Une autre application du lemme \ref{lre} est la suivante.

\begin{fact}
Soit $G$ un groupe de type fini \'el\'ementairement moyennable et infini. Alors $G$ a virtuellement un morphisme surjectif sur $\Z$, au sens o\`u il a un sous-groupe d'indice fini ayant un morphisme surjectif vers $\Z$. De mani\`ere \'equivalente, il poss\`ede un quotient infini et virtuellement ab\'elien.
\end{fact}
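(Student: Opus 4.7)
Mon plan serait de proc\'eder par r\'ecurrence transfinie sur le plus petit ordinal $\alpha$ tel que $G\in\mathscr{E}_\alpha(\mathcal{A}\cup\mathcal{F})$, en utilisant la d\'ecomposition hi\'erarchique fournie par le lemme \ref{lre}. Le cas de base $\alpha=0$ est imm\'ediat: un groupe infini dans $\mathcal{A}\cup\mathcal{F}$ est n\'ecessairement ab\'elien de type fini et infini, donc admet $\Z$ comme quotient. Le cas d'un ordinal limite est trivial, puisqu'on a alors $G\in\mathscr{E}_\beta$ pour un certain $\beta<\alpha$, et il n'y a qu'\`a appliquer l'hypoth\`ese de r\'ecurrence.

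Pour le pas de r\'ecurrence, j'\'ecris $\alpha=\beta+1$ et je distingue les deux possibilit\'es offertes par le lemme \ref{lre}. Si $G$ est une limite directe filtrante d'injections de groupes $G_i\in\mathscr{E}_\beta$, alors $G$ \'etant de type fini, une famille g\'en\'eratrice finie est contenue dans un certain $G_i$ par filtrance, si bien que $G$ s'identifie \`a un sous-groupe de $G_i$; la stabilit\'e de $\mathscr{E}_\beta$ par passage aux sous-groupes (\'etablie dans la preuve du lemme \ref{lre}) permet alors de conclure par r\'ecurrence. Dans le second cas, $G$ est extension $1\to N\to G\to Q\to 1$ avec $N\in\mathscr{E}_\beta$ et $Q\in\mathcal{A}\cup\mathcal{F}$. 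Si $Q$ est infini, c'est un groupe ab\'elien de type fini infini, et la compos\'ee $G\twoheadrightarrow Q\twoheadrightarrow\Z$ conclut. Si $Q$ est fini, alors $N$ est d'indice fini dans $G$ donc de type fini (formule de Schreier), et infini puisque $G$ l'est; l'hypoth\`ese de r\'ecurrence fournit un sous-groupe $N_0\le N$ d'indice fini se surjetant sur $\Z$, et $N_0$ reste d'indice fini dans $G$.

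Pour l'\'equivalence avec la propri\'et\'e d'avoir un quotient infini virtuellement ab\'elien, je proc\'ederais ainsi. Partant d'un sous-groupe d'indice fini $H\le G$ et d'un morphisme surjectif $\pi:H\twoheadrightarrow\Z$, je remplace $H$ par son c\oe ur $H_0$ dans $G$ (encore d'indice fini et surjectant sur $\Z$, car l'image $\pi(H_0)$ est un sous-groupe d'indice fini de $\Z$ donc non trivial). Puis l'intersection $K$ des $G$-conjugu\'es de $\ker(\pi|_{H_0})$ est distingu\'ee dans $G$ et contenue dans $H_0$; comme les conjugaisons par $G$ sur les sous-groupes de $H_0$ factorisent par $G/H_0$ fini, cette intersection est finie, donc $H_0/K$ se plonge dans un produit fini de copies de $\Z$ et se surjecte sur la premi\`ere, \'etant donc un groupe ab\'elien de type fini infini. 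Ainsi $G/K$ est une extension de $G/H_0$ (fini) par $H_0/K$, infinie et virtuellement ab\'elienne. R\'eciproquement, un quotient infini virtuellement ab\'elien $G/K$ contient un sous-groupe ab\'elien d'indice fini (qui est alors ab\'elien de type fini et infini), lequel se rel\`eve en un sous-groupe d'indice fini de $G$ se surjetant sur $\Z$.

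La principale subtilit\'e technique sera dans le sous-cas des extensions \`a quotient fini: il faut s'assurer que $N$ est de type fini pour pouvoir y appliquer l'hypoth\`ese de r\'ecurrence, ce qui rel\`eve du fait classique, \`a invoquer explicitement, que tout sous-groupe d'indice fini d'un groupe de type fini est lui-m\^eme de type fini. C'est aussi le point o\`u il est crucial que la r\'ecurrence porte sur des groupes de type fini, car la classe $\mathcal{A}\cup\mathcal{F}$ et l'argument du pas de r\'ecurrence exigent tous deux cette hypoth\`ese.
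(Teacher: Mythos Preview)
Your argument is correct and follows essentially the same route as the paper: transfinite induction on the minimal $\alpha$ with $G\in\mathscr{E}_\alpha(\mathcal{A}\cup\mathcal{F})$, using the structure given by Lemma~\ref{lre}, then the separate verification of the equivalence via passing to a normal core and intersecting conjugates of a kernel. The paper phrases the induction as a minimal-counterexample argument and dispatches the direct-limit case in one sentence (``comme $G$ est de type fini, il est donc extension\dots''), whereas you spell it out explicitly; likewise you invoke the Schreier-type fact that finite-index subgroups of finitely generated groups are finitely generated, which the paper leaves implicit --- but the substance is the same.
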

\begin{proof}
Montrons plus g\'en\'eralement que si $\mathcal{C}$ est une classe de groupes stable par passage aux sous-groupes et quotients telle que tout groupe infini dans $\mathcal{C}$ a virtuellement un morphisme surjectif sur $\Z$, alors il en est de m\^eme pour tout $G$ infini de type fini dans  $\mathscr{E}(\mathcal{C})$.

Supposons le contraire. Soit $\alpha$ minimal tel que $G\in\mathscr{E}_\alpha(\mathcal{C})$. Alors $\alpha$ n'est pas un ordinal limite et par hypoth\`ese, $\alpha\neq 0$, donc $\alpha=\beta+1$ est un successeur. Comme $G$ est de type fini, il est donc extension \`a noyau $N$ dans $\mathscr{E}_\beta(\mathcal{C})$ et quotient $Q$ dans $\mathcal{C}$. Si $Q$ est infini, alors il a virtuellement un morphisme surjectif sur $\Z$, et donc $G$ aussi. Si $Q$ est fini, alors $N$ est d'indice fini, et par minimalit\'e de $\alpha$ a virtuellement un morphisme surjectif sur $\Z$, et donc $G$ aussi.

Pour la reformulation, il est clair que l'existence d'un quotient virtuellement ab\'elien implique l'existence d'un sous-groupe d'indice fini avec un ab\'elianis\'e infini; la r\'eciproque est vraie: si $G$ admet un sous-groupe d'indice fini $H$ d'ab\'elianis\'e infini, alors si $N$ est l'intersection des conjugu\'es de $[H,H]$ dans $G$ alors $G/N$ est virtuellement ab\'elien et infini.
\end{proof}

\section{Ajout (f\'evrier 2020)}\label{ajout} 

\vspace{-.5cm}
{(\it En dehors des ajouts dat\'es de f\'evrier 2020 li\'es \`a cet 
appendice, la version co\"incide avec celle envoy\'ee \`a 
l'\'editeur en mars 2013.)}

\smallskip

St\v ep\' anek et Rubin (1989) \cite{SR} ont introduit de faÃ§on 
g\'en\'erale le groupe plein-topologique d'un sous-groupe du groupe des 
autohomÃ©omorphismes d'un espace compact totalement s\'epar\'e. Ils se placent dans le cadre 
du groupe des automorphismes d'une alg\`ebre bool\'eenne, mais utilisent 
explicitement la dualit\'e de Stone. Ils d\'emontrent \cite[Lemma 
5.17]{SR}: {\it Soit $X$ un espace compact totalement s\'epar\'e. Soit $\Gamma$ un 
sous-groupe libre non trivial de $\mathrm{Homeo}(X)$ agissant 
topologiquement librement sur $X$. Alors $[\![\Gamma]\!]$ (not\'e 
$\mathrm{PW}(\Gamma)$ ``elements piecewise in $\Gamma$'') n'est pas 
simple.} Plus pr\'ecis\'ement, ils construisent un morphisme non nul 
$[\![\Gamma]\!]\to\Z$. En fait la preuve fonctionne en supposant 
seulement qu'il existe $x_0\in X$ de stabilisateur trivial, et qu'il 
existe une partie $M$ de $\Gamma$ telle que pour tout $g\in\Gamma$ on 
ait $gM\bigtriangleup M$ fini et que de plus le morphisme $\Gamma\to\Z$, 
$g\mapsto |gM\smallsetminus M|-|M\smallsetminus gM|$ est non nul. En 
effet, ce morphisme se prolonge en un morphisme $[\![\Gamma]\!]\to\Z$ 
donn\'e par $f\mapsto |fM'\smallsetminus M'|-|M'\smallsetminus fM'|$ 
avec $M'=Mx_0$. La construction au d\'ebut du \S\ref{abelianise} est en fait un cas 
particulier de cette situation.


\begin{thebibliography}{666}


%%%%%%%%%%%%%%%%%%%%%%%%%%%%%%%%%%%%%%%%%%%%%%
%%%%%%%%%%%%%%%%%%%%%%%%%%%%%%%%%%%%%%%%%%%%%%
%%%%%%%%%%%%%%%%%%%%%%%%%%%%%%%%%%%%%%%%%%%%%%
%%%%%%%%%%%%%%%%%%%%%%%%%%%%%%%%%%%%%%%%%%%%%%
%%%%%%%%%%%%%%%%%%%%%%%%%%%%%%%%%%%%%%%%%%%%%%
%%%%%%%%%%%%%%%%%%%%%%%%%%%%%%%%%%%%%%%%%%%%%%
%%%%%%%%%%%%%%%%%%%%%%%%%%%%%%%%%%%%%%%%%%%%%%
% de faon volontaire, les annes sont mentionnes dans le tag [] 
% pour les articles mais pas pour les livres.
%%%%%%%%%%%%%%%%%%%%%%%%%%%%%%%%%%%%%%%%%%%%%%
%%%%%%%%%%%%%%%%%%%%%%%%%%%%%%%%%%%%%%%%%%%%%%
%%%%%%%%%%%%%%%%%%%%%%%%%%%%%%%%%%%%%%%%%%%%%%
%%%%%%%%%%%%%%%%%%%%%%%%%%%%%%%%%%%%%%%%%%%%%%
%%%%%%%%%%%%%%%%%%%%%%%%%%%%%%%%%%%%%%%%%%%%%%

\bibitem[Ab06]{Abert} M. Abert. Representing graphs by the non-commuting relation, Publ. Math. Debrecen 69/3 (2006), 261--269.

\bibitem[BeM08]{BeM} S. Bezuglyi, K. Medynets. Full groups, flip conjugacy, and orbit equivalence of Cantor minimal systems. Colloq. Math. 110(2) (2008), 409--429.

\bibitem[BKN10]{BKN} L. Bartholdi, V. Kaimanovich, V. Nekrashevych.
On amenability of automata groups.
Duke Math. J. 154(3) (2010), 575--598.

\bibitem[Bo83]{Bo} M. Boyle. Topological orbit equivalence and factor maps in 
symbolic dynamics. Ph.D. thesis, Univ. of Washington,  1983.

\bibitem[BoTo98]{BoTo} M. Boyle, J. Tomiyama. Bounded topological orbit equivalence and $C^*$-algebras, J. Math. Soc. Japan 50(2) (1998), 317--329.

\bibitem[Bou]{Bou} N. Bourbaki. Topologie g\'en\'erale, chapitres 1 \`a 4. Hermann, Paris, 1971.\break
R\'eimpression Springer, 2007.

\bibitem[Bri09]{Bri} J. Brieussel.
Amenability and non-uniform growth of some directed automorphism groups of a rooted tree.
Math. Z. 263(2) (2009), 265--293.

\bibitem[Bro87]{Bro} K. Brown. Finiteness properties of groups. In: Proceedings of the Northwestern Conference on Cohomology of Groups, Evanston, IL, 1985, J. Pure Appl. Algebra 44 (1987), 45--75.

\bibitem[BV05]{BV} L. Bartholdi, B. Vir\'ag. Amenability via random walks. Duke Math. J. 130(1) (2005), 39--56.

\bibitem[Ch80]{Ch80} C. Chou. Elementary amenable groups. Illinois J. Math. 24(3) (1980), 396--407.

\bibitem[Da57]{Da57} M. M. Day. Amenable semigroups.
Illinois J. Math. Volume 1, Issue 4 (1957), 509--544.

\bibitem[DFG11]{DFG} F. Dahmani, K. Fujiwara, V. Guirardel. Free groups of interval exchange transformations are rare. ArXiv math/1101.5909v2 (2011), \`a para\^\i tre \`a Groups Geom. Dyn.

\bibitem[Do90]{VD} E. van Douwen, Measures invariant under actions of $F_2$. Topology Appl. 34(1) (1990), 53--68.

\bibitem[EM12]{EM} G. Elek, N. Monod. On the topological full group of minimal Cantor $\Z^2$-systems. ArXiv math/1201.0257v2 (2012); \`a para\^\i tre \`a Proc. Amer. Math. Soc.

\bibitem[Ers06]{Ers} A. Erschler.
Piecewise automatic groups.
Duke Math. J. 134(3) (2006), 591--613. 

\bibitem[Eym]{Eym} P. Eymard. Initiation \`a la th\'eorie des groupes moyennables. Pages 89--107 dans: Analyse harmonique sur les groupes de Lie. Lecture Notes in Math. 497, Springer (1975).

\bibitem[GlW95]{GW} E. Glasner, B. Weiss. Weak orbit equivalence of Cantor minimal systems, Internat. J. Math. 6(4) (1995), 559--579.

\bibitem[GPS95]{GPS95} T. Giordano, I. F. Putnam, C. F. Skau. Topological orbit equivalence and $C^*$-crossed products, J. Reine Angew. Math. 469 (1995), 51--111.

\bibitem[GPS99]{GPS} T. Giordano, I. F. Putnam, C. F. Skau. Full groups of Cantor minimal systems.
Israel J. Math. 111 (1999), 285--320.

\bibitem[Gre]{Gre} F. Greenleaf. Invariant means on topological groups and their applications. New York, Van Nostrand, 1969.

\bibitem[Gri84]{Gri} R. Grigorchuk. Degrees of growth of finitely generated groups, and the theory of invariant means. Izvestiya: Mathematics, 25(2):259--300, 1985.
Russian original: Izv. Akad. Nauk SSSR Ser. Mat. 48(5) (1984), 939--985.

\bibitem[GrMe12]{GrMe} R. Grigorchuk, K. Medynets. On algebraic properties of topological full groups. ArXiv math/1105.0719v4 (2012).

\bibitem[GrZu02]{GZ} R. Grigorchuk, A. Zuk. On a torsion-free weakly branch group defined by a three state automaton. Int. J. Algebra Comput. 12  (2002), 223--246.

\bibitem[GTY10]{GTY} E. Guentner, R. Tessera, G. Yu. A notion of geometric complexity and its applications to topological rigidity. ArXiv math/1008.0884v1 (2010), \`a para\^\i tre \`a Inv. Math.

\bibitem[HS86]{HS86} P. de la Harpe, G. Skandalis. Un r\'esultat de Tarski sur les actions moyennables de groupes et les partitions paradoxales. Enseign. Math., II. S\'er. 32 (1986), 121--138.

\bibitem[Hou78]{Hou} C. Houghton, The first cohomology of a group with permutation module coefficients, Arch. Math. (Basel) 31(3) (1978), 254--258.

\bibitem[JM12]{JM} K. Juschenko, N. Monod. Cantor systems, piecewise translations and simple amenable groups. ArXiv math/1204.2132v3, \`a para\^\i tre \`a Ann. of Math.

\bibitem[Kai05]{Kai} V. Kaimanovich.
``M\"unchhausen trick'' and amenability of self-similar groups.
Internat. J. Algebra Comput. 15 (2005), 907--937. 

\bibitem[Kea75]{Kea} M. Keane. Interval exchange transformations. Math. Z. 141(1) (1975), 25--31.

\bibitem[KH]{KH} A. Katok, B. Hasselblatt. Introduction to the modern theory of dynamical systems.
Encyclopedia of Mathematics and its Applications 54, Cambridge University Press, 1995.

\bibitem[Kri80]{Kri} W. Krieger. On a dimension for a class of homeomorphism groups. Math. Ann. 252 (1980), 87--95.

\bibitem[Luk]{Luk} J. Lukes. Integral representation theory: applications to convexity, Banach spaces and potential theory. Walter de Gruyter, 2010.

\bibitem[Ma06]{Ma1} H. Matui. Some remarks on topological full groups of Cantor minimal systems. Internat. J. Math. 17(2) 231--251, 2006.

\bibitem[Ma12]{Ma2} H. Matui. Some remarks on topological full groups of Cantor minimal systems II. ArXiv math/1111.3134v1, \`a para\^\i tre \`a Ergodic Th. Dynam. Systems.

\bibitem[MH38]{MH} M. Morse, G. Hedlund. Symbolic Dynamics. Amer.\ J.\ Math.\ 60(4) (1938), 815--866.

\bibitem[Neu37]{Neu} B. H. Neumann. Some remarks on infinite groups. Proc. London Math. Soc. (2), 12 (1937), 120--127.

\bibitem[Nov09]{Nov} C. F. Novak. Discontinuity-growth of interval-exchange maps. J. Mod. Dyn. 3(3) (2009), 379--405.

\bibitem[Pat]{Pat} A. Paterson. Amenability. Math. Surveys Monogr. Amer. Math. Soc., 1988.

\bibitem[Pu89]{Pu} I. F. Putnam. The $C^*$-algebras associated with minimal homeomorphisms of the Cantor set. Pacific J. Math. Volume 136, Number 2 (1989), 329--353.

\bibitem[Ros74]{Ros} J. Rosenblatt. Invariant measures and growth conditions. Trans. Amer. Math. Soc. 193 (1974), 33--53 .

\bibitem[Sab74]{Sab} G. Sabbagh. Caract\'erisation alg\'ebrique des groupes de type fini ayant un probl\`eme du mot r\'esoluble (th\'eor\`eme de Boone-Higman, travaux de B.~H. Neumann et MacIntyre). S\'eminaire N. Bourbaki 1974-1975, exp. n\textordmasculine 457, Lecture Notes in Math.
514 (1976), 61--80.

\bibitem[Sag97]{Sag} M. Sageev. Codimension-1 subgroups and splittings of groups, J. of Algebra 189 (1997), 377--389.

\bibitem[Sch27]{Sch} O. Schreier. Die Untergruppen der freien Gruppen. Abhandlungen Math. Hamburg 5 (1927), 161--183.

\bibitem[Sk97]{Sk} C. F. Skau. Orbit structure of topological dynamical systems and its invariants, Operator Algebras and Quantum Field Theory, Rome (1996) (International Press, Cambridge, MA, 1997),  533--544.

\bibitem[SR]{SR} P. St\v ep\' anek, M.  Rubin. Homogeneous Boolean algebras. Handbook of Boolean algebras, Vol. 2, 679--715, North-Holland, Amsterdam, 1989. {\it (RÃ©fÃ©rence ajoutÃ©e, fÃ©vrier 2020)}

\bibitem[SS]{SS} L. Steen, J. Seebach. Counterexamples in topology. Dover Publications, New York, 1978.


\bibitem[St84]{St2} A. St\"epin.
A remark on the approximability of groups. (En russe) 
Vestnik Moskov. Univ. Ser. I Mat. Mekh. 1984, 85--87. 
   Traduction anglaise : Moscow Univ. Math. Bull. 39 (1984), 90--93.

\bibitem[Ve81]{Ve1} A. Vershik. Uniform algebraic approximations of shift and multiplication operators.
Dokl. Akad. Nauk SSSR 259(3), 526--529 (1981).  Traduction anglaise :
Sov. Math. Dokl. 24 (1981), 97--100.

\bibitem[Ve82]{Ve2} A. Vershik. A theorem on periodical Markov approximation in ergodic theory.
Zapiski Nauchn. Semin. LOMI 115 (1982), 72--82 . Traduction anglaise : J.
Sov. Math. 28 (1985), 667--674. Autre traduction anglaise dans ``Ergodic
theory and related topics'' (Vitte, 1981). Math. Res. 12 (1982), 195--206.
Akademie-Verlag, Berlin.
\end{thebibliography}
\end{document}